\tikzset{
	rline/.style ={color = red, line width =1pt}
}
\tikzset{
	bline/.style ={color = blue, line width =1pt}
}
\tikzset{
	gline/.style ={color = green, line width =1pt}
}
\newtheorem{thm}{Theorem}[section]
\newtheorem{cor}[thm]{Corollary}
\newtheorem{lem}[thm]{Lemma}
\newtheorem{prop}[thm]{Proposition}
\theoremstyle{definition}
\newtheorem{defn}[thm]{Definition}
\theoremstyle{remark}
\newtheorem{rem}[thm]{Remark}
\theoremstyle{conclusion}
\theoremstyle{conjecture}
\numberwithin{equation}{section}
\newcommand{\be}{\begin{equation}}
\newcommand{\ee}{\end{equation}}
\begin{document}
\title[Finsler $N$-Laplacian Liouville equation in cones]{Anisotropic Finsler $N$-Laplacian Liouville equation in convex cones}

\author{Wei Dai, Changfeng Gui, YunPeng Luo}

\address{School of Mathematical Sciences, Beihang University (BUAA), Beijing 100191, P. R. China, and Key Laboratory of Mathematics, Informatics and Behavioral Semantics, Ministry of Education, Beijing 100191, P. R. China}
\email{weidai@buaa.edu.cn}

\address{Department of Mathematics, University of Macau, Macau SAR, P. R. China}
\email{changfenggui@um.edu.mo}

\address{School of Mathematical Sciences, Beihang University (BUAA), Beijing 100191, P. R. China, and School of Mathematics and Statistics, Central China Normal University, Wuhan, 430079, P. R. China}
\email{yunpengluo@buaa.edu.cn}

\thanks{Research of the first and third author are supported by the NNSF of China (No. 12222102), the National Science and Technology Major Project (2022ZD0116401) and the Fundamental Research Funds for the Central Universities. Research of the second author is supported by University of Macau research grants  CPG2024-00016-FST, CPG2025-00032-FST, SRG2023-00011-FST, MYRG-GRG2023-00139-FST-UMDF, UMDF Professorial Fellowship of Mathematics, Macao SAR FDCT 0003/2023/RIA1 and Macao SAR FDCT 0024/2023/RIB1.}

\begin{abstract}
We consider the anisotropic Finsler $N$-Laplacian Liouville equation
\[-\Delta ^{H}_{N}u=e^u \qquad {\rm{in}}\,\, \mathcal{C},\]
where $N\geq2$, $\mathcal{C}\subseteq\mathbb{R}^{N}$ is an open convex cone including $\mathbb{R}^{N}$, the half space $\mathbb{R}^{N}_{+}$ and $\frac{1}{2^{m}}$-space $\mathbb{R}^{N}_{2^{-m}}:=\{x\in\mathbb{R}^{N}\mid x_{1},\cdots,x_{m}>0\}$ ($m=1,\cdots,N$), and the anisotropic Finsler $N$-Laplacian $\Delta ^{H}_{N}$ is induced by a positively homogeneous function $H(x)$ of degree $1$. All solutions to the Finsler $N$-Laplacian Liouville equation with finite mass are completely classified. In particular, if $H(\xi)=|\xi|$, then the Finsler $N$-Laplacian $\Delta ^{H}_{N}$ reduces to the regular $N$-Laplacian $\Delta_N$. Our result is a counterpart in the limiting case $p=N$ of the classification results in \cite{CFR} for the critical anisotropic $p$-Laplacian equations with $1<p<N$ in convex cones, and also extends the classification results in \cite{CK,CL,CW,CL2,E} for Liouville equation in the whole space $\mathbb{R}^{N}$ to general convex cones. In our proof, besides exploiting the anisotropic isoperimetric inequality inside convex cones, we have also proved and applied the radial Poincar\'{e} type inequality (Lemma \ref{A1}), which are key ingredients in the proof and of their own importance and interests.
\end{abstract}
\maketitle {\small {\bf Keywords:} Classification of solutions; convex cones; anisotropic Finsler $N$-Laplacian; quasi-linear Liouville equation; anisotropic isoperimetric inequality inside convex cones; radial Poincar\'{e} type inequality. \\

{\bf 2010 MSC} Primary: 35J92; Secondary: 35B06, 35B40.}

\section{Introduction}

\subsection{Background and setting of the problem}
In this paper, we are mainly concerned with the following anisotropic Finsler $N$-Laplacian Liouville equation in convex cones with finite mass:
    \begin{equation}\label{eq:1.1}
      \left\{
          \begin{aligned}
          &-\Delta ^{H}_{N}u=e^u \quad\,\,\, &{\rm{in}} \,\, \mathcal{C}, \\
          &a(\nabla u)\cdot \nu =0 \quad\,\,\, &{\rm{on}} \,\, \partial\mathcal{C},\\
          &\int_{\mathcal{C}}e^u\mathrm{d}x<+\infty,
          \end{aligned}
          \right.
    \end{equation}
where $N\geq2$, $\mathcal{C}\subseteq\mathbb{R}^{N}$ is an open convex cone including $\mathbb{R}^{N}$, the half space $\mathbb{R}^{N}_{+}$ and $\frac{1}{2^{m}}$-space $\mathbb{R}^{N}_{2^{-m}}:=\{x\in\mathbb{R}^{N}\mid x_{1},\cdots,x_{m}>0\}$ ($m=1,\cdots,N$). From now on, without loss of generality, up to rotations and translations, we can write $\mathcal{C}=\mathbb{R}^{k}\times\mathcal{\widetilde{C}}$ with $k\in\{0,\cdots ,N\}$ and $\mathcal{\widetilde{C}}\subset\mathbb{R}^{N-k}$ is an open convex cone with vertex at the origin $0_{\mathbb{R}^{N-k}}$ which contains no lines. In particular, $\mathcal{C}=\mathbb{R}^{N}$ if $k=N$, $\mathcal{C}=\mathbb{R}^{N}_{+}$ if $k=N-1$ and $\mathcal{\widetilde{C}}=\mathbb{R}_{+}$, and $\mathcal{C}=\mathbb{R}^{N}_{2^{-m}}$ if $k=N-m$ and $\mathcal{\widetilde{C}}=\mathbb{R}^{m}_{2^{-m}}:=\{x\in\mathbb{R}^{m}\mid x_{1},\cdots,x_{m}>0\}$. The function $H:\mathbb{R}^N\rightarrow\mathbb{R}$ with $H\in C^2_{loc}(\mathbb{R}^N\setminus \{0\})$ is a gauge, i.e., a nonnegative, positively homogenous of degree $1$, convex function satisfying $H(\xi)>0$ on $\mathbb{S}^{N-1}:=\{\xi\in\mathbb{R}^{N}\mid\,|\xi|=1\}$. One can easily verify that
\begin{equation}\label{norm}
  c_{H}|\xi|\leq H(\xi)\leq C_{H}|\xi|, \qquad \forall \,\, \xi\in\mathbb{R}^{N},
\end{equation}
where $C_{H}:=\max\limits_{\xi\in \mathbb{S}^{N-1}}H(\xi)\geq c_{H}:=\min\limits_{\xi\in \mathbb{S}^{N-1}}H(\xi)>0$.

\medskip

Comparing with the regular Euclidean space $(\mathbb{R}^{N},|\cdot|)$, the anisotropic Finsler manifold $(\mathbb{R}^{N},H(\cdot))$ may lose the usual symmetry properties and the directional independence. The anisotropic Finsler $N$-Laplacian $\Delta ^{H}_{N}$ induced by $H$ is defined by
\begin{equation}\label{NL}
  \Delta ^{H}_{N}u:=\operatorname{div}\left(a(\nabla u)\right)=\operatorname{div}\left(H^{N-1}(\nabla u)\nabla H(\nabla u)\right)
\end{equation}
with
\begin{equation}\label{a}
  a(\xi):=H^{N-1}(\xi)\nabla H(\xi), \qquad \forall \,\, \xi\in\mathbb{R}^{N}.
\end{equation}
When $H(\cdot)$ equals to the Euclidean norm $|\cdot|$, the Finsler $N$-Laplacian $\Delta ^{|\cdot|}_{N}=\Delta_{N}$, where the regular $N$-Laplacian $\Delta_{N}$ is given by $\Delta_{N}u:=\operatorname{div}\left(|\nabla u|^{N-2}\nabla u\right)$ which arises as the first variation of the functional $\int|\nabla u|^N\mathrm{d}x$. Thus it is natural for us to consider the anisotropic Finsler $N$-Laplacian $\Delta ^{H}_{N}$ appearing in the Euler-Lagrange equations for Wulff-type functionals $\int H(\nabla u)^N\mathrm{d}x$. For literatures on anisotropic problems and their extensive applications, refer to e.g. \cite{ACF,BFK,BC,CRS,CFR,CL1,CL2,CFV,DP,DPV,FF,FM,Taylor,WX1,WX2,Wulff,XG,ZZ} and the references therein.

\medskip

We say $u$ is a solution of the anisotropic Finsler $N$-Laplacian Liouville equation \eqref{eq:1.1} with a finite mass  if $u$ is a function satisfying $u\in W^{1,N}_{loc}(\overline{\mathcal{C}})$ such that $\int_{\mathcal{C}}e^u\mathrm{d}x<+\infty$ and
$$ \int_{\mathcal{C}} H^{N-1}(\nabla u)\langle \nabla H(\nabla u), \nabla\psi\rangle \mathrm{d}x = \int_{\mathcal{C}}e^{u(x)}\psi(x)\mathrm{d}x$$
for all $\psi\in W^{1,N}_{0}(\Omega)\cap L^{\infty}(\Omega)$ with $\Omega\subset\mathbb{R}^{N}$ bounded.

\medskip

We will use the notation $H_0$ to denote the dual norm associated to $H$ defined by
\begin{equation}\label{def:2.1}
    H_0(x):=\sup\limits_{H(\xi)=1}\left\langle x,\xi\right\rangle, \qquad \forall \,\, x\in\mathbb{R}^N.
\end{equation}
In addition, we set
\begin{equation}\label{def:2.2}
    \widehat{H}_0(x):=H_0(-x), \qquad \forall \,\, x\in\mathbb{R}^N.
\end{equation}
Definitions \eqref{def:2.1} and \eqref{def:2.2} imply that both $H_0$ and $\widehat{H}_0$ are positively homogenous of degree $1$, i.e., $H_0(\lambda x)=\lambda H_0(x)$ and $\widehat{H}_0(\lambda x)=\lambda\widehat{H}_0(x)$ for any $\lambda>0$. Moreover, it follows from \eqref{norm} that, for any $x\neq0$,
\begin{equation*}
        H_0(x)=\sup\limits_{\xi\neq 0}\frac{\left\langle x,\xi\right\rangle }{H(\xi)}\geq \frac{\left\langle x,x\right\rangle }{H(x)}\geq \frac{1}{C_{H}}|x|,
\end{equation*}
consequently, we can deduce further that
\begin{equation}\label{norm0}
        \frac{1}{C_{H}}|x|\leq H_0(x), \, \widehat{H}_0(x)\leq |x|\sup\limits_{H(\xi)=1}|\xi|\leq \frac{1}{c_{H}}|x|, \qquad \forall \,\, x\in\mathbb{R}^{N}.
\end{equation}

\medskip

In \cite{CFR}, Ciraolo, Figalli and Roncoroni considered the following critical anisotropic $p$-Laplacian equations in convex cones $\mathcal{C}=\mathbb{R}^{k}\times \mathcal{\widetilde{C}}$:
\begin{equation}\label{eq:1.1p}
      \left\{
          \begin{aligned}
          &-\Delta ^{H}_{p}u=u^{p^{\ast}-1} \quad\,\,\, &{\rm{in}} \,\, \mathcal{C}, \\
          & u>0 \quad\,\,\, &{\rm{in}} \,\, \mathcal{C},\\
          &a(\nabla u)\cdot \nu =0 \quad\,\,\, &{\rm{on}} \,\, \partial\mathcal{C},\\
          &u\in D^{1,p}(\mathcal{C}),
          \end{aligned}
          \right.
    \end{equation}
where $1<p<N$, $\mathcal{\widetilde{C}}\subset\mathbb{R}^{N-k}$ is an open convex cone with vertex at the origin $0_{\mathbb{R}^{N-k}}$ which contains no lines, $\Delta ^{H}_{p}u:=\operatorname{div}\left(H^{p-1}(\nabla u)\nabla H(\nabla u)\right)$, $p^{\ast}:=\frac{Np}{N-p}$ and $D^{1,p}(\mathcal{C}):=\{u\in L^{p^{\ast}}(\mathcal{C})\mid \,\, \int_{\mathcal{C}}|\nabla u|^{p}\mathrm{d}x<+\infty\}$. Under the assumption on the gauge $H\in C_{loc}^2(\mathbb{R}^N\setminus \{0\})$ that
\begin{equation}\label{A}
  H^{2}\in C_{loc}^{1,1}(\mathbb{R}^{N}) \,\,\, \text{and it is uniformly convex},
\end{equation}
they proved that $u$ must be uniquely determined by $U^{H}$ up to translations and scalings, i.e.,
\begin{equation}
u(x)=U_{\lambda,x_0}^H(x):=\lambda^{\frac{N-p}{p}}U^{H}(\lambda(x-x_{0}))
\end{equation}
for some $\lambda>0$ and $x_0\in\overline{\mathcal{C}}$, where
\begin{equation}
  U^{H}(x):=\left(\frac{N^{\frac{1}{p}}\left(\frac{N-p}{p-1}\right)^{\frac{p-1}{p}}}{1+\widehat{H}_0\left(x\right)^{\frac{p}{p-1}}}\right)^{\frac{N-p}{p}}
\end{equation}
Moreover, \\
  $(\rm{i})$ if $k=N$, then $\mathcal{C}=\mathbb{R}^N$ and $x_0$ may be a generic point in $\mathbb{R}^N$;\\
  $(\rm{ii})$ if  $k\in\{1,\cdots ,N-1\}$, then $x_0\in\mathbb{R}^k\times\{0_{\mathbb{R}^{N-k}}\}$;\\
  $(\rm{iii})$ if $k=0$, then $x_0=0$.\\
In this paper, we will prove the classification result for the anisotropic Finsler $N$-Laplacian Liouville equation \eqref{eq:1.1} in convex cones with finite masses, which is a counterpart in the limiting case $p=N$ of the classification results in \cite{CFR} for the critical anisotropic $p$-Laplacian equations with $1<p<N$ in convex cones. For classification and other related results on $p$-Laplacian equations with $1<p<N$, c.f. \cite{DLL,DFSV,DMMS,D,S,S2,BS16,T,VJ16} and the references therein.

\medskip

Now we take the limiting case $p=N\geq2$ into account. When $p=N=2$, by using the method of moving planes, Chen and Li \cite{CL} classified all the $C^{2}$-smooth solutions with a finite total curvature of the following $2$-D Liouville equation:
\begin{equation}\label{0-1}\begin{cases}
-\Delta u(x)=e^{u(x)},  \qquad  x\in\mathbb{R}^{2}, \\
\int_{\mathbb{R}^{2}}e^{u(x)}\mathrm{d}x<+\infty.
\end{cases}\end{equation}
They proved that there exists some point $x_{0}\in\mathbb{R}^{2}$ and some $\lambda>0$ such that
$$u(x)=\ln\left[\frac{2\lambda}{1+\lambda^{2}|x-x_{0}|^{2}}\right].$$
The Liouville equation \eqref{0-1} was initially investigated by Liouville \cite{Liou}, which arises from a variety of situations, such as from prescribing Gaussian curvature in geometry and from combustion theory in physics. In \cite{CK,CW}, the authors reproved the classification results in \cite{CL} for $2$-D Liouville equation \eqref{0-1} via different approaches. For sphere covering inequality and its applications on Liouville type equations, please refer to \cite{GM} and see also \cite{BGJM,GHM,GJM,GL}. For classification of solutions to semi-linear equations on Heisenberg group or CR manifolds via Jerison-Lee identities and invariant tensor techniques, see \cite{MO,MOW} and the references therein. For classification and other related results on Liouville type systems or (higher order) Liouville type equations, c.f. \cite{BF,C,CY,CK,DQ2,Lin,WX} and the references therein.

\medskip

In higher dimensions $N\geq3$, the method of moving planes does not work for the $N$-Laplacian Liouville equations. By exploiting the isoperimetric inequality and Pohozaev identity, Esposito \cite{E} classified all solutions with finite mass to the following $N$-D Liouville equation:
\begin{equation}\label{0-1N}\begin{cases}
-\Delta_{N} u(x)=e^{u(x)},  \qquad  x\in\mathbb{R}^{N}, \\
\int_{\mathbb{R}^{N}}e^{u(x)}\mathrm{d}x<+\infty.
\end{cases}\end{equation}
He proved that there exists some point $x_{0}\in\mathbb{R}^{N}$ and some $\lambda>0$ such that
$$u(x)=\ln\left[\frac{c_{N}\lambda^{N}}{\left(1+\lambda^{\frac{N}{N-1}}|x-x_{0}|^{\frac{N}{N-1}}\right)^{N}}\right].$$
Subsequently, Ciraolo and Li \cite{CL2} derived classification results for the anisotropic $N$-Liouville equation in the whole space $\mathbb{R}^{N}$ and hence gave an affirmative answer to a conjecture made in \cite{WX2}. In this present paper, we will also extend the classification results in \cite{CK,CL,CW,CL2,E} for $N$-D quasi-linear Liouville equation in the whole space $\mathbb{R}^{N}$ to general convex cones $\mathcal{C}$ including $\mathbb{R}^{N}$, the half space $\mathbb{R}^{N}_{+}$ and $\frac{1}{2^{m}}$-space $\mathbb{R}^{N}_{2^{-m}}:=\{x\in\mathbb{R}^{N}\mid x_{1},\cdots,x_{m}>0\}$ ($m=1,\cdots,N$). For optimal Liouville type theorems on general bounded or unbounded domains (including all cone-like domains) and blowing-up analysis on not necessarily $C^1$-smooth domains, see \cite{DQ}.

\subsection{Main results}
In this paper, we shall first prove and apply the radial Poincar\'{e} type inequality (Lemma \ref{A1}) and the logarithmic asymptotic estimate for quasi-linear problem on convex cone $\mathcal{C}$ (Lemma \ref{le:1+}). Then, we exploit the anisotropic isoperimetric inequality inside convex cones (c.f. \cite{CRS}, see also \cite{CL1,DPV,LP}) and  approximation method in cones (c.f. \cite{CRS,CFR}, see also \cite{CL1}), and  the sharp asymptotic estimates on $u$ and $\nabla u$ at infinity in cones as well as  Pohozaev type identities.  All together allow us to  extend the classification results in \cite{CFR} from $1<p<N$ to the limiting case $p=N$ and derive the following complete classification theorem for the anisotropic Finsler $N$-Laplacian Liouville equation \eqref{eq:1.1} in convex cones.
\begin{thm}\label{Th:1.1}
Let $N\geq2$ and $\mathcal{C}$ be a convex cone in $\mathbb{R}^{N}$ and write $\mathcal{C}=\mathbb{R}^{k}\times\mathcal{\widetilde{C}}$, where $k\in\{0,\cdots ,N\}$ and $\mathcal{\widetilde{C}}\subset\mathbb{R}^{N-k}$ is an open convex cone with vertex at the origin $0_{\mathbb{R}^{N-k}}$ which does not contain a line. Assume that the gauge $H$ is uniformly elliptic, i.e., $B^{H}_{1}(0):=\{\xi\in\mathbb{R}^{N}\mid \, H(\xi)<1\}$ is uniformly convex. Let $u$ be a solution of \eqref{eq:1.1}. Then $u$ must be uniquely determined by $U^{H}$ up to translations and scalings, i.e.,
    \begin{equation}\label{eq:1.2}
      u(x)=U^{H}_{\lambda,x_{0}}(x):=U^{H}(\lambda(x-x_{0}))+N\ln\lambda
    \end{equation}
for some $\lambda>0$ and $x_0\in\overline{\mathcal{C}}$, where
    \begin{equation}
    U^{H}(x):=\ln\Bigg[\frac{c_N}{\Big(1+\widehat{H}_0^{\frac{N}{N-1}}(x)\Big)^N}\Bigg]
    \end{equation}
with $c_N:=N\left(\frac{N^2}{N-1}\right)^{N-1}$. Moreover, \\
  $(\rm{i})$ if $k=N$, then $\mathcal{C}=\mathbb{R}^N$ and $x_0$ may be a generic point in $\mathbb{R}^N$;\\
  $(\rm{ii})$ if  $k\in\{1,\cdots ,N-1\}$, then $x_0\in\mathbb{R}^k\times\{0_{\mathbb{R}^{N-k}}\}$;\\
  $(\rm{iii})$ if $k=0$, then $x_0=0$.
  \end{thm}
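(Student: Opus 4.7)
The plan is to follow the integral-geometric strategy pioneered by \cite{E} in the Euclidean whole-space setting and adapted in \cite{CFR} to convex cones for the subcritical range $1<p<N$, and to push it through to the limiting exponent $p=N$ in the anisotropic framework. The core idea is to run a sharp isoperimetric-type argument on the super-level sets of $u$ and then invoke the rigidity part of the anisotropic isoperimetric inequality inside $\mathcal{C}$ to force all such level sets to be intersections with $\mathcal{C}$ of Wulff balls of $\widehat{H}_0$.

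First I would collect the analytic framework. Standard regularity theory for quasi-linear equations with bounded right-hand side and the conormal condition gives $u\in C^{1,\alpha}_{\mathrm{loc}}(\overline{\mathcal{C}})$. The logarithmic asymptotic of Lemma \ref{le:1+}, combined with the radial Poincar\'{e}-type Lemma \ref{A1} and the finite-mass hypothesis, yields a sharp asymptotic $u(x)=-\gamma\log\widehat{H}_0(x)+O(1)$ with explicit $\gamma$ depending on $\beta:=\int_\mathcal{C}e^u\,dx$ and $|\mathcal{C}\cap B^H_1(0)|$, together with an analogous gradient estimate. Setting
\[ U_t:=\{x\in\mathcal{C}:u(x)>t\},\qquad \mu(t):=|U_t|,\qquad \alpha(t):=\int_{U_t}e^u\,dx, \]
one integrates $-\Delta^H_N u=e^u$ over $U_t$ and exploits the Neumann condition $a(\nabla u)\cdot\nu=0$ on $\partial\mathcal{C}$ together with Euler's identity $\langle \nabla H(\xi),\xi\rangle=H(\xi)$ (from $1$-homogeneity) to obtain
\[ \alpha(t)=\int_{\{u=t\}\cap\overline{\mathcal{C}}}\frac{H^N(\nabla u)}{|\nabla u|}\,d\sigma, \]
while the coarea formula yields $-\mu'(t)=\int_{\{u=t\}\cap\overline{\mathcal{C}}}|\nabla u|^{-1}\,d\sigma$.

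Next I would run the sharp isoperimetric machinery. Applying H\"older's inequality with exponents $N$ and $N/(N-1)$ to the anisotropic perimeter $P_H(U_t;\mathcal{C})$ and combining with the two identities above yields
\[ P_H(U_t;\mathcal{C})^N\leq \alpha(t)\,(-\mu'(t))^{N-1}. \]
Feeding in the anisotropic isoperimetric inequality inside $\mathcal{C}$ from \cite{CRS}, namely $P_H(U_t;\mathcal{C})\geq N|\mathcal{C}\cap B^H_1(0)|^{1/N}\mu(t)^{(N-1)/N}$, one obtains the master differential inequality
\[ N^N|\mathcal{C}\cap B^H_1(0)|\,\mu(t)^{N-1}\leq \alpha(t)\,(-\mu'(t))^{N-1}. \]
Coupled with the layer-cake identity $\alpha'(t)=e^t\mu'(t)$, the boundary values $\alpha(-\infty)=\beta$ and $\alpha(+\infty)=0$, and the sharp asymptotics of $\alpha$, $\mu$ supplied by Lemma \ref{le:1+}, a single integration across $\mathbb{R}$ produces an upper bound on $\beta$ that is attained precisely on the candidate family $U^H_{\lambda,x_0}$.

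Finally, a Pohozaev-type identity on $\mathcal{C}\cap B_R$, whose boundary contribution on $\partial\mathcal{C}$ vanishes thanks to the Neumann condition and the convexity of $\mathcal{C}$, together with the sharp $C^1$-asymptotics established above, supplies a second integral identity that forces the master differential inequality to be an equality for a.e.\ $t\in\mathbb{R}$. Equality in H\"older then forces $|\nabla u|$ to be constant on each level set, and the rigidity part of the anisotropic isoperimetric inequality in \cite{CRS} forces each $U_t$ to be a Wulff ball of $\widehat{H}_0$ intersected with $\mathcal{C}$, centered at some $x_0\in\overline{\mathcal{C}}$ independent of $t$. The restrictions on the location of $x_0$ in items (i)--(iii) follow from the compatibility of these Wulff-shaped level sets with the product structure $\mathcal{C}=\mathbb{R}^k\times\widetilde{\mathcal{C}}$, exactly as in \cite{CFR}. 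Plugging the resulting ansatz (radial in the $\widehat{H}_0$ sense about $x_0$) back into \eqref{eq:1.1} reduces the equation to a single ODE whose only finite-mass solution, after matching the scaling, is the profile $U^H_{\lambda,x_0}$ of \eqref{eq:1.2}. The main obstacle is the sharp asymptotic analysis at infinity inside $\mathcal{C}$: standard blow-down arguments for quasi-linear equations must be carefully adapted to handle both the Neumann boundary on $\partial\mathcal{C}$ and the anisotropic structure, and only with Lemmas \ref{A1} and \ref{le:1+} in hand can one rigorously match the asymptotic constants required to close the rigidity argument.
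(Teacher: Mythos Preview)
Your overall strategy matches the paper's: super-level-set analysis combined with the anisotropic isoperimetric inequality in $\mathcal{C}$ yields a one-sided bound on the total mass (a \emph{lower} bound, not upper---integrating your master differential inequality gives $\left(\int_\mathcal{C}e^u\right)^{N/(N-1)}\ge S_N\int_\mathcal{C}e^u$), and a Pohozaev identity on $\mathcal{C}\cap B_R^{\widehat{H}_0}$ together with the sharp asymptotics forces the exact value of the mass, hence equality in both the H\"older step and the isoperimetric step. (A small slip: on the level set it is $H(\nabla u)$, not $|\nabla u|$, that H\"older equality makes constant; and the relevant perimeter is $P_{\widehat H}$ relative to $B_1^{\widehat H_0}$, not $P_H$ relative to $B_1^H$, since the outward normal to $\Omega_t$ is $-\nabla u/|\nabla u|$.)

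There is, however, a genuine gap. The rigidity case of the anisotropic isoperimetric inequality inside cones (Theorem~\ref{thm1}) only yields, for a.e.\ $t$, that $\Omega_t=\mathcal{C}\cap B_{R(t)}^{\widehat{H}_0}(x(t))$ for some $x(t)\in\mathbb{R}^k\times\{0_{\mathbb{R}^{N-k}}\}$; it does \emph{not} say the center is independent of $t$. Nested anisotropic balls need not be concentric, and nothing in the isoperimetric rigidity or in \cite{CFR} (whose argument for $1<p<N$ does not proceed via level sets) supplies this. The paper closes the gap by a separate argument: after observing from the nesting $\Omega_t\subset\Omega_s$ ($s<t$) that $x(t)$ is locally Lipschitz, one parametrizes $\partial\mathcal{C}_t$ as $S(t)=x(t)+R(t)\omega$ with $\omega\in\partial B_1^{\widehat{H}_0}\cap\mathcal{C}$, uses the equality-case information that $H(\nabla u)$ is constant on $\partial\mathcal{C}_t$ to deduce $\nabla u(S(t))=-C\,\nabla\widehat{H}_0(\omega)$, and differentiates the identity $u(S(t))=t$. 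Letting $\omega$ tend to the two points $\omega^{+}=x'(t_1)/\widehat H_0(x'(t_1))$ and $\omega^{-}=-x'(t_1)/H_0(x'(t_1))$ of $\partial B_1^{\widehat{H}_0}$ on the line through $0$ and $x'(t_1)$ produces two identities whose difference forces $\widehat{H}_0(x'(t_1))+H_0(x'(t_1))=0$, hence $x'(t_1)=0$. Without this step you have not pinned down a single $x_0$, so the ``plug back into \eqref{eq:1.1} and solve an ODE'' endgame cannot begin.
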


\begin{rem}
Our classification result in Theorem \ref{Th:1.1} is a counterpart in the limiting case $p=N$ of the classification results in \cite{CFR} for the critical anisotropic $p$-Laplacian equations with $1<p<N$ in convex cones, and also extends the classification results in \cite{CK,CL,CW,CL2,E} for $N$-D Liouville equation in the whole space $\mathbb{R}^{N}$ to general convex cones $\mathcal{C}$ including $\mathbb{R}^{N}$, the half space $\mathbb{R}^{N}_{+}$ and $\frac{1}{2^{m}}$-space $\mathbb{R}^{N}_{2^{-m}}$ ($1\leq m\leq N$).
\end{rem}
\begin{rem}
    One can easily verify that, the assumption ``$H$ is uniformly elliptic" in our Theorem \ref{Th:1.1} can be deduced from (and hence is weaker than) the condition ``$H^2$ is uniformly convex" assumed in \cite{CFR}. Indeed, if $H^2$ is uniformly convex, then there exist constants $0<\lambda\leq\Lambda$ such that
    $$\lambda \mathrm{Id}\leq H(\xi)D^2H(\xi)+\nabla H(\xi)\otimes \nabla H(\xi)\leq\Lambda\mathrm{Id}, \qquad \forall \,\, \xi\in\mathbb{R}^N\setminus\{0\},$$
which implies
$$\sum\limits_{i,j=1}^{N}H_{ij}(\xi)\zeta_i\zeta_j\geq\lambda|\zeta|^2, \qquad \forall \,\, \xi\in\partial B^H_1(0),\ \zeta\in\nabla H(\xi)^\perp ,$$
and thus $H$ is uniformly elliptic (see \cite{CFV}).
\end{rem}

We would like to mention some key ingredients and the outline of the proof of our main Theorem \ref{Th:1.1}.

\smallskip

First, we note that the classical methods such as the method of moving planes/spheres in conjunction with Kelvin type transforms for the higher order Liouville equations involving $(-\Delta)^{\frac{N}{2}}$ in \cite{CY,CK,CL,CW,DQ2,Lin,WX},  the method of moving planes for the critical $p$-Laplacian equations with $1<p<N$ in \cite{DLL,DFSV,DMMS,D,S,S2,BS16,T,VJ16} and the approach in \cite{CFR} for the critical $p$-Laplacian equations in convex cones no longer work for the $N$-Laplacian Liouville equation. We shall gain the classification result via isoperimetric inequalities (c.f. e.g. \cite{CL,CL2,E,KP}).

\smallskip

In order to get the sharp asymptotic estimates on $u$ and $\nabla u$ at infinity in cones, we need to prove and apply the radial Poincar\'{e} type inequality and the logarithmic asymptotic estimate for quasi-linear problem on convex cone $\mathcal{C}$, and use the Brezis-Merle type exponential inequality for Finsler $N$-Laplacian, Serrin's local $L^{\infty}$ estimate and the limiting arguments from \cite{CL2}. Finally, we can prove that $u(x)+\beta\log \widehat{H}_0(x)\in L^{\infty}(\overline{\mathcal{C}}\setminus B^{\widehat{H}_{0}}_{R_0}(0))$ (see Section 3). We would like to point out that due to the Finsler setting of $\Delta^{H}_{N}$, the approaches such as  Kelvin type transforms and the Green representation formulae in \cite{CK,CL,E}, the scaling argument and doubling argument (c.f. \cite{PQS}) in \cite{BS16,VJ16} and the De Giorgi-Moser-Nash iteration in \cite{CPY,DLL} do not seem to work.

\smallskip

One should note that, since we are dealing with problems in cones, in order to treat the boundary condition $\left\langle a(\nabla u),\nu\right\rangle=0$ on $\partial\mathcal{C}$, we need to prove and apply the radial Poincar\'{e} type inequality (see Lemma \ref{A1}) instead of the usual Sobolev embedding $W^{1,p}_{0}(\Omega)\hookrightarrow L^{\frac{Np}{N-p}}(\Omega)$ with $1\leq p<N$, which has its own importance and interests. The radial Poincar\'{e} type inequality only require the function $f\in W^{1,p}(\Omega)$ to satisfy $f=0$ on the (back) radial contact set $\Gamma^{+}_{P}\subset\partial\Omega$, where $P$ is the radial center (see Appendix \ref{appendix}). In particular, if $\Omega=\mathcal{C}\cap E$ with $E\subset\mathbb{R}^{N}$ be a bounded domain, then $P=0$ and $\Gamma^{+}_{0}\subset\partial E\cap\mathcal{C}$. We note that this inequality yields the usually well-known Poincar\'{e} inequality as a direct corollary and has many interesting applications (see Appendix \ref{appendix}).

\smallskip

Next, since we are dealing with problems in cones, the second order regularity up to the boundary should be taken into account carefully. The main difficulties are the degeneracy of the equation and the non-smoothness of $\partial\mathcal{C}$. We will prove the second order regularity $a(\nabla u)\in W^{1,2}_{loc}(\overline{\mathcal{C}})$ for weak solution $u$ via an approximation method in cones appeared in \cite{CFR} and also in \cite{CL1} (see Section 4). Since $u$ is not $C^1$-smooth up to the boundary $\partial\mathcal{C}$, we need to apply an approximation argument used in the proof of \cite[Theorem 1.3]{CRS} to determine the precise value of the constant $\beta$ in the sharp asymptotic estimates on $u$ and $\nabla u$. Then, based on the sharp asymptotic estimates on $u$ and $\nabla u$ and the second order regularity, we can prove a delicate quantization result on the total mass $\int_{\mathcal{C}}e^u\mathrm{d}x$ by using the anisotropic isoperimetric inequality inside convex cones and the Pohozaev type identity, which indicates the equality holds in the isoperimetric inequality inside convex cones (see Section 5). Finally, from the characteristic of the equality case (see Theorem 2.5), we conclude that
  $$\partial\mathcal{C}_t:=\{x\in\mathcal{C}\mid\,u(x)=t\}=\partial B^{\widehat{H}_0}_{R(t)}(x(t))\cap\mathcal{C}$$
 and
 $$ e^t=\frac{c_N\lambda^N}{\left[1+\lambda^{\frac{N}{N-1}}R^{\frac{N}{N-1}}(t)\right]^N},$$
where
\begin{equation*}
    \lambda:=\left[\frac{e^{t_0}}{N\left(\frac{N^2}{N-1}\right)^{N-1} }\right]^{\frac{1}{N}},\quad t=u(x),\quad \widehat{H}_0(x-x(t))=R(t),\quad x\in\partial\mathcal{C}_{t}.
\end{equation*}Moreover, by an approximation method, we can show that, at $x_{1}=x(t_{1})$ with $x'(t_{1})\neq0$,
 $$1=-C(\widehat{H}_0(x_1)+R^{\prime}(t_1))=-C\left(-H_0(x_1)+R^{\prime}(t_1)\right),$$
 which is absurd, and hence $x'(t)\equiv0$, that is, $x(t)\equiv x_0\in \mathbb{R}^k\times\{0_{\mathbb{R}^{N-k}}\}$. The classification results in Theorem \ref{Th:1.1} follow immediately (see Section 6).

\medskip

The rest of our paper is organized as follows. In Section 2, we will give some preliminaries on some basic properties of the gauge $H$, the anisotropic isoperimetric inequality inside convex cones, the Brezis-Merle type exponential inequality for Finsler $N$-Laplacian, Serrin's local $L^{\infty}$ estimate, the logarithmic asymptotic estimate for quasi-linear problem on convex cone $\mathcal{C}$, the comparison principle and Liouville type result in convex cones, property of the trace of matrices, and so on. Section 3 and Section 4 are devoted to proving sharp asymptotic estimates at infinity on $u$ and $\nabla u$, and the second-order regularity for weak solutions, respectively. In Section 5, we derive precise quantization results on the total mass $\int_{\mathcal{C}}e^{u}\mathrm{d}x$ by the anisotropic isoperimetric inequality inside convex cones and the Pohozaev type identity. Finally, we will carry out our proof of Theorem \ref{Th:1.1} in Section 6. In appendix \ref{appendix}, we will prove the radial Poincar\'{e} type inequality (Lemma \ref{A1}) and apply it to show the Brezis-Merle type exponential inequality for mixed boundary value problems of Finsler $N$-Laplacian in Lemma \ref{le:1+a} and Remark \ref{rem-a}, which have their own importance and interests.

\medskip

In the following, we will use $C$ to denote a general positive constant that may depend on $N$, $k$, $\mathcal{C}$, $H$ and $u$, and whose value may differ from line to line.

\section{Preliminaries}
We need the following two Lemmas on basic properties of the gauge $H$.
\begin{lem}\cite[Lemma 2.2]{CL1}\label{8,le:2.2}
If the gauge $H\in C^2(\mathbb{R}^N\setminus\{0\})$ and the Hessian of $H^{N}$ is positive definite in $\mathbb{R}^N\setminus\{0\}$, then $H$ is convex and $H_{0}\in C^2(\mathbb{R}^N\setminus\{0\})$. Moreover, for $x,\xi\in\mathbb{R}^N\setminus\{0\}$,
    \begin{equation}
H\left(\nabla H_0(x)\right)=H_0(\nabla H(\xi))=1,
\end{equation}
and
\begin{equation}\label{eq:2.2}
x=H_0(x) \nabla H\left(\nabla H_0(x)\right), \quad \xi=H(\xi) \nabla H_0(\nabla H(\xi)),
\end{equation}
where $H_0$ is the dual function of $H$ defined by \eqref{def:2.1}.
\end{lem}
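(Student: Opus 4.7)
The plan is to interpret $H$ as the Minkowski gauge and $H_0$ as the support function of the convex body $K:=\{H\leq 1\}$, and then to invoke classical convex-geometric duality. For the convexity of $H$: since $D^2(H^N)>0$ on $\mathbb{R}^N\setminus\{0\}$, the function $H^N$ is strictly convex there, so the sublevel set $K=\{H^N\leq 1\}=\{H\leq 1\}$ is a strictly convex body, bounded and with $0$ in its interior thanks to $H(\xi)\geq c_H|\xi|>0$ for $\xi\neq 0$. Positive $1$-homogeneity and strict positivity of $H$ then force $H$ to be the Minkowski gauge of $K$, and since the gauge of a convex body is convex, $H$ is convex.

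Next, $H_0(x)=\sup_{H(\xi)\leq 1}\langle x,\xi\rangle$ is by definition the support function of $K$. Expanding $D^2(H^N)$ via $\nabla(H^N)=NH^{N-1}\nabla H$ and Euler's identity $\langle\nabla H(\xi),\xi\rangle=H(\xi)$ yields, at $H(\xi)=1$, the tangential relation $\langle D^2(H^N)(\xi)v,v\rangle=N\langle D^2H(\xi)v,v\rangle$ for every $v\in(\nabla H(\xi))^\perp$. Hence positive definiteness of $D^2(H^N)$ forces the second fundamental form of $\partial K=\{H=1\}$ to be positive definite, so $K$ is a uniformly convex $C^2$ body, and the classical correspondence between strongly convex $C^2$ bodies and their support functions yields $H_0\in C^2(\mathbb{R}^N\setminus\{0\})$.

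To derive the duality identities, I fix $x\neq 0$ and let $\xi^\ast\in\partial K$ be the (unique, by strict convexity of $K$) maximiser in $H_0(x)=\sup_{H(\xi)=1}\langle x,\xi\rangle$. Lagrange multipliers give $x=\mu\,\nabla H(\xi^\ast)$ for some $\mu>0$, and Euler's relation forces $\mu=\langle x,\xi^\ast\rangle=H_0(x)$, so $x=H_0(x)\,\nabla H(\xi^\ast)$. Danskin's envelope theorem applied to the support function identifies $\xi^\ast$ with $\nabla H_0(x)$, giving $H(\nabla H_0(x))=1$ and $x=H_0(x)\,\nabla H(\nabla H_0(x))$. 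The companion identities $H_0(\nabla H(\xi))=1$ and $\xi=H(\xi)\,\nabla H_0(\nabla H(\xi))$ then follow by swapping the roles of $H$ and $H_0$, invoking the involution $(H_0)_0=H$ valid for gauges of convex bodies with $0$ in the interior.

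The main obstacle is the regularity step: converting the coordinate-free hypothesis $D^2(H^N)>0$ into genuine positivity of the principal curvatures of $\partial K$, and then transferring uniform convexity and $C^2$-regularity from $\partial K$ to the support function $H_0$, requires care (one must be precise about the restriction of $D^2H$ to $(\nabla H)^\perp$ and about the regularity correspondence between strongly convex $C^2$ bodies and their support functions). Once this convex-geometric regularity is secured, both the convexity of $H$ and the duality identities reduce to essentially formal computations.
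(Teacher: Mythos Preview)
The paper does not supply its own proof of this lemma; it is quoted verbatim from \cite[Lemma~2.2]{CL1} and simply cited as a preliminary fact. Your proposal is therefore not competing with any argument in the present paper, but rather filling in the details that the authors outsource to the reference.

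Your approach is correct and is essentially the standard convex-geometric route to these identities: identify $K=\{H\le 1\}$ as a strictly convex $C^2$ body with $0$ in its interior, recognise $H_0$ as its support function, and use Lagrange multipliers together with the envelope theorem to read off $\nabla H_0(x)$ as the unique maximiser $\xi^\ast\in\partial K$. The expansion $D^2(H^N)=N(N-1)H^{N-2}\nabla H\otimes\nabla H+NH^{N-1}D^2H$ restricted to $(\nabla H)^\perp$ at $H=1$ does give exactly $N\,D^2H|_{(\nabla H)^\perp}$, so positivity of the principal curvatures of $\partial K$ follows as you claim. One small point worth making explicit: to conclude that $K$ is convex (not just that $\partial K$ has positive curvature locally) you are implicitly using that $H^N$ extends to a $C^1$ convex function on all of $\mathbb{R}^N$ (the gradient $NH^{N-1}\nabla H$ vanishes continuously at the origin for $N\ge 2$), so the sublevel set is globally convex; this is routine but should be stated. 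With that caveat, the argument is complete and matches what one finds in \cite{CL1} or in standard references on Finsler/Wulff geometry such as \cite{CFV}.
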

\begin{lem}\cite[Lemma 2.3]{CL1}\label{8,le:2.3}
    Assume that $H\in C^2(\mathbb{R}^N\setminus\{0\})$ is uniformly elliptic. Then there exists $\lambda>0$ such that
    \begin{equation}\label{ineq:2.3}
        \partial_{ij}^2(H^N)(\xi)\eta_i\eta_j\geq\frac{1}{\lambda}|\xi|^{N-2}|\eta|^2\qquad
        \text{and}\qquad \sum\limits_{i,j}| \partial_{ij}^2(H^N)(\xi)|\leq\lambda|\xi|^{N-2}
    \end{equation}
    for any $\xi\in\mathbb{R}^N\setminus\{0\}$ and $\eta\in\mathbb{R}^N$. Furthermore, there exist $c_1$, $c_2>0$, depending only on $N$ and $\lambda$, such that
    \begin{equation}\label{aeq:2.4}
        \begin{aligned}
            \langle a(\xi_1)-a(\xi_2), \xi_1-\xi_2\rangle\geq c_1(|\xi_1|+|\xi_2|)^{N-2}|\xi_1-\xi_2|^2
        \end{aligned}
    \end{equation}
    and
    \begin{equation}\label{aeq:2.5}
        \begin{aligned}
            |a(\xi_1)-a(\xi_2)|\leq c_2(|\xi_1|+|\xi_2|)^{N-2}|\xi_1-\xi_2|
        \end{aligned}
    \end{equation}
    for any $\xi_1$, $\xi_2\in\mathbb{R}^N\setminus\{0\}$.
\end{lem}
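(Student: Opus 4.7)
The plan is first to establish the Hessian bounds on $H^N$ in \eqref{ineq:2.3} by combining direct differentiation, the positive homogeneity of $H$, and its uniform ellipticity. The estimates \eqref{aeq:2.4} and \eqref{aeq:2.5} on the vector field $a(\xi)=\frac{1}{N}\nabla(H^N)(\xi)$ then follow by writing $a(\xi_1)-a(\xi_2)$ as a line integral of $D^2(H^N)$ along the segment joining $\xi_1$ and $\xi_2$, and combining the Hessian bounds with elementary one-dimensional estimates of $p$-Laplacian type.

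For the Hessian bounds, direct differentiation yields
\[
\partial_{ij}^2(H^N)(\xi)=N(N-1)H^{N-2}(\xi)H_i(\xi)H_j(\xi)+NH^{N-1}(\xi)H_{ij}(\xi).
\]
The positive $1$-homogeneity of $H$ forces $\nabla H$ to be $0$-homogeneous and $D^2H$ to be $(-1)$-homogeneous, so the right-hand side is $(N-2)$-homogeneous, and it suffices to verify both bounds on the compact set $\{|\xi|=1\}$. For the lower bound, I would decompose an arbitrary $\eta\in\mathbb{R}^N$ as $\eta=\alpha\,\widehat{n}(\xi)+\eta^\perp$ with $\widehat{n}(\xi):=\nabla H(\xi)/|\nabla H(\xi)|$ and $\eta^\perp\in\nabla H(\xi)^\perp$. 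Euler's identity $\xi\cdot\nabla H(\xi)=H(\xi)$ combined with \eqref{norm} gives $|\nabla H(\xi)|\geq H(\xi)/|\xi|\geq c_H$ uniformly on the unit sphere, so the first summand contributes at least $N(N-1)H^{N-2}|\nabla H|^2\alpha^2\gtrsim \alpha^2$. For the tangential component, the uniform ellipticity of $H$ yields $H_{ij}(\xi)\eta^\perp_i\eta^\perp_j\geq \lambda_0|\eta^\perp|^2$ on $\{|\xi|=1\}$, while the cross term $2\alpha H_{ij}(\xi)\widehat{n}_i\eta^\perp_j$ is controlled by a Cauchy--Schwarz absorption applied to the positive semi-definite form $H_{ij}$, producing the required uniform coercivity. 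The upper bound in \eqref{ineq:2.3} is immediate from the boundedness of $H$, $\nabla H$ and $D^2H$ on $\{|\xi|=1\}$, followed by scaling.

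For \eqref{aeq:2.4} and \eqref{aeq:2.5}, using $a_i=\frac{1}{N}\partial_i(H^N)$ the fundamental theorem of calculus gives
\[
a(\xi_1)-a(\xi_2)=\frac{1}{N}\int_0^1 D^2(H^N)(\xi_t)(\xi_1-\xi_2)\,\mathrm{d}t,\qquad \xi_t:=(1-t)\xi_2+t\xi_1.
\]
Pairing with $\xi_1-\xi_2$ and inserting the lower bound just established produces
\[
\langle a(\xi_1)-a(\xi_2),\xi_1-\xi_2\rangle\geq\frac{1}{N\lambda}|\xi_1-\xi_2|^2\int_0^1|\xi_t|^{N-2}\,\mathrm{d}t,
\]
so \eqref{aeq:2.4} follows from the classical one-variable estimate
\[
\int_0^1 |(1-t)\xi_2+t\xi_1|^{N-2}\,\mathrm{d}t\geq c_N(|\xi_1|+|\xi_2|)^{N-2}, \qquad N\geq 2,
\]
proved by a short case analysis according to whether the segment $[\xi_1,\xi_2]$ stays uniformly away from the origin or passes near it. The Lipschitz bound \eqref{aeq:2.5} follows from the same integral representation together with the matrix-norm estimate $\|D^2(H^N)(\xi_t)\|\leq \lambda|\xi_t|^{N-2}$ derived above and $|\xi_t|\leq|\xi_1|+|\xi_2|$.

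The principal technical point is the coercivity step: because $D^2H(\xi)$ degenerates along $\xi$ (a consequence of the $0$-homogeneity of $\nabla H$), while the uniform ellipticity hypothesis is phrased on the tangent space $\nabla H(\xi)^\perp$, one must balance the rank-one contribution $N(N-1)H^{N-2}H_iH_j$ (which supplies coercivity along $\nabla H$) against the tangential ellipticity of $H_{ij}$ and absorb the mixed term $NH^{N-1}H_{ij}\widehat{n}_i\eta^\perp_j$; everything else reduces to standard scalar manipulations and homogeneity scaling.
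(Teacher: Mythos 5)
This lemma is not proved in the paper at all: it is imported verbatim from \cite[Lemma 2.3]{CL1}, so there is no in-paper argument to compare against. Your proposal supplies a self-contained proof, and its overall route (compute $D^2(H^N)=NH^{N-2}\big[(N-1)\nabla H\otimes\nabla H+H\,D^2H\big]$, reduce to the unit sphere by $(N-2)$-homogeneity and compactness, then get \eqref{aeq:2.4}--\eqref{aeq:2.5} from the line-integral representation of $a=\frac1N\nabla(H^N)$ together with $\int_0^1|\xi_t|^{N-2}\,\mathrm{d}t\geq c_N(|\xi_1|+|\xi_2|)^{N-2}$ for $N\geq2$) is the standard one and is correct.

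Two steps deserve to be written out rather than asserted. First, the coercivity step: with your orthogonal splitting $\eta=\alpha\widehat{n}+\eta^{\perp}$, the Cauchy--Schwarz absorption of $2\alpha H_{ij}\widehat{n}_i\eta^{\perp}_j$ costs a term $(\delta^{-1}-1)\alpha^{2}H_{ij}\widehat{n}_i\widehat{n}_j$, and this closes only because on $\{|\xi|=1\}$ one has the uniform upper bound $H_{ij}\widehat{n}_i\widehat{n}_j\leq\Lambda_1$ (by $C^2$ regularity and compactness) against the uniform lower bound $N(N-1)H^{N-2}|\nabla H|^{2}\geq N(N-1)c_H^{N}$ coming from \eqref{norm} and Euler's identity; one must take $\delta$ close to $1$ and say this explicitly, since a generic choice such as $\delta=\tfrac12$ does not work. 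A cleaner alternative avoids cross terms entirely: decompose $\eta=s\xi+\tau$ with $\tau\in\nabla H(\xi)^{\perp}$ (a genuine, uniformly transversal splitting because $\xi\cdot\nabla H(\xi)=H(\xi)>0$); then $\nabla H\cdot\eta=sH$ and, since $\nabla H$ is $0$-homogeneous, $D^2H(\xi)\xi=0$, so $D^2H(\eta,\eta)=D^2H(\tau,\tau)\geq\lambda_0|\tau|^{2}$, and the two summands are separately coercive in $s$ and $\tau$. Second, the integral representation $a(\xi_1)-a(\xi_2)=\frac1N\int_0^1D^2(H^N)(\xi_t)(\xi_1-\xi_2)\,\mathrm{d}t$ needs a word when the segment meets the origin, where $D^2(H^N)$ is undefined: since $\lVert D^2(H^N)(\xi_t)\rVert\leq\lambda|\xi_t|^{N-2}\leq\lambda(|\xi_1|+|\xi_2|)^{N-2}$ for $N\geq2$, the map $t\mapsto a(\xi_t)$ is Lipschitz on $[0,1]$, hence absolutely continuous, and the formula persists. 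With these details added (the one-dimensional estimate itself is immediate, e.g. $|\xi_t|\geq(2t-1)\max\{|\xi_1|,|\xi_2|\}\geq\tfrac14(|\xi_1|+|\xi_2|)$ for $t\in[\tfrac34,1]$), your argument is a complete proof of the cited lemma.
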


We also need the following two lemmas on Brezis-Merle type exponential inequality (c.f. \cite{BF}) for Finsler $N$-Laplacian (see \cite{AP,E,RW} for $N$-Laplacian and \cite{CL2,WX2,XG} for Finsler $N$-Laplacian) and Serrin's local $L^{\infty}$ estimate (c.f. \cite{S}, see also \cite{CL2}), respectively.
\begin{lem}\cite[Lemma 2.2]{CL2}\label{le:1}
    Let $\Omega$ be a bounded domain in $\mathbb{R}^N$ and $f\in L^1(\Omega)$. Let u $\in W^{1,N}(\Omega)$ be a weak solution of
    \begin{equation}\label{eq:2.1}
        -\Delta ^{H}_{N}u=f\ \ \text{\rm in}\ \Omega
    \end{equation}
    and let $h\in W^{1,N}(\Omega)$ be the weak solution of
    \begin{equation}
        \left\{
            \begin{aligned}
            &-\Delta ^{H}_{N}h=0 \qquad &\rm{in}\,\, \Omega, \\
            &h =u\qquad &\rm{on}\,\, \partial\Omega.
            \end{aligned}
            \right.
    \end{equation}
    Then there exists a constant $\mu$, depending only on $N$ and $H$, such that for every
    $0<\lambda<\mu \|f\|_{L^1(\Omega)}^{-\frac{1}{N-1}}$, it holds
    \begin{equation*}
        \int_{\Omega}e^{\lambda|u-h|}\mathrm{d}x\leq \frac{\mathcal{L}^N(\Omega)}{1-\lambda\mu^{-1}\|f\|_{L^1(\Omega)}^{\frac{1}{N-1}}}.
    \end{equation*}
\end{lem}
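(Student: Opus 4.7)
The strategy is to derive a first-order differential inequality for the distribution function $m(t) := |\{|v|>t\}|$, where $v := u-h$, integrate it to obtain exponential decay of $m$, and then extract the exponential integrability of $|v|$ via the layer-cake identity. Since $v \in W^{1,N}_0(\Omega)$ (by $u=h$ on $\partial\Omega$), subtracting the weak formulations of the two equations gives
\begin{equation*}
\int_\Omega \langle a(\nabla u) - a(\nabla h), \nabla \varphi\rangle \, dx = \int_\Omega f \varphi \, dx \qquad \forall \, \varphi \in W^{1,N}_0(\Omega) \cap L^\infty(\Omega).
\end{equation*}

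First, for $t, s > 0$, I would test this identity with the Lipschitz truncation $\psi_{t,s}(v)$, where $\psi_{t,s}(r) := \mathrm{sgn}(r)\min(\max(|r|-t,0),s)$, so that $\nabla\psi_{t,s}(v) = \mathbf{1}_{\{t<|v|<t+s\}}\nabla v$ and $|\psi_{t,s}(v)|\leq s$. Combining the monotonicity bound \eqref{aeq:2.4} with the elementary inequality $(|\xi_1|+|\xi_2|)^{N-2}|\xi_1-\xi_2|^2 \geq |\xi_1-\xi_2|^N$ (valid for $N\geq 2$ by the triangle inequality) yields
\begin{equation*}
c_1 \int_{\{t<|v|<t+s\}} |\nabla v|^N \, dx \leq s \, \|f\|_{L^1(\Omega)}.
\end{equation*}
Dividing by $s$, letting $s\to 0^+$, and applying the coarea formula produces the level-set estimate $\int_{\{|v|=t\}} |\nabla v|^{N-1} \, d\mathcal{H}^{N-1} \leq C(N,H)\, \|f\|_{L^1}$ for a.e.\ $t>0$.

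Second, applying H\"older's inequality with exponents $N$ and $N/(N-1)$ on the level set $\{|v|=t\}$ (writing $1 = |\nabla v|^{(N-1)/N} \cdot |\nabla v|^{-(N-1)/N}$) and combining with the coarea identity $-m'(t) = \int_{\{|v|=t\}} |\nabla v|^{-1}\, d\mathcal{H}^{N-1}$ gives
\begin{equation*}
\mathcal{H}^{N-1}(\{|v|=t\})^N \leq \Big(\int_{\{|v|=t\}} |\nabla v|^{N-1} \, d\mathcal{H}^{N-1}\Big) (-m'(t))^{N-1}.
\end{equation*}
Extending $v$ by zero to $\mathbb{R}^N$ and applying the sharp anisotropic (Wulff) isoperimetric inequality together with the norm equivalence \eqref{norm} furnishes the lower bound $\mathcal{H}^{N-1}(\{|v|=t\}) \geq c(N,H) \, m(t)^{(N-1)/N}$, and assembling everything produces the differential inequality
\begin{equation*}
-m'(t) \geq \mu \, \|f\|_{L^1(\Omega)}^{-1/(N-1)} \, m(t)
\end{equation*}
for a constant $\mu = \mu(N,H) > 0$. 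Integration yields $m(t) \leq |\Omega| \exp\bigl(-\mu \|f\|_{L^1}^{-1/(N-1)} t\bigr)$.

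Finally, the layer-cake identity $\int_\Omega e^{\lambda|v|} \, dx = |\Omega| + \lambda \int_0^\infty e^{\lambda t} m(t)\, dt$ combined with this exponential decay yields the claimed bound for $\lambda < \mu \|f\|_{L^1}^{-1/(N-1)}$. The main technical obstacle is the careful bookkeeping of constants throughout the H\"older and isoperimetric steps so that $\mu$ depends only on $N$ and $H$ (not on $\Omega$); this is feasible because the Wulff isoperimetric constant and the norm-equivalence constants $c_H, C_H$ of \eqref{norm} are geometric invariants of $H$, and because $v$ extends by zero to $\mathbb{R}^N$ so that no domain-dependent isoperimetric constant enters.
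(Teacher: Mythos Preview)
Your proof is correct and follows the same overall strategy as the paper's own proof of the closely related variant Lemma~\ref{le:1+a} in Appendix~\ref{appendix} (the lemma itself is cited from \cite{CL2} without proof): test the difference of the two equations against a double truncation of $v=u-h$, extract the energy bound $\int_{\{k<|v|\le k+a\}}|\nabla v|^N\,dx\le Ca\|f\|_{L^1}$, turn this into a first-order differential inequality for the distribution function, integrate to get exponential decay, and conclude via layer-cake.

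The one tactical difference is that you pass to individual level sets by coarea, apply H\"older on $\{|v|=t\}$, and then invoke the isoperimetric inequality, whereas the paper stays on thick layers throughout and applies the Sobolev embedding $W^{1,1}_0(\mathbb{R}^N)\hookrightarrow L^{N/(N-1)}(\mathbb{R}^N)$ directly to $T_{k+a}(v)-T_k(v)$ before using H\"older. Since that sharp Sobolev inequality is itself equivalent to coarea plus isoperimetry, the two routes are really the same argument unpacked to different depths. Two small remarks: the standard Euclidean isoperimetric inequality already gives what you need (invoking the anisotropic Wulff version is unnecessary), and your level-set identities $-m'(t)=\int_{\{|v|=t\}}|\nabla v|^{-1}\,d\mathcal H^{N-1}$ and $\mathcal H^{N-1}(\{|v|=t\})=P(\{|v|>t\})$ hold only for a.e.\ $t$ and rely on the coarea formula for Sobolev functions together with a Sard-type argument; either add a word of justification, or avoid the issue entirely by staying on thick layers as the paper does.
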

\begin{lem}\cite[Lemma 2.3]{CL2}\label{le:2}
    Let $u\in W_{loc}^{1,N}(\Omega)$ be a weak solution of \eqref{eq:2.1} with $f\in L^{\frac{N}{N-\varepsilon}}(\Omega)$ for
    some $0 < \varepsilon \leq 1$. Assume $B_{2R}\subset\Omega$. Then
    \begin{equation*}
       \|u^+\|_{L^{\infty}(B_R)}
       \leq CR^{-1}\|u^+\|_{L^{N}(B_{2R})}+CR^{\frac{\varepsilon}{N-1}}\|f\|_{L^{\frac{N}{N-\varepsilon}}(B_{2R})}^{\frac{1}{N-1}},
    \end{equation*}
    where $C=C(N, \varepsilon, H)$ is a constant.
\end{lem}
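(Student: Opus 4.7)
The estimate is Serrin's classical local boundedness theorem \cite{S} transplanted to the anisotropic $N$-Laplacian. My plan is a standard Moser iteration driven by the structure conditions
\[
\langle a(\xi),\xi\rangle = H^{N}(\xi) \geq c_H^N|\xi|^N, \qquad |a(\xi)|\leq C|\xi|^{N-1},\qquad \xi\in\mathbb{R}^N\setminus\{0\},
\]
which follow from Euler's identity $\nabla H(\xi)\cdot\xi=H(\xi)$ together with the two-sided bound \eqref{norm}. Under these conditions the classical scalar $N$-Laplacian argument applies almost verbatim, modulo trivial anisotropic modifications hidden in the constants.

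First I would set $v:=u^+$ and, for a parameter $\beta\geq 1$ and a nonnegative cutoff $\eta\in C_c^\infty(B_{2R})$, insert $\varphi:=\eta^N v^\beta$ into the weak formulation of \eqref{eq:2.1}. On $\{u\leq 0\}$ the test function is flat, so only $\{u>0\}$ contributes. The coercivity of $\langle a(\nabla u),\nabla u\rangle$ there, combined with Young's inequality absorbing the cross term generated by $\nabla\eta$, yields the Caccioppoli estimate
\[
\int \eta^N v^{\beta-1}|\nabla v|^N\,dx\ \leq\ C\beta^{N-1}\int v^{\beta+N-1}|\nabla\eta|^N\,dx\ +\ C\beta^{N-1}\int|f|\eta^N v^\beta\,dx.
\]
Setting $w:=v^{(\beta+N-1)/N}$ and $p:=\beta+N-1$, the left-hand side becomes a multiple of $\int\eta^N|\nabla w|^N\,dx$. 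For any fixed $\chi>1$, the subcritical Sobolev embedding $W^{1,N/(1+1/\chi)}_0\hookrightarrow L^{\chi N}$, composed with H\"older, delivers
\[
\|\eta w\|_{L^{\chi N}(B_{2R})}\leq CR^{1/\chi}\bigl(\|\eta\nabla w\|_{L^N(B_{2R})}+\|w\nabla\eta\|_{L^N(B_{2R})}\bigr),
\]
which is the substitute for the failed endpoint $W^{1,N}_0\hookrightarrow L^\infty$. Estimating the $f$-integral by H\"older with conjugate exponents $\tfrac{N}{N-\varepsilon}$ and $\tfrac{N}{\varepsilon}$, and choosing $\eta$ of the standard cutoff form between radii $\rho'<\rho\leq 2R$, I obtain the one-step improvement
\[
\|v\|_{L^{\chi p}(B_{\rho'})}\ \leq\ \bigl[C\beta^{N-1}(\rho-\rho')^{-N}\bigr]^{1/p}\Big(\|v\|_{L^p(B_\rho)}+R^{\varepsilon/(N-1)}\|f\|_{L^{N/(N-\varepsilon)}(B_{2R})}^{1/(N-1)}\Big).
\]

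Then I would iterate with $p_k:=N\chi^k$, $\beta_k:=p_k-N+1$, and shrinking radii $\rho_k:=R(1+2^{-k})$, starting from $p_0=N$. Because $p_k$ grows geometrically, both $\sum_k p_k^{-1}$ and $\sum_k p_k^{-1}\log(C\beta_k^{N-1}2^{kN})$ converge, so passing $k\to\infty$ yields the claimed $L^\infty$ bound on $B_R$ with a constant depending only on $N,\varepsilon,H$. The precise powers of $R$ on the right-hand side are most transparently pinned down by a preliminary rescaling $\tilde u(y):=u(Ry)$, $\tilde f(y):=R^N f(Ry)$, performing the iteration on the unit scale, and then undoing the scaling: the dimensional balance $[f]=[u]^{N-1}R^{-N}$ dictated by the equation forces $R^{-1}$ in front of $\|u^+\|_{L^N}$ and $R^{\varepsilon/(N-1)}$ in front of $\|f\|_{L^{N/(N-\varepsilon)}}^{1/(N-1)}$. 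The main obstacle is purely bookkeeping --- verifying the uniform summability above and collecting the correct exponents through the geometric sequence of radii --- since the anisotropy enters only through the harmless constants $c_H,C_H$ and produces exactly the same ellipticity and growth exponents as in the isotropic $N$-Laplacian theory.
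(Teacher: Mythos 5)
The paper does not prove this lemma at all: it is quoted verbatim from \cite[Lemma 2.3]{CL2} as a form of Serrin's local $L^{\infty}$ estimate \cite{S}, and your Moser-type iteration is exactly the standard argument underlying that citation (structure conditions $\langle a(\xi),\xi\rangle=H^N(\xi)\geq c_H^N|\xi|^N$, $|a(\xi)|\leq C_H^N|\xi|^{N-1}$, Caccioppoli with $\varphi=\eta^N (u^+)^\beta$, a subcritical Sobolev embedding replacing the failed endpoint $W^{1,N}\hookrightarrow L^\infty$, iteration, and rescaling to the unit ball to fix the powers of $R$). Your sketch is correct in outline; the only points left implicit are routine -- truncating $u^+$ so that $\eta^N (u^+)^\beta$ is an admissible ($W^{1,N}_0\cap L^\infty$) test function, and the Serrin normalization $\bar v=u^+ + k$ with $k=R^{\varepsilon/(N-1)}\|f\|_{L^{N/(N-\varepsilon)}(B_{2R})}^{1/(N-1)}$ (with the gain exponent $\chi$ tied to $\varepsilon$, e.g. $\chi=N/\varepsilon$) needed to absorb the $f$-term and obtain the homogeneous one-step inequality you display.
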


Consider the following quasi-linear problem on convex cone $\mathcal{C}$:
\begin{equation}\label{eq-2}
\left\{
          \begin{aligned}
          &\operatorname{div}\,{\bf{a}}(\nabla u)=f(x)\quad\,\,\, &\text{in} \,\, \mathcal{C}\setminus\overline{B_1(0)}, \\
          &{\bf{a}}(\nabla u)\cdot \nu =0 \quad\,\,\, &{\rm{on}} \,\, \partial\mathcal{C}\setminus\overline{B_1(0)},
          \end{aligned}
          \right.
\end{equation}
where ${\bf{a}}:\,\mathbb{R}^{N}\rightarrow\mathbb{R}^{N}$ satisfies
\begin{equation}\label{eq-3}
	|{\bf{a}}(p)|\leq a_{1}\,|p|^{N-1}\quad\,\,\text{and}\quad\,\,\left<{\bf{a}}(p),p\right>\geq a_{0}\,|p|^N, \qquad\forall \,\, p\in\mathbb{R}^N
\end{equation}
for some constants $a_{1},a_{0}>0$, and $f\in L^1(\mathcal{C}\setminus\overline{B_1(0)})$ is a scalar function satisfying
\begin{equation}\label{eq-4}
	0\le f(x)\le\frac{C}{|x|^N}, \quad\,\,\,\,\, \text{a.e.} \,\, x\in\mathcal{C}\setminus\overline{B_1(0)}
\end{equation}
for some constant $C>0$. It follows from the elliptic estimates in \cite{D,S2,T} and the boundary regularity in \cite{L} that $u\in C^{1,\theta}_{loc}\left(\overline{\mathcal{C}}\setminus\overline{B_1(0)}\right)$ for some $\theta\in(0,1)$. We can derive the following logarithmic asymptotic estimate in convex cone $\mathcal{C}$ at infinity by using the radial Poincar\'{e} type inequality in Lemma \ref{A1} in Appendix \ref{appendix}, which has its own interests and importance.
\begin{thm}[Logarithmic asymptotic behavior in convex cone $\mathcal{C}$ at infinity]\label{le:1+}
Suppose \eqref{eq-3} and \eqref{eq-4} hold. Let $u$ be a nonnegative solution of \eqref{eq-2}. If $u(x)\to+\infty$ as $|x|\to+\infty$ with $x\in\mathcal{C}$, then there exists a constant $L>1$ such that
	\begin{equation}\label{eq-5}
		\frac{1}{L}\log|x|\le u(x)\le L\log|x|
	\end{equation}
for any $x\in\mathcal{C}$ with $|x|$ large enough.
\end{thm}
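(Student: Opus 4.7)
The plan is to prove the upper and lower logarithmic bounds separately, by working on the dyadic annular regions $\mathcal{A}_R := (B_{2R}(0) \setminus \overline{B_{R/2}(0)}) \cap \mathcal{C}$ and iterating. The main analytic tools are the Brezis--Merle type inequality (Lemma \ref{le:1}), Serrin's local $L^{\infty}$ estimate (Lemma \ref{le:2}), and the radial Poincar\'{e} inequality (Lemma \ref{A1}), which together replace the Moser iteration that is unavailable in the Finsler setting and also handle the cone's mixed Dirichlet/Neumann boundary.

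The starting observation is a flux identity: integrating the equation over $(B_R \setminus B_{R_0}) \cap \mathcal{C}$ and using the Neumann condition on $\partial \mathcal{C}$ gives
\[
\int_{\partial B_R \cap \mathcal{C}} \mathbf{a}(\nabla u) \cdot \frac{x}{|x|}\, d\sigma = \int_{(B_R \setminus B_{R_0})\cap \mathcal{C}} f\, dx + \int_{\partial B_{R_0}\cap \mathcal{C}} \mathbf{a}(\nabla u) \cdot \frac{x}{|x|}\, d\sigma,
\]
so $\|f\|_{L^{1}(\mathcal{A}_R)} \leq C$ uniformly in $R$ since $f \in L^{1}$. The core step is then to establish the uniform oscillation bound $\operatorname{osc}_{\mathcal{A}_R} u \leq C$ for all $R \geq R_0$. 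For this, let $h_R$ be the $\mathbf{a}$-harmonic function on $\mathcal{A}_R$ sharing the Dirichlet data of $u$ on $\partial \mathcal{A}_R \cap \mathcal{C}$ and satisfying the Neumann condition on $\partial \mathcal{C}$; the Brezis--Merle inequality (Lemma \ref{le:1}) yields a uniform bound $\int_{\mathcal{A}_R} e^{\lambda|u - h_R|}\, dx \leq C|\mathcal{A}_R|$ for $\lambda$ independent of $R$, thanks to the uniform $L^{1}$ bound on $f$. Combining this with Serrin's local $L^{\infty}$ estimate (Lemma \ref{le:2}) on interior balls and its variant near $\partial \mathcal{C}$ (where the radial Poincar\'{e} inequality of Lemma \ref{A1} replaces the usual Sobolev embedding), one obtains $\|u - h_R\|_{L^{\infty}(\mathcal{A}_R')} \leq C$ on a slightly smaller annulus $\mathcal{A}_R' \Subset \mathcal{A}_R$. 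The scaling invariance of the $\mathbf{a}$-harmonic equation together with Harnack's inequality give $\operatorname{osc}_{\mathcal{A}_R'} h_R \leq C$, whence $\operatorname{osc}_{\mathcal{A}_R'} u \leq C$ uniformly in $R$.

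With this oscillation bound in hand, the upper bound follows by chaining across the overlapping dyadic annuli: $u(x) \leq u(x_0) + C\log(|x|/|x_0|)$, which combined with $u \geq 0$ yields $u(x) \leq L\log|x|$ for $|x|$ large. For the lower bound, I combine the oscillation bound with the hypothesis $u \to +\infty$, which forces $\inf_{\mathcal{A}_R} u \to +\infty$ as $R \to +\infty$; then I use an $\mathbf{a}$-harmonic barrier $\underline{u}(x) = c\log(|x|/R_0)$ constructed to lie below $u$ on $\partial B_{R_0} \cap \mathcal{C}$, and apply the weak comparison principle in $\mathcal{C} \setminus \overline{B_{R_0}}$ (with Neumann on $\partial \mathcal{C}$) to conclude $u \geq \underline{u}$ and hence $u(x) \geq \frac{1}{L}\log|x|$ for $|x|$ large.

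The hardest step will be the uniform oscillation bound, particularly its behavior near $\partial \mathcal{C}$: both the Brezis--Merle inequality and Serrin's estimate must be adapted to the mixed boundary conditions in the cone in a scale-invariant way, and the radial Poincar\'{e} inequality of Lemma \ref{A1} is the key tool making this possible without recourse to the De Giorgi--Nash--Moser framework.
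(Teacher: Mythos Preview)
Your proposal contains a genuine gap at the step you correctly flag as the hardest: the claim that ``scaling invariance of the $\mathbf{a}$-harmonic equation together with Harnack's inequality give $\operatorname{osc}_{\mathcal{A}_R'} h_R \leq C$'' is not justified and, as stated, is false. Harnack gives a \emph{multiplicative} bound $\sup_{\mathcal{A}_R'} h_R \leq C\inf_{\mathcal{A}_R'} h_R$, hence only $\operatorname{osc}_{\mathcal{A}_R'} h_R \leq (C-1)\inf_{\mathcal{A}_R'} h_R$. Since $u\to+\infty$ and $h_R$ shares Dirichlet data with $u$ on the spherical parts of $\partial\mathcal{A}_R$, the maximum principle forces $\inf_{\mathcal{A}_R'} h_R\to+\infty$ as $R\to\infty$, so this bound is useless. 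The example $h(x)=A\log|x|$ already shows that oscillation on a dyadic annulus is not uniformly bounded over all nonnegative $\mathbf{a}$-harmonic functions. Without the oscillation bound, your chaining argument for the upper bound collapses.

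The paper proceeds quite differently. Rather than attempting an oscillation bound, it adapts Serrin's classical argument from \cite{S2} directly to the cone. The key inputs are two one-sided estimates obtained by integral (Caccioppoli-type) methods: $m(\sigma):=\min_{|x|=\sigma,\,x\in\mathcal{C}}u\leq C_1\log\sigma$ and $M(\sigma):=\max_{|x|=\sigma,\,x\in\mathcal{C}}u\geq C_0\log\sigma$ (these are Serrin's (30) and (35) with $\beta=N$, verified to survive in the cone). Only \emph{then} does Harnack enter, to cross the two bounds: $M(\sigma)\leq C\,m(\sigma)\leq CC_1\log\sigma$ and $m(\sigma)\geq C^{-1}M(\sigma)\geq C^{-1}C_0\log\sigma$. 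The radial Poincar\'{e} inequality (Lemma~\ref{A1}) is used precisely to make Serrin's Harnack argument go through with the Neumann condition on $\partial\mathcal{C}$ (replacing the $W^{1,p}_0\hookrightarrow L^{Np/(N-p)}$ embedding in \cite[Lemma~5]{S}); the Brezis--Merle inequality is not used here at all. The inhomogeneous case $f\not\equiv0$ is then handled by the perturbation argument of \cite[Theorem~6.1]{CL2}. Your barrier for the lower bound is also problematic: $c\log(|x|/R_0)$ is in general neither $\mathbf{a}$-harmonic nor compatible with the Neumann condition for a general $\mathbf{a}$ satisfying only \eqref{eq-3}, and comparison in the unbounded region $\mathcal{C}\setminus\overline{B_{R_0}}$ would need control at infinity that you have not established.
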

\begin{proof}
First, in the particular homogeneous case $f\equiv0$, the logarithmic asymptotic estimate in \eqref{eq-5} can be deduced from (36) in \cite[Theorem 2]{S2} with $\beta=N$ in a similar way as the lower bound in (2.8) of \cite[Proposition 2.3]{CFR} (c.f. also pages 159-160 in \cite{VJ16}). In fact, due to the boundary condition $\left\langle{\bf{a}}(\nabla u),\nu\right\rangle=0$ on $\partial\mathcal{C}\setminus\overline{B_1(0)}$, by restricting the integration region to the convex cone $\mathcal{C}$ and changing $\omega_{N}=\mathcal{H}^{N-1}(\partial B_{1}(0))$ to $\omega_{N}^{\mathcal{C}}:=\mathcal{H}^{N-1}(\partial B_{1}(0)\cap\mathcal{C})$, Lemmas 8, 9 and 12 in \cite{S2} are still valid in convex cone $\mathcal{C}$, up to slightly corresponding modifications. Furthermore, the estimates (30) and (35) in \cite{S2} with $\beta=N$ still hold in $\mathcal{C}$, i.e., $m(\sigma):=\min\limits_{|x|=\sigma,\,x\in\mathcal{C}}u(x)\leq C_{1}\log |x|$ and $M(\sigma):=\max\limits_{|x|=\sigma,\,x\in\mathcal{C}}u(x)\geq C_{0}\log |x|$ for $\sigma$ large enough and some constant $C_{0},\,C_{1}>0$ independent of $\sigma$. Thus it follows from the Harnack inequality (c.f. e.g. \cite{S,S2}) that the logarithmic asymptotic behavior (36) in \cite[Theorem 2]{S2} with $\beta=N$ indeed holds for $u$ in $\mathcal{C}$, i.e., \eqref{eq-5} holds.

\smallskip

One should note that, in order to deal with the boundary condition $\left\langle{\bf{a}}(\nabla u),\nu\right\rangle=0$ on $\partial\mathcal{C}\setminus\overline{B_1(0)}$, in the proof of Harnack inequality in \cite{S}, the Sobolev embedding $W^{1,p}_{0}(\Omega)\hookrightarrow L^{\frac{Np}{N-p}}(\Omega)$ with $1\leq p<N$ in \cite[Lemma 5]{S} should be replaced by the radial Poincar\'{e} type inequality in Lemma \ref{A1} in Appendix \ref{appendix}, which has its own interests and importance. Indeed, the radial Poincar\'{e} type inequality in Lemma \ref{A1} implies that, for any $f\in W^{1,p}(\Omega)$ with $f=0$ on the (back) radial contact set $\Gamma^{+}_{P}\subset\partial\Omega$,
\begin{equation}\label{eq-6}
  \|f\|_{L^{\frac{Np}{N-p}}(\Omega)}\leq S(\Omega)\left(\|\nabla f\|_{L^{p}(\Omega)}+\|f\|_{L^{p}(\Omega)}\right)\leq S(\Omega)\left(1+d_{r,P}(\Omega)\right)\|\nabla f\|_{L^{p}(\Omega)},
\end{equation}
where $S(\Omega)$ is the Sobolev constant of the embedding $W^{1,p}(\Omega)\hookrightarrow L^{\frac{Np}{N-p}}(\Omega)$, and $d_{r,P}(\Omega)$ defined by \eqref{rw} denotes the radial width of $\Omega$ with respect to the radial center $P$. In particular, if $\Omega=\mathcal{C}\cap E$ with $E\subset\mathbb{R}^{N}$ be a bounded domain, then $P=0$ and $\Gamma^{+}_{0}\subset\partial E\cap\mathcal{C}$.

\smallskip

Then, we can extend the logarithmic asymptotic estimate \eqref{eq-5} in convex cone $\mathcal{C}$ from the homogeneous case $f\equiv0$ to the inhomogeneous case $f\not\equiv0$ satisfying \eqref{eq-4}, by using similar way as in the proof of Theorem 6.1 in \cite{CL2}. This completes the proof of Theorem \ref{le:1+}.
\end{proof}

\smallskip

For any open subset $D\subset\mathbb{R}^{N}$ and measurable set $E\subset\mathbb{R}^{N}$, the anisotropic perimeter of $E$ in $D$ with respect to the gauge $H$ is defined by
\begin{equation}
P_H(E;D)=\sup \left\{\int_E \operatorname{div} \Phi\,\mathrm{d} x\mid\, \Phi \in C_c^1\left(D ; \mathbb{R}^N\right), H_0(\Phi) \leq 1\right\}.
\end{equation}
In particular, if $E$ has locally finite perimeter, one has
\begin{equation}
P_H(E ; D)=\int_{D \cap \partial^* E} H(\nu) \mathrm{d}\mathcal{H}^{N-1},
\end{equation}
where $\partial^* E$ is the reduced boundary of $E$ and $\nu$ is the outer (measure theoretic) unit normal vector to $E$ defined on $\partial^* E$.

\smallskip

The anisotropic isoperimetric inequality inside convex cones was proved in \cite[Theorem 1.3]{CRS}. For the characterization of the equality case, refer to \cite[Theorem 4.2]{DPV} and \cite[Theorem 2.2]{FI} when $H$ is a norm, and \cite[Theorem 2.5]{CL1} when $H$ is a gauge. For (sharp) quantitative isoperimetric inequalities, see  e.g. \cite{FMP1,FMP0}.
\begin{thm}\label{thm1}
Let $\mathcal{C}=\mathbb{R}^k \times \widetilde{\mathcal{C}}$, where $k \in\{0, \ldots, N\}$ and $ \widetilde{\mathcal{C}} \subset \mathbb{R}^{N-k}$ is an open, convex
cone with vertex at the origin $0_{\mathbb{R}^{N-k}}$ which contains no lines. Let $H$ be a gauge in $\mathbb{R}^{N}$, which is positive on $\mathbb{S}^{N-1}$ and let $H_0$ be its dual function defined by \eqref{def:2.1}. Then for each measurable set $E\subset\mathbb{R}^{N}$ with $\mathcal{H}^N(\mathcal{C}\cap E)<+\infty$,
\begin{equation}
\frac{P_H(E;\mathcal{C})}{\mathcal{H}^N(\mathcal{C}\cap E)^{\frac{N-1}{N}}} \geq \frac{P_H\left(B_1^{H_0}; \mathcal{C}\right)}{\mathcal{H}^N\left(\mathcal{C} \cap B_1^{H_0}\right)^{\frac{N-1}{N}}}.
\end{equation}
Moreover, the equality sign holds if and only if $\mathcal{C} \cap E=\mathcal{C} \cap B_R^{H_0}\left(x_0\right)$ for some $R>0$ and $x_0 \in \mathbb{R}^k \times\left\{0_{\mathbb{R}^{N-k}}\right\}$, where the Wulff ball $B^{H_{0}}_{R}(x_{0}):=\{x\in\mathbb{R}^{N}\mid \, H_{0}(x-x_{0})<R\}$.
\end{thm}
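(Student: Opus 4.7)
The plan is to prove the sharp anisotropic isoperimetric inequality in cones by the ABP (Alexandroff--Bakelman--Pucci) method adapted to anisotropic perimeters and convex cones, in the spirit of Cabr\'{e}--Ros-Oton--Serra. After a standard mollification step that reduces to the case where $E$ is bounded and $\partial E\cap\mathcal{C}$ is smooth and meets $\partial\mathcal{C}$ transversally (so that the anisotropic perimeter passes to the limit by lower semicontinuity), I set $\Omega:=E\cap\mathcal{C}$ and solve the mixed Neumann problem
\begin{equation*}
\begin{cases}
\Delta u = c & \text{in } \Omega,\\
\partial_\nu u = H(\nu) & \text{on } \partial E\cap\mathcal{C},\\
\partial_\nu u = 0 & \text{on } E\cap\partial\mathcal{C},
\end{cases}\qquad c:=\frac{P_H(E;\mathcal{C})}{\mathcal{H}^N(\Omega)},
\end{equation*}
whose compatibility condition is satisfied by the divergence theorem. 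Standard regularity yields $u\in W^{2,q}_{\rm loc}(\Omega)\cap C^{1,\alpha}$ up to the smooth part of $\partial\Omega$, which is enough for the pointwise argument below.

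The ABP heart of the proof is the inclusion $\nabla u(\Gamma)\supset\mathrm{int}(\mathcal{C})\cap B_1^{H_0}$, where $\Gamma:=\{x\in\Omega:u(y)\ge u(x)+\nabla u(x)\cdot(y-x)\ \forall y\in\overline{\Omega}\}$ is the lower contact set. Given such $p$, I track the location of the minimum of $w(y):=u(y)-p\cdot y$ on $\overline{\Omega}$. A minimum on $\partial E\cap\mathcal{C}$ would force $p\cdot\nu\ge H(\nu)$, contradicting $p\cdot\nu\le H_0(p)H(\nu)<H(\nu)$; a minimum on $E\cap\partial\mathcal{C}$ would force $p\cdot\nu_{\mathcal{C}}\ge 0$, which contradicts $p\in\mathrm{int}(\mathcal{C})$, because outer normals to a convex cone with vertex at the origin lie in its polar cone. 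Thus the minimum lies in the interior, giving $\nabla u(x)=p$ and $D^2u(x)\ge 0$, whence $x\in\Gamma$. Since $D^2u\ge 0$ on $\Gamma$, AM--GM yields $\det D^2u\le (c/N)^N$, and the area formula gives
\begin{equation*}
\mathcal{H}^N(\mathcal{C}\cap B_1^{H_0})\le \int_{\Gamma}\det D^2u\,\mathrm{d}x\le\left(\frac{c}{N}\right)^N\mathcal{H}^N(\Omega).
\end{equation*}
Combining this with Euler's identity $P_H(B_1^{H_0};\mathcal{C})=N\,\mathcal{H}^N(\mathcal{C}\cap B_1^{H_0})$---obtained by integrating $\operatorname{div} x$ on $\mathcal{C}\cap B_1^{H_0}$, using $x\cdot\nu_{\mathcal{C}}=0$ on $\partial\mathcal{C}$ (cone with vertex at $0$) and $x\cdot\nu=H(\nu)$ on $\partial B_1^{H_0}$ (via Lemma~\ref{8,le:2.2})---and rearranging produces the sharp isoperimetric inequality.

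For rigidity, equality propagates through each step: AM--GM forces $D^2u\equiv (c/N)\,\mathrm{Id}$ a.e.\ on $\Gamma$ and the ABP inclusion becomes an equality up to null sets. Using connectedness of $\Omega$ and the $C^{1,\alpha}$-regularity of $u$, I would upgrade this to $u(x)=\tfrac{c}{2N}|x-x_0|^2+\text{const}$ throughout $\Omega$ for some $x_0\in\mathbb{R}^N$. The Neumann condition on $\partial\mathcal{C}$ reads $(x-x_0)\cdot\nu_{\mathcal{C}}=0$ on $E\cap\partial\mathcal{C}$; because $\widetilde{\mathcal{C}}$ contains no lines, the set of outer normals $\nu_{\mathcal{C}}$ spans the orthogonal complement of $\mathbb{R}^k\times\{0_{\mathbb{R}^{N-k}}\}$, forcing $x_0\in\mathbb{R}^k\times\{0_{\mathbb{R}^{N-k}}\}$. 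The other Neumann condition, $(c/N)(x-x_0)\cdot\nu=H(\nu)$ on $\partial E\cap\mathcal{C}$, then characterizes this part of $\partial E$ as the Wulff sphere $\{H_0(x-x_0)=N/c\}\cap\mathcal{C}$ through the duality formulas of Lemma~\ref{8,le:2.2}, so that $E\cap\mathcal{C}=B_{N/c}^{H_0}(x_0)\cap\mathcal{C}$.

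The principal obstacle is the ABP inclusion and, within it, the treatment of the cone boundary $\partial\mathcal{C}$: the homogeneous Neumann condition alone does not preclude minimizers on $\partial\mathcal{C}$, and the decisive input is the convexity of the cone, through the strict polarity $p\cdot\nu_{\mathcal{C}}<0$ for $p\in\mathrm{int}(\mathcal{C})$. A secondary technical point is the approximation in the first step, which must be engineered so that both the anisotropic perimeter and the volume converge simultaneously; this is handled by a mollification compatible with the cone, and---for the rigidity statement---by the more delicate arguments of De Philippis--Franzina--Velichkov and Figalli--Indrei cited in the paper.
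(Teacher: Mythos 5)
The paper itself contains no proof of Theorem \ref{thm1}: the inequality is imported from \cite[Theorem 1.3]{CRS} and the equality characterization from \cite{DPV,FI} (for norms) and \cite[Theorem 2.5]{CL1} (for gauges), so there is no internal argument to compare against. Your sketch reconstructs the ABP proof of \cite{CRS}, which is indeed the original route for the inequality, and its core steps are sound: the mixed Neumann problem with data $H(\nu)$ and $0$, the exclusion of minima of $w=u-p\cdot y$ on $\partial E\cap\mathcal{C}$ via $p\cdot\nu\le H_0(p)H(\nu)<H(\nu)$ and on $E\cap\partial\mathcal{C}$ via the strict polarity $p\cdot\nu<0$ for $p\in\mathrm{int}(\mathcal{C})$, the AM--GM/area-formula bound on the lower contact set, and the identity $P_H(B_1^{H_0};\mathcal{C})=N\,\mathcal{H}^N(\mathcal{C}\cap B_1^{H_0})$.

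Two points, however, are genuine gaps rather than routine details. First, your boundary-minimum dichotomy only covers smooth boundary points: the minimum of $w$ can a priori occur on the $(N-2)$-dimensional junction $\partial E\cap\partial\mathcal{C}$ or at a nonsmooth point of the merely Lipschitz cone boundary, where no outer normal exists and where solutions of mixed Neumann problems are in general not $C^1$; transversality of $\partial E$ does not remove this, and it is exactly what \cite{CRS} handle with their approximation of $\mathcal{C}$ by smooth convex epigraphs with uniform estimates (the same device this paper reuses in Lemma \ref{le:5.1} and Proposition \ref{Prop:5.3}). Second, the rigidity part does not follow by ``propagating equality'' through the ABP steps: for a general measurable $E$ the inequality is obtained via a smoothing limit in which equality is not inherited by the approximating sets, and even for smooth $E$ the conclusion $D^2u=(c/N)\,\mathrm{Id}$ a.e.\ on the contact set $\Gamma$ does not upgrade to $u$ being globally quadratic on $\Omega$ by connectedness alone, since $\Gamma$ may be a proper closed subset of $\Omega$ and equality in the inclusion $\nabla u(\Gamma)\supset\mathcal{C}\cap B_1^{H_0}$ holds only up to null sets. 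This is precisely why the equality case is not contained in \cite{CRS} and is settled by different arguments in \cite{FI,DPV} and, for gauges, in \cite[Theorem 2.5]{CL1}; your deferral to the literature is the correct resolution, but then the heuristic ``upgrade'' paragraph should be replaced by that citation (note also that the reference you name, De Philippis--Franzina--Velichkov, is not the \cite{DPV} of this paper, which is Dipierro--Poggesi--Valdinoci).
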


We also need the following generalized version of the divergence theorem from \cite{CL1}.
\begin{lem}\cite[Lemma 4.3]{CL1}\label{le:2.3}
   Let $\Omega$ be a bounded open subset of $\mathbb{R}^N$ with Lipschitz boundary and let $f\in L^1(\Omega)$. Assume that
   ${\bf{a}} \in C^0(\overline{\Omega};\mathbb{R}^N)$ satisfies $\operatorname{div} \, {\bf{a}}=f$ in the sense of distributions
   in $\Omega$. Then we have
   \begin{equation*}
    \int_{\partial\Omega}\left\langle {\bf{a}},\nu\right\rangle \mathrm{d}\mathcal{H}^{N-1}=\int_{\Omega}f(x)\mathrm{d}x.
   \end{equation*}
\end{lem}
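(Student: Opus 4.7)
The plan is to test the distributional identity $\operatorname{div}\mathbf{a}=f$ in $\Omega$ against a Lipschitz cutoff that approximates the indicator function of $\Omega$, and then to pass to the limit by combining the coarea formula with the continuity of $\mathbf{a}$ and the Lipschitz regularity of $\partial\Omega$.

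Let $d(x):=\operatorname{dist}(x,\partial\Omega)$, which is $1$-Lipschitz on $\overline{\Omega}$ with $|\nabla d|=1$ a.e.\ in $\Omega$. For small $\varepsilon>0$, I would introduce the Lipschitz cutoff
$$\phi_\varepsilon(x):=\min\{d(x)/\varepsilon,\,1\},$$
extended by $0$ outside $\overline{\Omega}$, so that $\phi_\varepsilon\in W_0^{1,\infty}(\Omega)$, $0\le\phi_\varepsilon\le 1$, $\phi_\varepsilon\nearrow 1$ pointwise in $\Omega$, and $\nabla\phi_\varepsilon=\varepsilon^{-1}\nabla d\cdot\mathbf{1}_{\{0<d<\varepsilon\}}$. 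Since $\mathbf{a}\in L^\infty(\overline{\Omega};\mathbb{R}^N)$ and $f\in L^1(\Omega)$, standard density of $C_c^\infty(\Omega)$ in $W_0^{1,1}(\Omega)$ (with uniform $L^\infty$-control preserved under truncation) allows me to plug $\phi_\varepsilon$ into the distributional identity and obtain
$$\int_\Omega \mathbf{a}\cdot\nabla\phi_\varepsilon\,dx=-\int_\Omega f\,\phi_\varepsilon\,dx.$$

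Now I would send $\varepsilon\to 0^+$. The right-hand side converges to $-\int_\Omega f\,dx$ by dominated convergence with dominant $|f|\in L^1(\Omega)$. For the left-hand side, the coarea formula together with $|\nabla d|=1$ rewrites it as
$$\int_\Omega \mathbf{a}\cdot\nabla\phi_\varepsilon\,dx=\frac{1}{\varepsilon}\int_0^\varepsilon\psi(t)\,dt,\qquad \psi(t):=\int_{\{x\in\Omega\,:\,d(x)=t\}}\mathbf{a}\cdot\nabla d\,d\mathcal{H}^{N-1}.$$
The main obstacle, and the only place the Lipschitz regularity of $\partial\Omega$ is used in force, is to show that $\psi(t)\to\psi(0):=-\int_{\partial\Omega}\mathbf{a}\cdot\nu\,d\mathcal{H}^{N-1}$ as $t\to 0^+$ (using $\nabla d=-\nu$ on $\partial\Omega$ in the $\mathcal{H}^{N-1}$-a.e.\ sense); Lebesgue differentiation then gives $\varepsilon^{-1}\int_0^\varepsilon\psi\,dt\to\psi(0)$. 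To establish the continuity at $t=0$, I would cover $\partial\Omega$ by finitely many bi-Lipschitz charts that flatten the boundary into Lipschitz graphs. In each chart the level set $\{d=t\}$ is a parallel Lipschitz graph which converges uniformly to the graph of $\partial\Omega$ as $t\to 0^+$, with Jacobians uniformly bounded and convergent a.e. Combined with the uniform continuity of $\mathbf{a}$ on the compact set $\overline{\Omega}$ and a partition of unity subordinate to the cover, a dominated-convergence argument in each chart yields $\psi(t)\to\psi(0)$. Equating the two limits gives $-\int_{\partial\Omega}\mathbf{a}\cdot\nu\,d\mathcal{H}^{N-1}=-\int_\Omega f\,dx$, which is the desired identity.
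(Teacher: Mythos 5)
Note first that the paper does not prove this lemma at all: it is quoted verbatim from \cite[Lemma 4.3]{CL1}, so your attempt can only be judged on its own merits. Your overall strategy (testing $\operatorname{div}\mathbf{a}=f$ against $\phi_\varepsilon=\min\{d/\varepsilon,1\}$ and using the coarea formula) is a legitimate known route, and the first two steps are fine, but the proof has a genuine gap exactly at its decisive point: the convergence of the boundary-layer flux to $-\int_{\partial\Omega}\langle\mathbf{a},\nu\rangle\,\mathrm{d}\mathcal{H}^{N-1}$. What you assert -- that after bi-Lipschitz flattening the level sets $\{d=t\}$ are ``parallel Lipschitz graphs'' converging to $\partial\Omega$ ``with Jacobians uniformly bounded and convergent a.e.'' -- is not a proof and is partly unfounded. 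A bi-Lipschitz chart does not send level sets of the distance function to parallel hyperplanes; even in a chart where $\Omega$ is the epigraph of a Lipschitz $g$, showing that $\{d=t\}$ is a graph requires an argument (vertical monotonicity of $d$), the level set may carry extra $\mathcal{H}^{N-1}$-mass coming from the ridge/cut locus of $d$ for particular values of $t$, the nearest-point projection onto a merely Lipschitz boundary is not Lipschitz (so the ``Jacobians'' of your comparison map are not obviously defined or bounded), and the a.e.\ identification of $\nabla d$ on $\{d=t\}$ with $-\nu$ on $\partial\Omega$ together with convergence of the surface measures is precisely the hard content of the lemma; asserting it amounts to assuming what is to be proved. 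Moreover, the pointwise continuity $\psi(t)\to\psi(0)$ you invoke is stronger than what is needed (only the averages $\varepsilon^{-1}\int_0^\varepsilon\psi(t)\,\mathrm{d}t$ enter) and is delicate for Lipschitz domains; the statement that actually has to be established is the weak-$*$ convergence of the measures $\varepsilon^{-1}\nabla d\,\mathbf{1}_{\{0<d<\varepsilon\}}\,\mathrm{d}x$ to $-\nu\,\mathrm{d}\mathcal{H}^{N-1}\llcorner\partial\Omega$, and no proof of this is given.

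A cleaner and complete route, which avoids any analysis of the level sets of $d$, is the following: extend $\mathbf{a}$ continuously to $\mathbb{R}^N$ (Tietze), mollify to get $\mathbf{a}_\delta\in C^\infty$ with $\mathbf{a}_\delta\to\mathbf{a}$ uniformly on $\overline{\Omega}$, and apply the classical Gauss--Green theorem (valid for $C^1$ fields on bounded Lipschitz domains) to $\mathbf{a}_\delta$. The boundary terms converge by uniform convergence. For the volume term, $\operatorname{div}\mathbf{a}_\delta=f\ast\rho_\delta$ on $\Omega_\delta:=\{d>\delta\}$, and $\int_{\Omega_\delta}f\ast\rho_\delta\,\mathrm{d}x\to\int_\Omega f\,\mathrm{d}x$ since $(f\mathbf{1}_\Omega)\ast\rho_\delta\to f\mathbf{1}_\Omega$ in $L^1$; on the layer $\Omega\setminus\Omega_\delta$ one has $|\operatorname{div}\mathbf{a}_\delta|\leq C\,\omega(\delta)/\delta$ with $\omega$ the modulus of continuity of the extension, while $\mathcal{L}^N(\Omega\setminus\Omega_\delta)\leq C\delta$ for a Lipschitz domain, so this contribution vanishes as $\delta\to0$. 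If you wish to keep your cutoff-and-coarea approach instead, you must supply a genuine proof of the weak-$*$ convergence of the boundary-layer measures described above (for instance via the local graph structure of $\partial\Omega$ and the two-sided bound $g+t\leq h_t\leq g+t\sqrt{1+L^2}$ for the level-set graphs), which is where all the work of the lemma lies.
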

The following two Lemmas on the comparison principle and Liouville type result in convex cones $\mathcal{C}$ will be used.
\begin{lem}\cite[Lemma 2.4]{CL1}\label{le:2.5}
    Let $\mathcal{C}\subset\mathbb{R}^{N}$ be an open, convex cone and $E\subset\mathbb{R}^{N}$ be a bounded domain such that $\mathcal{H}^{N-1}(\Gamma_0)>0$ where $\Gamma_0:=\mathcal{C}\cap \partial E$,  and assume  $\mathcal{C}\cap E$ is connected. Let the gauge $H$ be the same as in Lemma \ref{8,le:2.3}. Assume that $u,v\in W^{1,N}(\mathcal{C}\cap E)\cap C^{0}((\mathcal{C}\cap E)\cup \Gamma_0)$ satisfy
    \begin{equation*}
     \left\{
         \begin{aligned}
             &-\Delta ^{H}_{N}u\leq-\Delta ^{H}_{N}v \ &\rm{in}\,\, \mathcal{C}\cap E, \\
             &u\leq v\ &\rm{on}\,\, \Gamma_0,\\
             &\langle a(\nabla u),\nu\rangle=\langle a(\nabla v),\nu\rangle=0 \quad\,\, &\rm{on}\,\, \partial(\mathcal{C}\cap E)\setminus\Gamma_0.
             \end{aligned}
             \right.
    \end{equation*}
    Then $u\leq v$ in $\mathcal{C}\cap E$.
 \end{lem}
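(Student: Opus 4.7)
My strategy is the standard weak comparison argument: test the difference of the two weak equations with the nonnegative function $\varphi:=(u-v)^+\in W^{1,N}(\mathcal{C}\cap E)$, and use the strict monotonicity of $\xi\mapsto a(\xi)=H^{N-1}(\xi)\nabla H(\xi)$ supplied by Lemma~\ref{8,le:2.3}. The mixed boundary data interact in exactly the right way: the Dirichlet half gives $\varphi\equiv 0$ on $\Gamma_0$ in the sense of trace (since $u\le v$ there), while the Neumann half kills the boundary flux term in the integration by parts. This reduces the problem to showing that a nonnegative $W^{1,N}$ function whose gradient vanishes almost everywhere and whose trace vanishes on a positive-$\mathcal{H}^{N-1}$-measure portion of the boundary of a connected domain must be identically zero.

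\textbf{Execution.} First, I would record the weak formulations: for every $\psi\in W^{1,N}(\mathcal{C}\cap E)$ with $\psi=0$ on $\Gamma_0$,
\begin{equation*}
\int_{\mathcal{C}\cap E}\langle a(\nabla u),\nabla\psi\rangle\,\mathrm{d}x \le \int_{\mathcal{C}\cap E}\langle a(\nabla v),\nabla\psi\rangle\,\mathrm{d}x,
\end{equation*}
where the boundary flux on $\partial(\mathcal{C}\cap E)\setminus\Gamma_0$ vanishes by the Neumann hypothesis $\langle a(\nabla u),\nu\rangle=\langle a(\nabla v),\nu\rangle=0$, and the flux on $\Gamma_0$ vanishes because $\psi=0$ there. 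Choosing $\psi=\varphi=(u-v)^+$, which is an admissible test function because its trace on $\Gamma_0$ vanishes (as $u-v\le 0$ there by the continuity hypothesis on $(\mathcal{C}\cap E)\cup\Gamma_0$), I obtain
\begin{equation*}
\int_{\{u>v\}}\langle a(\nabla u)-a(\nabla v),\nabla u-\nabla v\rangle\,\mathrm{d}x \le 0.
\end{equation*}
Invoking the monotonicity estimate \eqref{aeq:2.4} from Lemma~\ref{8,le:2.3},
\begin{equation*}
\int_{\{u>v\}} c_1\bigl(|\nabla u|+|\nabla v|\bigr)^{N-2}|\nabla u-\nabla v|^2\,\mathrm{d}x \le 0,
\end{equation*}
which forces $\nabla u=\nabla v$ almost everywhere on $\{u>v\}$ (the degenerate locus where both gradients vanish also contributes $\nabla\varphi=0$ automatically). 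Hence $\nabla\varphi=0$ almost everywhere on $\mathcal{C}\cap E$.

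\textbf{Conclusion and main obstacle.} Since $\mathcal{C}\cap E$ is connected, $\varphi$ is constant. As its trace on $\Gamma_0$ is zero and $\mathcal{H}^{N-1}(\Gamma_0)>0$, this constant must vanish, giving $u\le v$ throughout $\mathcal{C}\cap E$. The main technical obstacle is the integration-by-parts step on the nonsmooth domain $\mathcal{C}\cap E$: its boundary has a singular set along $\partial\mathcal{C}\cap\partial E$ where the Neumann portion meets $\Gamma_0$, so one cannot directly invoke the classical Green identity. I would justify the formulation either by appealing to the generalized divergence theorem of Lemma~\ref{le:2.3} (noting $a(\nabla u),a(\nabla v)\in L^{N/(N-1)}$ with $L^1$ distributional divergences), or by regularizing $\varphi$ through a cut-off procedure away from the singular boundary set and passing to the limit; the uniform ellipticity of $H$ and the $W^{1,N}$-regularity of $u,v$ make this approximation routine. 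A secondary subtlety is the possible degeneracy of \eqref{aeq:2.4} when $\nabla u,\nabla v$ vanish simultaneously, but this only strengthens the conclusion $\nabla\varphi=0$ on the relevant set and poses no genuine difficulty.
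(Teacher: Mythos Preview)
The paper does not supply its own proof of this lemma; it is quoted verbatim from \cite[Lemma 2.4]{CL1} and used as a black box. Your argument is the standard weak comparison proof---test with $(u-v)^{+}$, invoke the monotonicity \eqref{aeq:2.4}, and conclude via connectedness and the positive-measure Dirichlet portion---and it is correct; this is exactly how such comparison principles are established in the cited reference.
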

\begin{lem}\cite[Lemma 3.2]{CL1}\label{le:2.8}
    Let $\mathcal{C}$, $H$ and $H_0$ be the same as in Theorem \ref{Th:1.1}. Let $\gamma\in\mathbb{R}$ be a constant. Assume that $G(x)\in W^{1,N}_{loc}(\overline{\mathcal{C}}\setminus\{0\})\cap L^{\infty}(\mathcal{C})$ and the function $\gamma\log \widehat{H}_0(x)+G(x)$ satisfies
    \begin{equation*}
        \left\{
            \begin{aligned}
                &\Delta ^{\widehat{H}}_{N}(\gamma\log \widehat{H}_0(x)+G(x))=0 \ &\rm{in}\ \mathcal{C}, \\
                &\langle a(\nabla(\gamma\log \widehat{H}_0(x)+G(x))),\nu\rangle=0\ &\rm{on}\ \partial\mathcal{C}\setminus\{0\}.
                \end{aligned}
                \right.
       \end{equation*}
       Then $G(x)$ is a constant function.
\end{lem}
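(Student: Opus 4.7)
The proof splits naturally into a verification step and a rigidity step. First, I would verify that the ``radial'' function $v_0(x):=\gamma\log\widehat H_0(x)$ itself already solves the same Finsler $N$-Laplace boundary value problem as $u:=v_0+G$. Using Lemma~\ref{8,le:2.2}, in particular the duality relation $x=\widehat H_0(x)\,\nabla\widehat H(\nabla\widehat H_0(x))$ and Euler's homogeneity identity $x\cdot\nabla\widehat H_0(x)=\widehat H_0(x)$, a direct computation gives
\[
\widehat H^{N-1}(\nabla v_0)\,\nabla\widehat H(\nabla v_0)=\gamma^{N-1}\,\frac{x}{\widehat H_0(x)^N},
\]
which is divergence free on $\mathbb R^N\setminus\{0\}$; moreover, since $\mathcal C$ is a cone the Euclidean position vector $x$ is tangent to $\partial\mathcal C$, so the Neumann flux $\langle a(\nabla v_0),\nu\rangle$ vanishes automatically on $\partial\mathcal C\setminus\{0\}$. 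Hence every translate $v_0+c$ ($c\in\mathbb R$) is a solution, and the lemma reduces to the rigidity statement: any solution of the form $v_0+G$ with $G\in L^\infty(\mathcal C)$ must have $G$ constant.

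For the rigidity step I would run an energy/monotonicity argument. Pick a cutoff $\phi_\rho$ depending only on $\widehat H_0$, with $\phi_\rho\equiv 1$ on the Wulff annulus $A_\rho:=\{1/\rho<\widehat H_0<\rho\}\cap\overline{\mathcal C}$, supported in $\{1/(2\rho)<\widehat H_0<2\rho\}\cap\overline{\mathcal C}$, and satisfying $|\nabla\phi_\rho|\lesssim 1/\widehat H_0$. The product $\phi_\rho G$ is an admissible test function; inserting it into the weak formulations of $\Delta^{\widehat H}_N u=0$ and $\Delta^{\widehat H}_N v_0=0$ and subtracting (the Neumann contributions on $\partial\mathcal C\setminus\{0\}$ cancel) yields
\[
\int_{\mathcal C}\phi_\rho\,\bigl\langle a(\nabla u)-a(\nabla v_0),\,\nabla u-\nabla v_0\bigr\rangle\,\mathrm dx=-\int_{\mathcal C}G\,\bigl\langle a(\nabla u)-a(\nabla v_0),\,\nabla\phi_\rho\bigr\rangle\,\mathrm dx.
\]
The monotonicity inequality \eqref{aeq:2.4} makes the left-hand side nonnegative. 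Combining the growth bound \eqref{aeq:2.5} with the gradient decay $|\nabla u|,|\nabla v_0|\lesssim 1/\widehat H_0(x)$, obtained from Theorem~\ref{le:1+} together with standard $C^{1,\theta}$ regularity up to $\partial\mathcal C\setminus\{0\}$, and with $\|G\|_\infty<\infty$, I would control the right-hand side, absorb a fraction into the left-hand side via a Cauchy--Schwarz/Young split, and send $\rho\to\infty$ (and $\rho\to 0^+$). The limiting identity forces $\int_{\mathcal C}\langle a(\nabla u)-a(\nabla v_0),\nabla u-\nabla v_0\rangle\,\mathrm dx=0$; strict monotonicity then gives $\nabla u\equiv\nabla v_0$ a.e.\ in $\mathcal C$, i.e.\ $\nabla G\equiv 0$, and since $\mathcal C$ is connected, $G$ is constant.

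The main obstacle lies in controlling the cutoff term on the right-hand side above. The naive pointwise bounds $|a(\nabla u)-a(\nabla v_0)|\lesssim\widehat H_0^{-(N-1)}$ and $|\nabla\phi_\rho|\lesssim\widehat H_0^{-1}$ contribute only an $O(1)$ amount on each of the two annular shells $\{1/(2\rho)<\widehat H_0<1/\rho\}$ and $\{\rho<\widehat H_0<2\rho\}$, matching rather than dominating the quadratic form on the left. The route around this is the Young splitting
\[
\Bigl|\int_{\mathcal C}G\langle a(\nabla u)-a(\nabla v_0),\nabla\phi_\rho\rangle\,\mathrm dx\Bigr|\le\varepsilon\!\int_{\mathcal C}\phi_\rho\langle a(\nabla u)-a(\nabla v_0),\nabla u-\nabla v_0\rangle\,\mathrm dx+C_\varepsilon\!\int_{\mathcal C}\frac{|\nabla\phi_\rho|^2}{\phi_\rho}(|\nabla u|+|\nabla v_0|)^{N-2}|G|^2\,\mathrm dx,
\]
whose first piece can be absorbed on the left, while the second must be squeezed along a dyadic sequence of radii. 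Achieving this squeeze requires a Harnack-type oscillation decay for $G$ on annular shells in $\mathcal C$, which I would extract from the comparison principle Lemma~\ref{le:2.5} (applied to $u$ against the family of translates $v_0+c$ on $A_{r,R}$) together with the radial Poincar\'e-type inequality Lemma~\ref{A1}. This is the most delicate ingredient and the one that genuinely uses the novel radial Poincar\'e inequality singled out by the authors.
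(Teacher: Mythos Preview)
The paper does not prove this lemma; it is quoted verbatim from \cite[Lemma 3.2]{CL1}, so there is no ``paper's own proof'' to compare against. Your verification that $v_0=\gamma\log\widehat H_0$ is itself an $\widehat H$-$N$-harmonic function with vanishing conormal on $\partial\mathcal C\setminus\{0\}$ is correct, and the energy identity you write down after testing with $\phi_\rho G$ and subtracting is also correct. You also correctly diagnose the crux of the matter: the shell contributions are only $O(1)$ under the natural pointwise bounds, so the passage $\rho\to\infty$ is genuinely borderline.

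However, your proposed remedy has a gap. First, a minor point: Theorem~\ref{le:1+} does not furnish the gradient bound $|\nabla u|\lesssim\widehat H_0^{-1}$ here, since its hypotheses (nonnegative $u$ tending to $+\infty$, $f\ge 0$ satisfying \eqref{eq-4}) are not part of Lemma~\ref{le:2.8}; the bound comes instead from rescaling $u_R(x)=u(Rx)-\gamma\log R$, which is $\widehat H$-$N$-harmonic and uniformly bounded on each annulus by the assumption $G\in L^\infty$, together with the $C^{1,\theta}$ estimates of \cite{D,T,L}. More seriously, your closing move---extracting a ``Harnack-type oscillation decay'' for $G$ from Lemma~\ref{le:2.5} and the radial Poincar\'e inequality Lemma~\ref{A1}---is not justified. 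Lemma~\ref{A1} is an $L^p$--$L^p$ inequality for functions vanishing on a boundary portion; it does not produce an $L^\infty$ oscillation decay for $G$ on annuli, and comparing $u$ on $A_{r,R}$ with the translates $v_0+c$ only reproduces the trivial bounds $\inf G\le G\le\sup G$. So the argument, as written, does not close. The standard way to rescue exactly your energy framework is to replace the linear cutoff by a \emph{logarithmic} one: take $\phi_\rho$ equal to $1$ on $\{1/\rho<\widehat H_0<\rho\}$, supported in $\{1/\rho^2<\widehat H_0<\rho^2\}$, and interpolating so that $|\nabla\phi_\rho|\lesssim(\widehat H_0\log\rho)^{-1}$. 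Then the remainder term after absorption is bounded by
\[
C\|G\|_\infty^2\int_{\{\rho<\widehat H_0<\rho^2\}\cap\mathcal C}(|\nabla u|+|\nabla v_0|)^{N-2}|\nabla\phi_\rho|^2\,\mathrm dx
\ \lesssim\ \frac{C}{(\log\rho)^2}\int_\rho^{\rho^2}\frac{\mathrm dt}{t}\ =\ \frac{C}{\log\rho}\ \longrightarrow\ 0,
\]
and similarly for the inner shell; since $|\nabla v_0|>0$ on $\mathcal C\setminus\{0\}$ the weight $(|\nabla u|+|\nabla v_0|)^{N-2}$ is strictly positive, and the limit forces $\nabla G\equiv 0$. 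This is the missing ingredient, not Lemma~\ref{A1}.
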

The following Lemma on a property of the trace of matrices will also be needed.
\begin{lem}\cite[Lemma 4.5]{AKM}\label{le:2.9}
    Let the matrix $A$ be symmetric with positive eigenvalues, and let $\lambda_{min}$ and $\lambda_{max}$ be its smallest and largest eigenvalue, respectively;
     let $B$ be a symmetric matrix. Then we have
     \begin{equation}
        \mathrm{trace}((AB)^2)\approx  \mathrm{trace}(AB(AB)^T),
     \end{equation}
     where the involved constants depend only on the ratio $\frac{\lambda_{max}}{\lambda_{min}}$ and $N$. In particular, we have
     \begin{equation}
        \mathrm{trace}(AB(AB)^T)\leq N\left(\frac{\lambda_{max}}{\lambda_{min}}\right) ^2  \mathrm{trace}((AB)^2).
     \end{equation}
\end{lem}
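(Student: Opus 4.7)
My plan is to reduce the statement to a clean entrywise computation using the spectral theorem for $A$, exploiting the key fact that neither trace functional sees $A$ and $B$ in a way that prefers any particular basis.

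First I would diagonalize $A$. Since $A$ is symmetric with positive eigenvalues $\lambda_1,\ldots,\lambda_N\in[\lambda_{min},\lambda_{max}]$, the spectral theorem yields an orthogonal $U$ with $A=U\Lambda U^{T}$, $\Lambda=\mathrm{diag}(\lambda_1,\ldots,\lambda_N)$. Setting $\widetilde{B}:=U^{T}BU$ (still symmetric) one has $AB=U(\Lambda\widetilde{B})U^{T}$, and both traces are invariant under orthogonal conjugation, so without loss of generality $A=\Lambda$ is diagonal and $B=(B_{ij})$ is any symmetric matrix.

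Next I would compute both sides entrywise. Because $(AB)_{ij}=\lambda_i B_{ij}$ and $(AB)^{T}=BA$ has entries $B_{ij}\lambda_j$, using $B_{ji}=B_{ij}$ I get the two identities
\begin{equation*}
\mathrm{trace}\bigl(AB(AB)^{T}\bigr)=\|AB\|_{F}^{2}=\sum_{i,j}\lambda_i^{2}\,B_{ij}^{2},\qquad \mathrm{trace}\bigl((AB)^{2}\bigr)=\sum_{i,j}\lambda_i\lambda_j\,B_{ij}^{2}.
\end{equation*}
Both sums are manifestly nonnegative and are weighted sums of the same nonnegative quantities $B_{ij}^{2}$ with weights $\lambda_i^{2}$ and $\lambda_i\lambda_j$, respectively. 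The elementary pointwise inequalities $\lambda_{min}^{2}\le\lambda_i\lambda_j\le\lambda_{max}^{2}$ and $\lambda_{min}^{2}\le\lambda_i^{2}\le\lambda_{max}^{2}$ then give $\lambda_i^{2}\le(\lambda_{max}/\lambda_{min})^{2}\lambda_i\lambda_j$ and, conversely, $\lambda_i\lambda_j\le(\lambda_{max}/\lambda_{min})^{2}\lambda_i^{2}$. Summing over $i,j$ produces the equivalence $\mathrm{trace}((AB)^{2})\approx\mathrm{trace}(AB(AB)^{T})$ with constants depending only on $\lambda_{max}/\lambda_{min}$, and in particular $\mathrm{trace}(AB(AB)^{T})\le(\lambda_{max}/\lambda_{min})^{2}\,\mathrm{trace}((AB)^{2})$; the extra factor $N$ in the stated bound is harmless slack.

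There is no serious obstacle here, this being a purely linear-algebraic statement. The only conceptual point worth flagging is that one should not try to simultaneously diagonalize $A$ and $B$ (which fails when $[A,B]\neq 0$); one only diagonalizes $A$ and uses the symmetry of $B$ to make the cross term $B_{ij}B_{ji}$ collapse to $B_{ij}^{2}$. Both positivity of $A$ (ensuring $\lambda_i\lambda_j\ge\lambda_{min}^{2}>0$, so the lower bound is nontrivial) and symmetry of $B$ are essential; dropping either allows $\mathrm{trace}((AB)^{2})$ to vanish while $\mathrm{trace}(AB(AB)^{T})$ stays positive.
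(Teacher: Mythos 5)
Your argument is correct. Note that the paper does not prove this lemma at all: it is quoted verbatim from \cite[Lemma 4.5]{AKM}, so there is no in-paper proof to match; what you have supplied is a self-contained elementary justification. Your route is the natural one: by orthogonal invariance of both trace functionals you may assume $A=\mathrm{diag}(\lambda_1,\dots,\lambda_N)$, and then the identities $\mathrm{trace}(AB(AB)^T)=\sum_{i,j}\lambda_i^2B_{ij}^2$ and $\mathrm{trace}((AB)^2)=\sum_{i,j}\lambda_i\lambda_jB_{ij}^2$ (the latter using $B_{ji}=B_{ij}$) reduce everything to comparing the weights $\lambda_i^2$ and $\lambda_i\lambda_j$, which differ by a factor at most $\lambda_{max}/\lambda_{min}$. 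In fact your computation yields the two-sided bound with constant $\lambda_{max}/\lambda_{min}$ (writing $\lambda_i^2=\lambda_i\lambda_j\cdot\lambda_i/\lambda_j$), so the stated constant $N(\lambda_{max}/\lambda_{min})^2$ is, as you observe, slack; this is consistent with, and slightly sharper than, what the lemma asserts. Your closing remarks are also sound: $\mathrm{trace}((AB)^2)=\sum_{i,j}\lambda_i\lambda_jB_{ij}^2\geq0$ precisely because the eigenvalues are positive and $B$ is symmetric, and one should indeed only diagonalize $A$, not attempt a simultaneous diagonalization. No gaps.
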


\section{Sharp asymptotic estimates at infinity on $u$ and $\nabla u$}
In this section, we will prove sharp asymptotic estimates on both $u$ and $\nabla u$ at infinity in cones. To this end, we will first show the following upper bound on $u$.
\begin{prop}\label{prop:3.1}
    Let $\mathcal{C}$ be a convex cone and $u$ be a solution of
    \begin{equation}\label{eq:3.1}
        \left\{
        \begin{aligned}
        &-\Delta ^{H}_{N}u=e^u \qquad &\rm{in}\,\, \mathcal{C}, \\
        &a(\nabla u)\cdot \nu =0\qquad &\rm{on}\,\, \partial\mathcal{C},\\
        &\int_{\mathcal{C}}e^u\mathrm{d}x<+\infty.
        \end{aligned}
        \right.
    \end{equation}
Then $u^{+}\in L^{\infty}(\mathcal{C})$ and $u\in C^{1,\theta}(\mathcal{C})$ for some $\theta\in(0,1)$. Moreover, there exists a constant $C>0$
such that
\begin{equation}\label{prop:3.2}
    u(x)\leq C-N\log|x|,\qquad \forall \,\,  x\in\mathcal{C}.
\end{equation}
\end{prop}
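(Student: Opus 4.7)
The plan is to prove the three assertions in order: the global $L^\infty$ upper bound on $u^+$, the $C^{1,\theta}$ regularity, and the logarithmic decay $u(x)\le C-N\log|x|$.

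For $u^+\in L^\infty(\mathcal{C})$ I would combine the Brezis--Merle exponential inequality (Lemma \ref{le:1}, together with its mixed-boundary analogue from Lemma \ref{le:1+a}/Remark \ref{rem-a} when $x_0\in\partial\mathcal{C}$) with Serrin's local $L^\infty$ estimate (Lemma \ref{le:2}). The finite mass $\int_{\mathcal{C}}e^u<+\infty$ lets me pick, for each $x_0\in\overline{\mathcal{C}}$, a radius $r$ such that $\|e^u\|_{L^1(B_{2r}(x_0)\cap\mathcal{C})}$ is below the Brezis--Merle threshold. Writing $u=(u-h)+h$ with $h$ the $\Delta^H_N$-harmonic replacement (locally bounded above by the weak maximum principle), Lemma \ref{le:1} gives $e^{\lambda u}\in L^1_{\text{loc}}$ for some $\lambda>1$, hence $e^u\in L^{N/(N-\varepsilon)}_{\text{loc}}$ for small $\varepsilon\in(0,1)$, and Lemma \ref{le:2} then produces a local upper bound on $u^+$. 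The smallness of $\|e^u\|_{L^1(\mathcal{C}\setminus B_R(0))}$ as $R\to\infty$ lets me take $r$ uniformly for $|x_0|$ large, and a finite covering of the compact part promotes this to the global bound $u^+\in L^\infty(\mathcal{C})$. Once $u\le C$, the right-hand side $e^u\in L^\infty(\mathcal{C})$, so the standard interior and boundary $C^{1,\theta}$ theory for quasilinear $N$-Laplacian type equations (\cite{D,S2,T,L}), together with the ellipticity from Lemma \ref{8,le:2.3}, yields $u\in C^{1,\theta}_{\text{loc}}(\mathcal{C})$ for some $\theta\in(0,1)$.

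The decay $u(x)\le C-N\log|x|$ is the main content. My strategy is to recast it in the framework of Theorem \ref{le:1+}: setting $w:=-u$ and $\mathbf c(\xi):=-H^{N-1}(-\xi)\nabla H(-\xi)$, an Euler-homogeneity computation shows that $\mathbf c$ satisfies the structural conditions \eqref{eq-3} and that $w$ solves $\operatorname{div}\mathbf c(\nabla w)=e^{-w}$ in $\mathcal{C}$ with $\mathbf c(\nabla w)\cdot\nu=0$ on $\partial\mathcal{C}$. The divergence theorem (Lemma \ref{le:2.3}) applied on $B_R(0)\cap\mathcal{C}$, together with the Neumann condition, gives the flux identity
\begin{equation*}
\int_{B_R(0)\cap\mathcal{C}}e^u\,dx=-\int_{\partial B_R(0)\cap\mathcal{C}}\langle a(\nabla u),\nu\rangle\,d\mathcal{H}^{N-1}\longrightarrow\int_{\mathcal{C}}e^u\,dx<+\infty.
\end{equation*}
A Brezis--Merle/Serrin bootstrap on the annular regions $(B_{2R}\setminus B_R)\cap\mathcal{C}$, whose exponential masses $\int_{(B_{2R}\setminus B_R)\cap\mathcal{C}}e^u\,dx$ vanish as $R\to\infty$ by the finite-mass hypothesis, combined with the limiting argument of \cite{CL2}, should then produce the preliminary pointwise bound $e^u\le C|x|^{-N}$ that verifies the growth condition \eqref{eq-4} for the right-hand side $e^{-w}$. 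Once this is in place, Theorem \ref{le:1+} applies to $w$ and yields the desired estimate $u(x)\le C-N\log|x|$ after matching constants and using the equivalence $\widehat H_0\approx|\cdot|$ from \eqref{norm0}.

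The main obstacle I foresee is calibrating the exponent $N$ in the preliminary pointwise bound that must be fed into Theorem \ref{le:1+}. A tempting candidate barrier is $V(x):=-N\log\widehat{H}_0(x)$, which by Lemma \ref{8,le:2.2}, the $1$-homogeneity of $\widehat{H}_0$, and the fact that $x\cdot\nu=0$ on $\partial\mathcal{C}$ can be checked to be $\Delta^H_N$-harmonic on $\mathcal{C}\setminus\{0\}$ and to satisfy $\langle a(\nabla V),\nu\rangle=0$ on $\partial\mathcal{C}\setminus\{0\}$; however, because $-\Delta^H_N u=e^u\ge 0$, the function $V$ is only a subsolution of the full equation, so the comparison principle of Lemma \ref{le:2.5} cannot be invoked directly to conclude $u\le V+C$. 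The precise exponent $N$ must therefore be extracted through the quantitative interplay of the flux identity, the $L^1$ tail of $e^u$, and scaling-invariant Brezis--Merle/Serrin estimates on annular regions of vanishing mass. The first two assertions are, by contrast, routine consequences of Lemmas \ref{le:1}, \ref{le:2} and the standard regularity theory.
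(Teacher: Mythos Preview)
Your treatment of $u^+\in L^\infty(\mathcal{C})$ and the $C^{1,\theta}$ regularity is correct and matches the paper's argument.

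For the decay $u(x)\le C-N\log|x|$, your plan contains a logical loop and a misplaced tool. The ``preliminary pointwise bound'' $e^u\le C|x|^{-N}$ that you propose to establish in order to verify hypothesis \eqref{eq-4} is \emph{equivalent} to $u\le C-N\log|x|$, the very conclusion of the proposition; once you have it there is nothing left to prove, and invoking Theorem \ref{le:1+} afterwards is redundant. Worse, Theorem \ref{le:1+} cannot deliver the sharp exponent $N$ in any case: it only yields $\frac{1}{L}\log|x|\le w\le L\log|x|$ for some $L>1$, which for $w=-u+{\rm const}$ translates at best into $u\le C-\frac{1}{L}\log|x|$. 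In the paper, Theorem \ref{le:1+} enters only later, in Proposition \ref{Prop:3.2}, \emph{after} \eqref{prop:3.2} is already in hand, and there it supplies the complementary upper bound $\widehat u\le L\log|x|$ on $\widehat u=U_0-u$.

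The paper extracts the exponent $N$ directly from the scaling invariance of the problem, which is the precise mechanism your annular bootstrap is gesturing toward but never makes explicit. Since $R\mathcal{C}=\mathcal{C}$ for all $R>0$, the function $u_R(x):=u(Rx)+N\log R$ is again a solution of \eqref{eq:3.1} with the \emph{same} total mass $\int_{\mathcal{C}}e^{u_R}\,\mathrm{d}x=\int_{\mathcal{C}}e^{u}\,\mathrm{d}x$. One then reruns the Brezis--Merle/Serrin argument on fixed balls $B_{1/2}(x_0)$ with $x_0\in\mathbb{S}^{N-1}\cap\mathcal{C}$: because $\int_{B_{1/2}(x_0)\cap\mathcal{C}}e^{u_R}\,\mathrm{d}x\le\int_{\mathcal{C}\setminus B_{R/2}(0)}e^{u}\,\mathrm{d}x\to 0$, the smallness condition \eqref{ineq:3.5} holds uniformly for large $R$, and the resulting uniform bound $\|u_R^+\|_{L^\infty(\mathbb{S}^{N-1}\cap\mathcal{C})}\le C$ unwinds to $u(x)+N\log|x|\le C$. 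The exponent $N$ is forced by the scaling that leaves $-\Delta^H_N u=e^u$ invariant; no barrier comparison, no flux identity, and no appeal to Theorem \ref{le:1+} are needed at this stage.
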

\begin{proof}
 For $\overline{x}\in\mathcal{C}$ and $0<r<1$, let $h\in W^{1,N}(B_r(\overline{x})\cap\mathcal{C})$ be the weak
 solution of the equation
 \begin{equation*}
    \left\{
        \begin{aligned}
        &-\Delta ^{H}_{N}h=0 \qquad &{\rm in}\,\, B_r(\overline{x})\cap\mathcal{C}, \\
        &h=u\qquad &{\rm{on}} \,\, \partial(B_r(\overline{x})\cap\mathcal{C}).
        \end{aligned}
        \right.
 \end{equation*}
 From comparison principle, we get $h\leq u$ in $B_r(\overline{x})\cap\mathcal{C}$. Then we have
 \begin{equation}\label{ineq:3.3}
    \int_{B_r(\overline{x})\cap\mathcal{C}}(h^+)^N\mathrm{d}x\leq \int_{B_r(\overline{x})\cap\mathcal{C}}(u^+)^N\mathrm{d}x
    \leq N!\int_{\mathcal{C}}e^u\mathrm{d}x.
 \end{equation}
 Applying Lemma \ref{le:2} to $h^+$, we deduce that
 \begin{equation}\label{ineq:3.4}
    \|h^+\|_{L^{\infty}\left(B_{\frac{r}{2}}(\overline{x})\cap\mathcal{C}\right)}\leq C(r)
 \end{equation}
 for some constant $C(r)>0$. Now, let $r>0$ small enough such that
 \begin{equation}\label{ineq:3.5}
    \left(\int_{B_{r}(\overline{x})\cap\mathcal{C}}e^u\mathrm{d}x\right)^{\frac{1}{N-1}}\leq \frac{\mu(N-1)}{2N},
 \end{equation}
 where $\mu$ is the same as in Lemma \ref{le:1}. Choosing $\lambda=\frac{N}{N-1}$ in Lemma \ref{le:1}, we obtain that
 \begin{equation}\label{ineq:3.6}
    \int_{B_{r}(\overline{x})\cap\mathcal{C}}e^{\frac{N}{N-1}(u-h)}\mathrm{d}x\leq C(r).
 \end{equation}
 Then, by \eqref{ineq:3.4} and \eqref{ineq:3.6}, we have
 \begin{equation}\label{ineq:3.7}
    \int_{B_{\frac{r}{2}}(\overline{x})\cap\mathcal{C}}e^{\frac{N}{N-1}u}\mathrm{d}x=\int_{B_{\frac{r}{2}}(\overline{x})\cap\mathcal{C}}e^{\frac{N}{N-1}(u-h)+\frac{N}{N-1}h}\mathrm{d}x
    \leq C(r)e^{C(r)}.
 \end{equation}
Using Lemma \ref{le:2} in conjunction with \eqref{ineq:3.3} and \eqref{ineq:3.7}, we obtain
\begin{equation}\label{ineq:3.8}
    \|u^+\|_{L^{\infty}\left(B_{\frac{r}{4}}(\overline{x})\cap\mathcal{C}\right)}\leq C(r).
\end{equation}
Since $\int_{\mathcal{C}}e^u\mathrm{d}x<+\infty$, there exists a $R>0$ large enough such that
\begin{equation}\label{ineq:3.9}
    \int_{\mathcal{C}\setminus B_{R}(0)}e^u\mathrm{d}x\leq\left[\frac{\mu(N-1)}{2N}\right]^{N-1}.
\end{equation}

For every $\overline{x}\in\mathcal{C}\setminus \overline{B_{R+1}(0)}$, we know that $B_1(\overline{x})\cap\mathcal{C}\subset\mathcal{C}\setminus B_{R}(0)$. By \eqref{ineq:3.9}, we get the validity of \eqref{ineq:3.5} with $r=1$. Hence, by Lemmas \ref{le:1} and \ref{le:2}, we have
\begin{equation}\label{ineq:3.10}
    \|u^+\|_{L^{\infty}(\mathcal{C}\setminus \overline{B_{R+1}(0)})}\leq C(1).
\end{equation}
By the compactness of $\overline{C\cap B_{R+1}}$, there exist a finite number of points $x_i$ and $r_{x_i}>0$ small enough
$(i=1,\cdots, L)$ fulfilling \eqref{ineq:3.5} so that
\begin{equation}
    \overline{C\cap B_{R+1}}\subset\bigcup_{i=1}^{L}B_{\frac{r_{x_i}}{4}}(x_i).
\end{equation}
Thus by \eqref{ineq:3.8}, we have
\begin{equation}\label{ineq:3.12}
    \|u^+\|_{L^{\infty}(\mathcal{C}\cap\overline{B_{R+1}})}\leq \max_{1\leq i\leq L}C(r_{x_i})<+\infty.
\end{equation}
Combining \eqref{ineq:3.10} with \eqref{ineq:3.12}, we have $u^+\in L^{\infty}(\mathcal{C})$, and hence $e^u\in L^{\infty}(\mathcal{C})$. Since $u\in L^N_{loc}(\mathcal{C})$, by the interior regularity results in \cite{D,S2,T}, we
 deduce that $u\in C^{1,\theta}_{loc}(\mathcal{C})$ for some $\theta\in(0,1)$.

\smallskip

Now we prove the property \eqref{prop:3.2}. For $R>0$ and $x\in\mathcal{C}$, let $u_R(x)=u(Rx)+N\log R$. Note that the convex cone $\mathcal{C}$ satisfies $R\mathcal{C}=\mathcal{C}$ for any $R>0$, we know that $u_R$ is well defined on $\mathcal{C}$. Since $u$ satisfies the equation \eqref{eq:3.1}, it follows that $u_R$ is also a solution of \eqref{eq:3.1}. Given $x_0\in \mathbb{S}^{N-1}\cap\mathcal{C}$,
 let $h_R\in W^{1,N}(B_{\frac{1}{2}}(x_0)\cap\mathcal{C})$ be the weak solution of
 \begin{equation*}
    \left\{
        \begin{aligned}
        &-\Delta ^{H}_{N}h_R=0 \qquad &{\rm in}\,\, B_{\frac{1}{2}}(x_0)\cap\mathcal{C}, \\
        &h_R=u_R\qquad &{\rm{on}}\,\, \partial\left(B_{\frac{1}{2}}(x_0)\cap\mathcal{C}\right).
        \end{aligned}
        \right.
 \end{equation*}
Similar to \eqref{ineq:3.3}, we obtain that
 \begin{equation*}
    \int_{B_{\frac{1}{2}}(x_0)\cap\mathcal{C}}(h_R^+)^N\mathrm{d}x\leq \int_{B_{\frac{1}{2}}(x_0)\cap\mathcal{C}}(u_R^+)^N\mathrm{d}x
    \leq N!\int_{\mathcal{C}}e^{u_R}\mathrm{d}x=N!\int_{\mathcal{C}}e^{u}\mathrm{d}x.
 \end{equation*}
 Hence by Lemma \ref{le:2}, there exists a positive constant $C$ independent of $R$ and $x_0$ such that
 \begin{equation*}
    \|h^+_R\|_{L^{\infty}\left(B_{\frac{1}{2}}(x_0)\cap\mathcal{C}\right)}\leq C.
 \end{equation*}
  \par Since
  \begin{equation*}
    \int_{B_{\frac{1}{2}}(x_0)\cap\mathcal{C}}e^{u_R}\mathrm{d}x\leq \int_{\mathcal{C}\setminus B_{\frac{1}{2}}(0)}e^{u_R}\mathrm{d}x
    \leq \int_{\mathcal{C}\setminus B_{\frac{R}{2}}(0)}e^{u}\mathrm{d}x,
  \end{equation*}
  we can choose $R_0$ large enough such that for any $R>R_0$, there holds
  \begin{equation*}
    \int_{B_{\frac{1}{2}}(x_0)\cap\mathcal{C}}e^{u_R}\mathrm{d}x\leq \int_{\mathcal{C}\setminus B_{\frac{R}{2}}(0)}e^{u}\mathrm{d}x\leq \left[\frac{\mu(N-1)}{2N}\right]^{N-1}.
  \end{equation*}
  Then, by the argument in \eqref{ineq:3.6}-\eqref{ineq:3.8}, we obtain
  \begin{equation*}
    \|u_R^+\|_{L^{\infty}\left(B_{\frac{1}{8}}(x_0)\cap\mathcal{C}\right)}\leq C
  \end{equation*}
  for any $R>R_0$ and constant $C$ independent of $R$ and of $x_0$. Noting that $\mathbb{S}^{N-1}\cap\mathcal{C}$ can be covered by a finite number of sets
   $B_{\frac{1}{8}}(x_0)\cap \mathcal{C}$, $x_0\in\mathbb{S}^{N-1}$, thus we get
   \begin{equation}\label{ineq:3.13}
    \|u^+_R\|_{L^{\infty}(\mathbb{S}^{N-1}\cap \mathcal{C})}\leq C.
   \end{equation}
By \eqref{ineq:3.13}, for all $x\in\mathcal{C}\setminus \overline{B_{R_0}(0)}$, we have
\begin{equation}\label{ineq:3.14}
    u(x)+N\log|x|\leq C.
\end{equation}
Since \eqref{ineq:3.14} is valid for any $x\in\mathcal{C}\cap\overline{B_{R_0}(0)}$, thus we have proved \eqref{prop:3.2} and Proposition \ref{prop:3.1}.
\end{proof}

Based on Proposition \ref{prop:3.1}, we are to prove sharp asymptotic estimates on $u$ and $\nabla u$ at infinity in cones.
\begin{prop}\label{Prop:3.2}
    Let $u$ be a solution of \eqref{eq:3.1}, then there exist $R_0>0$ and $\beta>N$ such that
    \begin{equation}\label{eq:3.15}
        u(x)+\beta\log \widehat{H}_0(x)\in L^{\infty}(\overline{\mathcal{C}}\setminus B^{\widehat{H}_{0}}_{R_0}(0)),
    \end{equation}
    and
    \begin{equation}\label{eq:3.16}
        \lim_{x\in\mathcal{C}, \, |x|\rightarrow+\infty}|x|\left\lvert \nabla (u(x)+\beta\log\widehat{H}_0(x) )\right\rvert=0.
    \end{equation}
\end{prop}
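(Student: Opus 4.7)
My approach adapts that of Esposito \cite{E} and Ciraolo--Li \cite{CL2} to the conic setting, combining (i) the logarithmic asymptotic Theorem \ref{le:1+}, (ii) a blow--down compactness argument in $C^{1}_{\rm loc}$, and (iii) the Liouville--type rigidity Lemma \ref{le:2.8}. The first step reduces to Theorem \ref{le:1+}. By Proposition \ref{prop:3.1}, $e^{u}\le C|x|^{-N}$, so I set $v:=-u-C_{0}$ with $C_{0}$ large enough that $v\ge 0$ outside a large ball and $v(x)\to+\infty$. Using $\widehat H(\xi)=H(-\xi)$ and $\widehat a(\xi):=\widehat H^{N-1}(\xi)\nabla\widehat H(\xi)=-a(-\xi)$,
\[
\operatorname{div}\bigl(\widehat a(\nabla v)\bigr)=-\operatorname{div}\bigl(a(\nabla u)\bigr)=e^{u}\ge 0,\qquad \widehat a(\nabla v)\cdot\nu=0\text{ on }\partial\mathcal C,
\]
with $\widehat a$ satisfying \eqref{eq-3} and $f:=e^{u}$ satisfying \eqref{eq-4}. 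Theorem \ref{le:1+} yields $\tfrac{1}{L}\log|x|\le v(x)\le L\log|x|$ for $|x|$ large, and by \eqref{norm0}, $-L'\log\widehat H_{0}(x)\le u(x)\le-\tfrac{1}{L'}\log\widehat H_{0}(x)$.

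Next, I sharpen the bound via blow--down. By Lemma \ref{8,le:2.2}, $-\beta\log\widehat H_{0}$ is $H$--$N$--harmonic in $\mathcal C\setminus\{0\}$ with zero Neumann data for every $\beta\in\mathbb R$. Set $u_{R}(x):=u(Rx)+\beta\log R$, so that $-\Delta^{H}_{N}u_{R}=R^{N-\beta}e^{u_{R}}$; when $\beta>N$ the source tends to $0$ in $L^{1}_{\rm loc}(\overline{\mathcal C}\setminus\{0\})$. Step 1 gives uniform two--sided log bounds on $u_{R}$ on compact subsets of $\overline{\mathcal C}\setminus\{0\}$; the $C^{1,\theta}$ interior regularity of \cite{D,T} and Lieberman's boundary $C^{1,\theta}$ estimates \cite{L} on the flat pieces of $\partial\mathcal C$ produce a subsequence $u_{R_{k}}\to u_{\infty}$ in $C^{1}_{\rm loc}(\overline{\mathcal C}\setminus\{0\})$ with $-\Delta^{H}_{N}u_{\infty}=0$ in $\mathcal C\setminus\{0\}$ and zero Neumann data. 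The relation $u_{R}(\lambda x)=u_{\lambda R}(x)-\beta\log\lambda$ forces $u_{\infty}(\lambda x)=u_{\infty}(x)-\beta\log\lambda$, so $u_{\infty}=-\beta\log\widehat H_{0}+G$ with $G\in L^{\infty}$ and $0$--homogeneous; applying Lemma \ref{le:2.8} to $v_{\infty}:=-u_{\infty}=\beta\log\widehat H_{0}-G$ (which is $\widehat H$--$N$--harmonic because $\widehat a(\xi)=-a(-\xi)$) forces $G\equiv\mathrm{const}$, so $u_{\infty}=-\beta\log\widehat H_{0}+C_{1}$. The flux identity
\[
-\int_{\partial B^{\widehat H_{0}}_{R}\cap\mathcal C}a(\nabla u)\cdot\nu\,d\mathcal H^{N-1}=\int_{B^{\widehat H_{0}}_{R}\cap\mathcal C}e^{u}\,dx\to m:=\int_{\mathcal C}e^{u}\,dx,
\]
matched with the explicit flux of $u_{\infty}$, determines $\beta$ through $\beta^{N-1}=c(H,\mathcal C)\,m$; the anisotropic isoperimetric inequality (Theorem \ref{thm1}) together with the non--triviality of $u$ then force $\beta>N$.

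Finally, since $\beta$ is determined by the mass and is therefore independent of the extracted subsequence, a comparison argument with the barriers $-\beta\log\widehat H_{0}+c$ in the exterior cone upgrades subsequential to full--net $C^{1}_{\rm loc}$ convergence with a common constant $C_{1}$. Evaluating $u_{R}(x_{0})$ at $x_{0}\in\partial B^{\widehat H_{0}}_{1}\cap\overline{\mathcal C}$ and using $\widehat H_{0}(Rx_{0})=R$ gives \eqref{eq:3.15} from the $C^{0}$ convergence, while $\nabla u_{R}(x_{0})=R\nabla u(Rx_{0})\to-\beta\nabla\widehat H_{0}(x_{0})$ rewrites as $|x|\,\nabla(u+\beta\log\widehat H_{0})(x)\to 0$, giving \eqref{eq:3.16}. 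I expect the main difficulty to be the blow--down step itself: the rescaling is a priori circular ($\beta$ enters the definition but is identified only in the limit), to be disentangled by a barrier iteration against the family $-\beta\log\widehat H_{0}$, and securing uniform $C^{1,\theta}$ compactness up to $\partial\mathcal C$ (only Lipschitz at the edges and vertex of the cone) requires the \emph{approximation in cones} technique of \cite{CRS,CFR}; moving--plane or Kelvin inversions are unavailable here because of the anisotropy and the conic geometry.
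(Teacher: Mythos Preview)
Your outline captures the right ingredients (Theorem \ref{le:1+}, blow--down compactness, Lemma \ref{le:2.8}), but the blow--down step as written has a genuine circularity that you flag yourself and do not actually resolve. With your scaling $u_{R}(x)=u(Rx)+\beta\log R$, the logarithmic bounds from Step~1 give only
\[
(\beta-L')\log R - L'\log\widehat H_{0}(x)\le u_{R}(x)\le\bigl(\beta-\tfrac{1}{L'}\bigr)\log R - \tfrac{1}{L'}\log\widehat H_{0}(x),
\]
which is \emph{not} uniformly bounded in $R$ on compact sets unless $\beta$ already equals the correct value. So the compactness you invoke is unavailable, and the subsequent homogeneity argument (which further needs full, not subsequential, convergence of $u_{R}$) does not get off the ground. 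The ``barrier iteration'' you propose to break the circle is not spelled out; note also that the pure barriers $-\beta\log\widehat H_{0}+c$ are $H$--$N$--harmonic, hence cannot serve as supersolutions for $-\Delta^{H}_{N}u=e^{u}>0$ without modification.

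The paper sidesteps all of this by running \emph{two} blow--downs. First it rescales as $u_{R}(x):=\dfrac{u(Rx)-\mu_{0}}{\log R}$, which is automatically bounded in $L^{\infty}_{\rm loc}(\overline{\mathcal C}\setminus\{0\})$ by Step~1; the $C^{1,\theta}$ limit $u_{\infty}$ is then a \emph{bounded} solution of \eqref{eq-u_infty}, and Lemma \ref{le:2.8} with $\gamma=0$ forces $u_{\infty}\equiv-\beta$ for some constant. This identifies $\beta$ without circularity. Next, the paper builds explicit sub/super--solutions: below, $\underline u_{\varepsilon}=(-\beta-\varepsilon)\log(\widehat H_{0}/R_{0})+{\rm const}$; above, $\overline u_{\varepsilon}=C_{0}^{1/(N-1)}\phi_{a_{\varepsilon},b}(\widehat H_{0})$ solving an ODE so that $-\Delta^{H}_{N}\overline u_{\varepsilon}\ge e^{u}$. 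Comparison (Lemma \ref{le:2.5}) on annuli and $\varepsilon\to0$ give \eqref{eq:3.15}, and the lower bound plus $\int_{\mathcal C}e^{u}<\infty$ immediately yields $\beta>N$ (no need for the isoperimetric inequality or the flux identity at this stage --- those appear later in Section 5). Only then does the paper run the second blow--down $\widehat u_{R_{n}}(x):=u(R_{n}x)+\beta\log R_{n}$, now legitimately bounded, to obtain \eqref{eq:3.16} via another application of Lemma \ref{le:2.8}.
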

\begin{proof}
Let $R_0>1$ be fixed, and $T^{\widehat{H}_{0}}_{R,R_0}:=B^{\widehat{H}_{0}}_{R}(0)\setminus\overline{B^{\widehat{H}_{0}}_{R_0}(0)}$. For $x\in\mathcal{C}\setminus B_{\frac{R_0}{R}}^{\widehat{H}_0}(0)$, we define $u_R(x):=\frac{u(Rx)-\mu_{0}}{\log R}$ and $f_R(x):=e^{u(Rx)}$, where $\mu_{0}:=\inf\limits_{\partial B^{\widehat{H}_0}_{R_0}(0)\cap\mathcal{C}}u$. From \eqref{prop:3.2}, we infer that
\begin{equation}\label{eq-1}
  U_0:=\sup\limits_{\mathcal{C}}u<+\infty \qquad \text{and} \qquad 0<e^{u(x)}\leq\frac{C}{|x|^N} \qquad \text{in} \quad\mathcal{C}.
\end{equation}
Let $\widehat{u}:=U_0-u$. By \eqref{prop:3.2}, we have $\widehat{u}(x)\geq N\log|x|+U_{0}-C\rightarrow+\infty$, as $|x|\rightarrow+\infty$ with $x\in\mathcal{C}$. Moreover, it follows from equation \eqref{eq:3.1} that, $\widehat{u}$ satisfies the equation
	\begin{equation}\label{eq:3.17}
		\left\{
		\begin{aligned}
			&\Delta ^{\widehat{H}}_{N}\widehat{u}=e^u \quad\,\,&{\rm{in}}\,\,\mathcal{C},\\
			&\widehat{a}(\nabla\widehat{u})\cdot\nu=0  \quad\,\,&{\rm{on}}\,\,\partial\mathcal{C},
		\end{aligned}
		\right.
	\end{equation}
	where $\widehat{a}(\nabla\widehat{u}):=\widehat{H}^{N-1}(\nabla\widehat{u})\nabla \widehat{H}(\nabla\widehat{u})$. Note that \eqref{norm0} yields
    \begin{equation*}
            \frac{1}{C_{H}}|x|\leq \widehat{H}_0(x)\leq \frac{1}{c_{H}}|x|, \qquad \forall \,\, x\in\mathbb{R}^{N}.
    \end{equation*}
    Thus by letting $x=\nabla \widehat{H}(\xi)$, we infer from $\widehat{H}_0(\nabla \widehat{H}(\xi))=1$ (see e.g. Lemma 2.3 in \cite{CFV}) that
    \begin{equation}\label{lub}
        c_{H}\leq \left\lvert \nabla \widehat{H}(\xi)\right\rvert \leq C_{H}, \qquad \forall \,\, \xi\in\mathbb{R}^{N},
    \end{equation}
    where the constants $C_{H}:=\max\limits_{\xi\in \mathbb{S}^{N-1}}H(\xi)\geq c_{H}:=\min\limits_{\xi\in \mathbb{S}^{N-1}}H(\xi)>0$. Consequently, $\widehat{a}:\,\mathbb{R}^{N}\rightarrow\mathbb{R}^{N}$ satisfies the condition \eqref{eq-3}. From the logarithmic asymptotic behavior in convex cone $\mathcal{C}$ at infinity in Theorem \ref{le:1+}, we deduce that, there exists constant $L\geq N$ such that
	\begin{equation}\label{asym-infty}
		N\log|x|+U_{0}-C\leq \widehat{u}\leq L\log|x|,
	\end{equation}
and hence
	\begin{equation}\label{asym-infty-u}
		U_{0}-L\log |x|\leq u\leq C-N\log|x|
	\end{equation}
for any $x\in\mathcal{C}$ with $|x|$ large enough. Therefore, we infer from \eqref{asym-infty-u} and the definition of $u_R(x)$ that
\begin{equation}\label{prop:u_R}
	|u_R(x)|\leq \frac{L\log|Rx|+C_1}{\log R}
\end{equation}
for some constant $C_1>0$, hence $u_R$ is bounded in $L^{\infty}_{loc}(\overline{\mathcal{C}}\setminus\{0\})$, uniformly in $R$. Once again, from \eqref{eq:3.1}, we know that $u_R$ satisfies the equation
	\begin{equation}\label{eq-u_R}
	\left\{
	\begin{aligned}
		&-\Delta ^{H}_{N}u_R=\frac{R^N}{(\log R)^{N-1}}f_R \quad\,\,&{\rm{in}}\,\,\mathcal{C}\setminus B_{\frac{R_0}{R}}^{\widehat{H}_0}(0),\\
		&a(\nabla u_R)\cdot\nu=0  \quad\,\,&{\rm{on}}\,\,\partial\mathcal{C}\setminus \overline{ B_{\frac{R_0}{R}}^{\widehat{H}_0}(0)}.
	\end{aligned}
	\right.
\end{equation}
Note that \eqref{eq-1} yields that $|f_R(x)|=|e^{u(Rx)}|\leq\frac{C_2}{|Rx|^N}$ for some constant $C_2>0$, which implies that
$$\left|\frac{R^N}{(\log R)^{N-1}}f_R\right|\leq\frac{C_2}{(\log R)^{N-1}|x|^N},$$
hence $\frac{R^N}{(\log R)^{N-1}}f_R$ is bounded in $L^{\infty}_{loc}(\overline{\mathcal{C}}\setminus\{0\})$, uniformly in $R$. By \eqref{eq-u_R}, the elliptic estimates in \cite{D,S2,T} and the boundary regularity in \cite{L}, we obtain that $u_R$ is uniformly bounded in $C^{1,\theta}_{loc}(\overline{\mathcal{C}}\setminus\{0\})$. By the Ascoli--Arzel\'a's Theorem and a diagonal argument, we can
find a sequence $R_j\rightarrow+\infty$ such that $u_{R_j}\rightarrow u_{\infty}$ in $C^{1,\theta}_{loc}(\overline{\mathcal{C}}\setminus\{0\})$ as $j\rightarrow+\infty$, where $u_{\infty}$ satisfies
 	\begin{equation}\label{eq-u_infty}
 	\left\{
 	\begin{aligned}
 		&\Delta ^{H}_{N}u_{\infty}=0 \quad\,\,&{\rm{in}}\,\,\mathcal{C},\\
 		&a(\nabla u_{\infty})\cdot\nu=0  \quad\,\,&{\rm{on}}\,\,\partial\mathcal{C}.
 	\end{aligned}
 	\right.
 \end{equation}
In fact, we have proved that, for any sequence $R_j\rightarrow+\infty$, there exists a subsequence (still denoted by $R_{j}$) such that $u_{R_j}\rightarrow u_{\infty}$ in $C^{1,\theta}_{loc}(\overline{\mathcal{C}}\setminus\{0\})$ as $j\rightarrow+\infty$, thus $u_{R}\rightarrow u_{\infty}$ in $C^{1,\theta}_{loc}(\overline{\mathcal{C}}\setminus\{0\})$ as $R\rightarrow+\infty$. Moreover, it follows from \eqref{prop:u_R} that $|u_{\infty}|\leq L$ for any $x\in\mathcal{C}$. Then by Lemma \ref{le:2.8} with $\gamma=0$ and \eqref{asym-infty-u}, we obtain that $u_{\infty}\equiv-\beta$ for some positive constant $N\leq\beta\leq L$.

\smallskip

Next, we will prove that
\begin{equation}\label{L-infty-u}
	u(x)+\beta\log \widehat{H}_0(x)\in L^{\infty}(\overline{\mathcal{C}}\setminus B_{R_0}^{\widehat{H}_0}(0)).
\end{equation}
To this end, for any $\varepsilon>0$ small enough, let
$$\underline{u}_{\varepsilon}(x):=(-\beta-\varepsilon)\log\left[\frac{\widehat{H}_0(x)}{R_0}\right]+\inf\limits_{\partial B_{R_0}^{\widehat{H}_0}(0)\cap\mathcal{C}}(u-\mu_{0}).$$
Since $u_{\infty}\equiv-\beta$, then there exists $R(\varepsilon)>R_{0}$ such that
\begin{equation}\label{a-0}
  \underline{u}_{\varepsilon}(x)\leq u(x)-\mu_{0}\leq(-\beta+\varepsilon)\log\widehat{H}_0(x),\qquad\forall\,\,x\,\in\mathcal{C}\setminus B^{\widehat{H}_0}_{R(\varepsilon)}(0).
\end{equation}
and hence, for any $R\geq R(\varepsilon)$, there holds
$$u-\mu_{0}\geq \underline{u}_{\varepsilon} \quad\,\,\, \,\,\, \text{on}\,\, \mathcal{C}\cap\left(\partial B_{R_0}^{\widehat{H}_0}(0)\cup\partial B_{R}^{\widehat{H}_0}(0)\right).$$
By \eqref{eq:2.2}, one has $\left\langle a\left(\nabla\big(-\log\widehat{H}_0(x)\big)\right), \nu\right\rangle=\frac{1}{\widehat{H}_0(x)^N}\left\langle x,\nu\right\rangle=0$ a.e. on $\partial\mathcal{C}$. Thus by applying Lemma \ref{le:2.5} in $\mathcal{C}\cap T^{\widehat{H}_{0}}_{R,R_0}:=\mathcal{C}\cap\left(B^{\widehat{H}_{0}}_{R}(0)\setminus\overline{B^{\widehat{H}_{0}}_{R_0}(0)}\right)$, then letting $R\rightarrow+\infty$ and $\varepsilon\rightarrow0$, we deduce that, for any $x\in\mathcal{C}\setminus B_{R_0}^{\widehat{H}_0}(0)$, there holds
\begin{equation}\label{a-1}
  u(x)+\beta\log\widehat{H}_0(x)\geq \beta\log R_0+\inf\limits_{\partial B_{R_0}^{\widehat{H}_0}(0)\cap\mathcal{C}}(u-\mu_{0})+\mu_{0},
\end{equation}
which combining with $\int_{\mathcal{C}}e^{u}\mathrm{d}x<+\infty$ implies immediately that $\beta>N$. By taking $0<\varepsilon=\frac{\beta-N}{2}<\beta-N$ in \eqref{a-0}, we get that, there exists $C_0>0$ such that
$$e^{u(x)}\leq C_0\left[\widehat{H}_0(x)\right]^{-\frac{N+\beta}{2}},\qquad\forall\,\,x\in\mathcal{C}\setminus B^{\widehat{H}_0}_{R_0}.$$
For $\varepsilon\in(0,\beta-N)$ small enough, let
$$\overline{u}_{\varepsilon}(x):=C_0^{\frac{1}{N-1}}\phi _{a_{\varepsilon}, b}\left(\widehat{H}_0(x)\right),$$
where $\phi_{a_{\varepsilon},b}(r):=-\left(\frac{\beta-N}{2}\right)^{\frac{1}{1-N}}\int_{R_1}^{r}\frac{\left(a_{\varepsilon}-s^{\frac{N-\beta}{2}}\right)^{\frac{1}{N-1}}}{s}\mathrm{d}s+b$ solves the ODE
$r^{1-N}\left(r^{N-1}(-\phi ^{\prime})^{N-1}\right)^{\prime}=r^{-\frac{N+\beta}{2}}$ ($r\geq R_1$), $a_{\varepsilon}:=\frac{(\beta-\varepsilon)^{N-1}(\beta-N)}{2C_0}$, $R_1$ is fixed suitably large such that $a_{\varepsilon}>\frac{N^{N-1}(\beta-N)}{2C_0}\geq2 R_1^{\frac{N-\beta}{2}}$, $R_1\geq R_{0}$ and $N\log R_{1}\geq\mu_0$, $b$ is chosen suitably large such that
$C_0^{\frac{1}{N-1}}b\geq\max\Big\{0,\max\limits_{\partial B^{\widehat{H}_0}_{R_1}\cap\mathcal{C}}u\Big\}$ (see also the proof of \cite[Proposition 3.1]{CL2}). One can verify that
$$-\Delta^{H}_{N}\overline{u}_{\varepsilon}=C_0\left[\widehat{H}_0(x)\right]^{-\frac{N+\beta}{2}}\geq e^u=-\Delta^{H}_{N}u\qquad\mathrm{in}\,\,T^{\widehat{H}_{0}}_{R,R_1}\cap\mathcal{C},$$
and
\begin{equation}\label{a-2}
  (-\beta+\varepsilon)\log\left[\frac{\widehat{H}_0(x)}{R_1}\right]+C_0^{\frac{1}{N-1}}b\leq \overline{u}_{\varepsilon}(x)\leq (-\beta+\varepsilon)\log\left[\frac{\widehat{H}_0(x)}{R_1}\right]+\widetilde{C}_{1}+C_{0}^{\frac{1}{N-1}}b
\end{equation}
for all $x\in\mathcal{C}\setminus B^{\widehat{H}_0}_{R_1}$ and $\varepsilon\in(0,\beta-N)$ small enough, where $\widetilde{C}_{1}:=\frac{2}{(N-1)(\beta-N)}\left(\frac{2C_{0}R_{1}^{\frac{N-\beta}{2}}}{\beta-N}\right)^{\frac{1}{N-1}}$. It follows from \eqref{a-0} and \eqref{a-2} that, for any $R\geq R(\varepsilon)>R_{1}$, there holds
$$ u\leq \overline{u}_{\varepsilon}\quad\,\,\, \,\,\, \text{on}\,\, \mathcal{C}\cap\left(\partial B_{R_1}^{\widehat{H}_0}(0)\cup\partial B_{R}^{\widehat{H}_0}(0)\right).$$
Moreover, by Lemma \ref{8,le:2.2}, one has $a(\nabla \overline{u}_{\varepsilon})\cdot\nu=0$ a.e. on $\partial\mathcal{C}\setminus B_{R_1}^{\widehat{H}_0}(0)$. Thus by applying Lemma \ref{le:2.5} in $\mathcal{C}\cap T^{\widehat{H}_{0}}_{R,R_1}:=\mathcal{C}\cap\left(B^{\widehat{H}_{0}}_{R}(0)\setminus\overline{B^{\widehat{H}_{0}}_{R_1}(0)}\right)$ and using \eqref{a-2}, then letting $R\rightarrow+\infty$ and $\varepsilon\rightarrow0$, we deduce that, for any $x\in\mathcal{C}\setminus B_{R_1}^{\widehat{H}_0}(0)$, there holds
$$ u(x)+\beta\log\widehat{H}_0(x)\leq\beta\log R_1+\frac{2}{(N-1)(\beta-N)}\left(\frac{2C_{0}R_{1}^{\frac{N-\beta}{2}}}{\beta-N}\right)^{\frac{1}{N-1}}+C_{0}^{\frac{1}{N-1}}b,$$
which combining with \eqref{a-1} implies \eqref{L-infty-u} immediately and hence the validity of \eqref{eq:3.15}.

\smallskip

Now, let $G(x):=u(x)+\beta\log \widehat{H}_0(x)$, then \eqref{L-infty-u} implies that $G(x)\in L^{\infty}(\overline{\mathcal{C}}\setminus B^{\widehat{H}_0}_{R_0}(0))$, and
 \begin{equation}\label{prop:3.25}
    u(x)=-\beta\log \widehat{H}_0(x)+G(x)\ \quad\,\,\, \text{for}\,\,\, x\in\mathcal{C}\setminus B^{\widehat{H}_0}_{R_0}(0),
 \end{equation}
where $N<\beta\leq L$.

\medskip

For any sequence $R_n>0$ with $R_n\rightarrow+\infty$ as $n\rightarrow+\infty$, let
 \begin{equation*}
    \widehat{u}_{R_n}(x):=u(R_nx)+\beta\log R_n.
 \end{equation*}
 By \eqref{prop:3.25}, we know that
 \begin{equation}\label{eq:3.26}
    \widehat{u}_{R_n}(x)=-\beta\log\widehat{H}_0(x)+G_{R_n}(x),
 \end{equation}
 where $G_{R_n}(x)=G(R_nx)$. In view of \eqref{eq:3.1}, we obtain that
 \begin{equation*}
    \left\{
        \begin{aligned}
        & -\Delta ^{H}_{N} \widehat{u}_{R_n}=R_{n}^{N-\beta}\frac{e^{G_{R_n}}}{\widehat{H}_0^{\beta}} \qquad \ &{\rm{in}}\,\, \mathcal{C}\setminus B^{\widehat{H}_0}_{\frac{R_0}{R_n}}(0),\\
        &a(\nabla  \widehat{u}_{R_n})\cdot \nu =0\qquad &{\rm{on}}\,\, \partial\mathcal{C}\setminus B^{\widehat{H}_0}_{\frac{R_0}{R_n}}(0).
        \end{aligned}
        \right.
 \end{equation*}
 Since $ \widehat{u}_{R_n}$ and $R_{n}^{N-\beta}\frac{e^{G_{R_n}}}{\widehat{H}_0^{\beta}}$ are bounded in $L^{\infty}_{loc}(\overline{\mathcal{C}}\setminus \{0\})$, uniformly in $R_n$,
 by the interior regularity estimates in \cite{D,S2,T} and the boundary regularity estimates in \cite{L}, we obtain that $ \widehat{u}_{R_n}$ is uniformly bounded in $C^{1,\theta}_{loc}(\overline{\mathcal{C}}\setminus \{0\})$.
 Then by the Ascoli--Arzel\'a's Theorem and a diagonal argument, we can find a subsequence $R_{n_i}\rightarrow+\infty$ so that $ \widehat{u}_{R_{n_i}}\rightarrow  \widehat{u}_{\infty}$ in $C^{1,\theta}_{loc}(\overline{\mathcal{C}}\setminus \{0\})$ as $i\rightarrow+\infty$. By $\beta>N$, we deduce that $ \widehat{u}_{\infty}$ satisfies
 \begin{equation}\label{eq:3.27}
    \left\{
        \begin{aligned}
        & -\Delta ^{H}_{N} \widehat{u}_{\infty}=0\qquad \ &{\rm{in}}\,\, \mathcal{C},\\
        &a(\nabla  \widehat{u}_{\infty})\cdot \nu =0\qquad &{\rm{on}}\,\, \partial\mathcal{C}.
        \end{aligned}
        \right.
 \end{equation}
 In view of \eqref{eq:3.26}, we have
 \begin{equation}\label{eq:3.28}
    \widehat{u}_{\infty}(x)=-\beta\log \widehat{H}_0(x)+G_{\infty}(x),
 \end{equation}
 where $G_{\infty}$ is the limit of $G_{R_{n_i}}$ in $C^{1}_{loc}(\overline{\mathcal{C}}\setminus \{0\})$. Moreover, the uniform boundedness of $G_{R_n}$ implies $G_{\infty}\in L^{\infty}(\overline{\mathcal{C}}\setminus \{0\})$. By \eqref{eq:3.27}, \eqref{eq:3.28} and $G_{\infty}\in L^{\infty}(\overline{\mathcal{C}}\setminus \{0\})$, we can apply Lemma \ref{le:2.8} to $\widehat{u}_{\infty}$ and derive that $G_{\infty}$ is a constant function.
 By \eqref{prop:3.25} and $G_{R_{n_i}}\rightarrow G_{\infty}$ in $C^{1}_{loc}(\overline{\mathcal{C}}\setminus \{0\})$, we know that
 \begin{equation*}
    \sup_{|x|=R_{n_i},\,x\in\mathcal{C}}|x|\left\lvert \nabla (u(x)+\beta\log\widehat{H}_0(x) )\right\rvert
    =\sup_{|y|=1,\,y\in\mathcal{C}}\left\lvert \nabla G_{R_{n_i}}(y)\right\rvert\rightarrow \sup_{|y|=1,\,y\in\mathcal{C}}\left\lvert \nabla G_{\infty}(y)\right\rvert=0,
 \end{equation*}
 which implies the validity of \eqref{eq:3.16}, thus we have finished our proof.
\end{proof}

\section{Second-order regularity for weak solutions}
In this section, we will prove the second-order regularity for weak solutions via an approximation method.
\begin{prop}\label{Proposition:4.1}
   Let $\mathcal{C}$ be a convex cone and $u$ be a weak solution of \eqref{eq:1.1}, where the gauge $H$ is the same as in Theorem \ref{Th:1.1} and the function $a$ is given by \eqref{a}. Then $a(\nabla u)\in W^{1,2}_{loc}(\overline{\mathcal{C}})$.
\end{prop}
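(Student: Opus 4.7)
My strategy is the double-approximation argument used in \cite{CFR,CL1}: first regularize the gauge so that the induced $N$-Laplacian becomes non-degenerate, and simultaneously approximate the non-smooth convex cone $\mathcal{C}$ by an increasing sequence of smooth convex domains. Concretely, set $H_\epsilon(\xi):=\sqrt{\epsilon+H^2(\xi)}$ and $a_\epsilon(\xi):=H_\epsilon^{N-1}(\xi)\nabla H_\epsilon(\xi)$; the associated operator is uniformly elliptic on bounded sets of $\xi$ and $a_\epsilon\to a$ locally uniformly as $\epsilon\to 0$. Choose smooth open convex sets $\mathcal{C}_n$ with $\mathcal{C}_n\Subset\mathcal{C}_{n+1}\subset\mathcal{C}$ and $\bigcup_n\mathcal{C}_n=\mathcal{C}$. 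For each large $R>0$, I would solve the auxiliary problem
\begin{equation*}
\left\{
\begin{aligned}
&-\operatorname{div} a_\epsilon(\nabla v_{n,\epsilon})=e^u &&\text{in }\mathcal{C}_n\cap B_R,\\
&a_\epsilon(\nabla v_{n,\epsilon})\cdot\nu=0 &&\text{on }\partial\mathcal{C}_n\cap B_R,\\
&v_{n,\epsilon}=u &&\text{on }\mathcal{C}_n\cap\partial B_R.
\end{aligned}
\right.
\end{equation*}
Classical quasi-linear theory (\cite{D,L,T}) gives a solution $v_{n,\epsilon}\in C^{2,\alpha}(\mathcal{C}_n\cap B_R)\cap C^{1,\theta}(\overline{\mathcal{C}_n\cap B_R})$, and Proposition \ref{prop:3.1} together with the comparison principle Lemma \ref{le:2.5} yields a $C^{1,\theta}_{loc}(\overline{\mathcal{C}}\cap\overline{B_R})$-bound on $v_{n,\epsilon}$ that is uniform in $(n,\epsilon)$.

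The core analytic step is a Caccioppoli-type estimate for $\nabla a_\epsilon(\nabla v_{n,\epsilon})$ in $L^2_{loc}$ that is uniform in $(n,\epsilon)$. Differentiating the PDE with respect to $x_k$, testing against $\partial_k v_{n,\epsilon}\,\varphi^2$ for a cutoff $\varphi\in C_c^\infty(\mathbb{R}^N)$, summing over $k$, and integrating by parts produces, schematically,
\begin{equation*}
\int A_\epsilon^{ij}(\nabla v)\, v_{ki}v_{kj}\,\varphi^2\,dx + \mathrm{BT} = \mathrm{RHS},
\end{equation*}
where $A_\epsilon:=D_\xi a_\epsilon$, the quantity $\mathrm{RHS}$ is controlled by the uniform $C^{1,\theta}$-bound and by $e^u\in L^\infty$, and $\mathrm{BT}$ is a boundary integral on $\partial\mathcal{C}_n\cap B_R$. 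Differentiating the Neumann condition tangentially reveals that $\mathrm{BT}$ reduces to an integral of the second fundamental form of $\partial\mathcal{C}_n$ applied to tangential components of $a_\epsilon(\nabla v)$, so by convexity of $\mathcal{C}_n$ the boundary term is non-negative and can be discarded. The left-hand side dominates $\int|\nabla a_\epsilon(\nabla v)|^2\varphi^2\,dx$ via Lemma \ref{le:2.9}, since the ellipticity ratio of $A_\epsilon(\nabla v)$ depends only on $H$ and on the uniform $C^1$-bound on $v_{n,\epsilon}$; this yields the desired uniform Caccioppoli inequality.

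Passing to the limit $n\to\infty$ and then $\epsilon\to 0$, using weak $W^{1,2}$-compactness for $a_\epsilon(\nabla v_{n,\epsilon})$ together with strong $C^{1,\theta/2}_{loc}$-compactness of $v_{n,\epsilon}$, and identifying the limit as $u$ via Lemma \ref{le:2.5} (matching the Dirichlet datum on $\partial B_R$), one obtains $a(\nabla u)\in W^{1,2}_{loc}(\overline{\mathcal{C}}\cap B_R)$ for every $R>0$, hence $a(\nabla u)\in W^{1,2}_{loc}(\overline{\mathcal{C}})$. I expect the main obstacle to be the boundary analysis: since $\partial\mathcal{C}$ is only Lipschitz (with corners at the vertex and edges), one must build the approximants $\mathcal{C}_n$ so that convexity, and hence the non-negativity of $\mathrm{BT}$, is preserved uniformly, while simultaneously $\partial\mathcal{C}_n\to\partial\mathcal{C}$ in a mode strong enough that the Neumann condition survives in the limit. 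This is the technical heart of the approximation scheme of \cite{CFR,CL1}, and the uniform $C^{1,\theta}$-bound from Section 3 is what makes the limiting identification of $a(\nabla u)$ rigorous.
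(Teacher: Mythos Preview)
Your strategy is essentially the paper's: double approximation (regularized operator on smooth convex approximants of $\mathcal{C}$), a Caccioppoli inequality in which convexity signs the boundary contribution, and passage to the limit. Two technical choices differ and deserve comment. First, the paper regularizes by mollifying $a$ directly, $a^l:=a*\phi_l$, invoking \cite[Lemma~2.4]{FF} for the ellipticity bounds \eqref{aeq:4.3}; your $H_\epsilon=\sqrt{\epsilon+H^2}$ is the natural choice under the hypothesis $H^2\in C^{1,1}_{loc}$ uniformly convex of \cite{CFR}, but the present paper assumes only that $H$ is uniformly elliptic (see the Remark after Theorem~\ref{Th:1.1}), under which $D^2(H^2)$ need not be positive definite, so the non-degeneracy of $a_\epsilon$ is not automatic. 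Second, the paper tests the differentiated equation with $a^l_m(\nabla u_{l,k})\rho^2$ rather than $\partial_m v\,\varphi^2$: the former produces $\int\operatorname{trace}((AB)^2)\rho^2$ on the left (with $A=\nabla a^l$, $B=\nabla^2 u_{l,k}$), which is precisely the quantity Lemma~\ref{le:2.9} compares to $|AB|^2$; your test function yields $\int\operatorname{trace}(AB^2)\varphi^2$ instead. That still dominates $|\nabla a_\epsilon(\nabla v)|^2=\operatorname{trace}(BA^2B)$ via the elementary bound $\operatorname{trace}(BA^2B)\le\lambda_{\max}(A)\operatorname{trace}(BAB)$, so your route also closes, but Lemma~\ref{le:2.9} is not the right reference for it. The remaining ingredients (convexity of $\partial\mathcal{C}_n$ to discard the boundary term, uniform $C^{1,\theta}$ bounds to identify the limit) are handled exactly as you describe.
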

\begin{proof}
    The estimate $a(\nabla u)\in W^{1,2}_{loc}(\mathcal{C})$ can be obtained from \cite[Theorem 4.2]{AKM}. Our task is to show further that $a(\nabla u)\in W^{1,2}_{loc}(\overline{\mathcal{C}})$. The main difficulties are the degeneracy of the equation and the non-smoothness of $\partial\mathcal{C}$. We will argue by an approximation method.

    Consider a sequence of convex cones $\{\mathcal{C}_k\}$ such that $\mathcal{C}_k\subset\mathcal{C}$, $\partial\mathcal{C}_k\setminus\{0\}$ is smooth and $\mathcal{C}_k$ approximate $\mathcal{C}$. Fix a point $\overline{x}\in \bigcap \limits_{k}\mathcal{C}_k$, and for $k$ fixed we let $u_k$ be the solution of
    \begin{equation}\label{eq:4.1}
        \left\{
            \begin{aligned}
            & -\Delta ^{H}_{N}u_{k}=e^u\quad \, &{\rm{in}}\ \mathcal{C}_k,\\
            &u_k(\overline{x})=u(\overline{x}),\\
            &a(\nabla u_{k})\cdot \nu =0\quad\, &{\rm{on}}\ \partial\mathcal{C}_k.
            \end{aligned}
            \right.
    \end{equation}
    Since Proposition \ref{prop:3.1} implies that $u$ is locally bounded, by the elliptic estimates in \cite{L,S,T}, we obtain
    that $\{u_k\}\subset C^{1,\theta}_{loc}(\overline{\mathcal{C}_k}\setminus\{0\})\cap C^{0,\theta}_{loc}(\overline{\mathcal{C}_k})$ and
    for every compact subset $A\subset \mathcal{C}$, $\|u_{k}\|_{C^{1,\theta}(A)}$ is uniformly bounded with respect to $k$. Then
    by the Ascoli--Arzel$\acute{\text{a}} $'s Theorem and a diagonal process, we have
    \begin{equation}\label{prop:4.2}
      u_k\rightarrow v\quad \text{and}\quad \nabla u_k\rightarrow \nabla v\,\,\, \text{pointwise\ in}\,\,\, A,\qquad \text{as}\ k\rightarrow+\infty.
    \end{equation}

\smallskip

In view of \eqref{eq:4.1}, we deduce that $v$ satisfies
    \begin{equation}\label{eq:4.3}
        \left\{
            \begin{aligned}
            & -\Delta ^{H}_{N}v=e^u\quad \, &{\rm{in}}\,\, \mathcal{C},\\
            &v(\overline{x})=u(\overline{x}),\\
            &a(\nabla v)\cdot \nu =0\quad\, &{\rm{on}}\,\, \partial\mathcal{C}.
            \end{aligned}
            \right.
    \end{equation}
    Note that the solution of \eqref{eq:4.3} is unique. Actually, assume that $u_1$, $v_1$ are two solutions of equation \eqref{eq:4.3}, then we have
    \begin{equation*}
        \left\{
            \begin{aligned}
            & -\Delta ^{H}_{N}u_1+\Delta ^{H}_{N}v_1=0\quad \, &{\rm{in}}\,\, \mathcal{C},\\
            &u_1(\overline{x})=v_1(\overline{x})=u(\overline{x}), \\
            &a(\nabla u_1)\cdot \nu =a(\nabla v_1)\cdot \nu=0\quad\, &{\rm{on}}\,\, \partial\mathcal{C}.
            \end{aligned}
            \right.
    \end{equation*}
Choosing $u_1-v_1$ as a test function, we get
$$\int_{\mathcal{C}}\langle a(\nabla u_1)-a(\nabla v_1), \nabla u_1-\nabla v_1\rangle {\rm{d}}x-\int_{\partial\mathcal{C}} (a(\nabla u_1)-a(\nabla v_1))\cdot \nu(u_1-v_1){\rm{d}}\mathcal{H}^{N-1}=0.$$
Note that $(a(\nabla u_1)-a(\nabla v_1))\cdot \nu=0$ on $\partial\mathcal{C}$, and let $\xi_1=\nabla u_1$, $\xi_2=\nabla v_1$ in \eqref{aeq:2.4}, we have
\begin{equation*}
        c_1\int_{\mathcal{C}}(|\nabla u_1|+|\nabla v_1|)^{N-2}|\nabla(u_1-v_1)|^2{\rm {d}}x\leq\int_{\mathcal{C}}\langle a(\nabla u_1)-a(\nabla v_1), \nabla u_1-\nabla v_1\rangle {\rm{d}}x=0,
\end{equation*}
and hence $\nabla(u_1-v_1)=0$. Since convex cone $\mathcal{C}$ is connected, then $u_1=v_1+C_1$ for some constant $C_1$. Moreover, it follows from $u_1(\overline{x})=v_1(\overline{x})=u(\overline{x})$ that $C_1=0$, hence the solution of \eqref{eq:4.3} is unique. Since both $u$ and $v$ are solutions of \eqref{eq:4.3}, thus we must have $v\equiv u$. Then from \eqref{prop:4.2}, we infer that
\begin{equation}\label{4+}
    u_k\rightarrow u\quad \text{and}\quad \nabla u_k\rightarrow \nabla u\,\,\, \text{pointwise\ in}\,\,\, A,\qquad \text{as}\ k\rightarrow+\infty.
\end{equation}

\smallskip

Let $\{\phi _l\}$ with $l\in\mathbb{N}$ be a family of radially symmetric smooth mollifiers and define
\begin{equation*}
    a^l(z):=(a\ast \phi_l)(z)\qquad \text{for}\,\, z\in\mathbb{R}^N,
\end{equation*}
where $a\ast\phi_l$ stands for the convolution. By the properties of convolution and continuity of $a(\cdot)$, we have
 $a^l\rightarrow a$ uniformly on compact subset of $\mathbb{R}^N$, as $l\rightarrow+\infty$. By \cite[Lemma 2.4]{FF} and its proof therein, we know that
 $a^l$ satisfies
 \begin{equation}\label{aeq:4.3}
    \left\langle \nabla a^l(z)\xi,\xi\right\rangle\geq\frac{1}{\lambda}(|z|^2+s_l^2)^{\frac{N-2}{2}}|\xi|^2
    \qquad \text{and}\qquad |\nabla a^l(z)|\leq \lambda (|z|^2+s_l^2)^{\frac{N-2}{2}}
 \end{equation}
for every $z$, $\xi\in\mathbb{R}^N$, with $s_l\neq0$ and $s_l\rightarrow0$ as $l\rightarrow+\infty$, and $\lambda>0$ be given by \eqref{ineq:2.3} in Lemma \ref{8,le:2.3}.

Let $u_{l,k}\in W^{1,N}_{loc}(\mathcal{C}_k)$ be a weak solution of
\begin{equation}\label{eq:4.4}
    \left\{
            \begin{aligned}
            & -\Delta ^{H,l}_{N}u_{l,k}=-\operatorname{div}\left(a^l(\nabla u_{l,k})\right)=e^u\quad \, &{\rm{in}}\,\, \mathcal{C}_k,\\
            &u_{l,k}(\overline{x})=u(\overline{x}),\\
            &a^l(\nabla u_{l,k})\cdot \nu =0\quad\, &{\rm{on}}\,\, \partial\mathcal{C}_k.
            \end{aligned}
            \right.
\end{equation}
By the locally boundedness of $u$ and the elliptic estimates in \cite{L,S,T}, we deduce that $\{u_{l,k}\}$ are bounded in
 $C^{1,\theta}_{loc}(\overline{\mathcal{C}_k}\setminus\{0\})\cap C^{0,\theta}_{loc}(\overline{\mathcal{C}_k})$
  uniformly in $l$, as $l\rightarrow+\infty$. Then by the Ascoli--Arzel$\acute{\text{a}}$'s Theorem and a diagonal process, we obtain that $u_{l,k}$
   converges in $C^1_{loc}(\overline{\mathcal{C}_k}\setminus\{0\})$ to the unique solution $\overline{u}_k$ of
   \begin{equation}\label{eq:4.5}
    \left\{
        \begin{aligned}
        & -\Delta ^{H}_{N}\overline{u}_k=e^u\quad \, &{\rm{in}}\,\, \mathcal{C}_k,\\
        &\overline{u}_k(\overline{x})=u(\overline{x}),\\
        &a(\nabla \overline{u}_k)\cdot \nu =0\quad\, &{\rm{on}}\,\, \partial\mathcal{C}_k.
        \end{aligned}
        \right.
   \end{equation}
Note that $u_k$ is also a solution of \eqref{eq:4.5}, it follows from uniqueness that $\overline{u}_k=u_k$, and hence $u_{l,k}\rightarrow u_k$ in $C^1_{loc}(\overline{\mathcal{C}_k}\setminus\{0\})$ as $l\rightarrow+\infty$.

\smallskip

Given $R>1$ large, we define
 \begin{equation*}
    \mathcal{C}_{k,R}:=\mathcal{C}_k\cap B_R(0),\quad\, \Gamma_{k,0}^R:=\mathcal{C}_k\cap \partial B_R(0),\quad\, \Gamma_{k,1}^R:=\partial\mathcal{C}_k\cap  B_R(0).
 \end{equation*}
 For $\delta>0$ small enough, such that $2\delta<\frac{1}{R}$, define the set
 \begin{equation*}
    \mathcal{C}_{k,R}^{\delta}:=\{x\in\mathcal{C}_{k,R}\mid\,\text{dist}(x,\partial\mathcal{C}_{k,R})>\delta\}.
 \end{equation*}
 \par Let $\varphi \in C^{1}_c(B_R(0)\setminus B_{\frac{1}{R}}(0))$.  Since $\mathcal{C}_{k,R}\cap \text{supp}(\varphi)$
 is piecewise smooth, for $\delta>0$ small we can obtain
 that $(\mathcal{C}_{k,R}^{\delta}\setminus \mathcal{C}_{k,R}^{2\delta})\cap\text{supp}(\varphi)$ is piecewise smooth. In particular,
 every $x\in(\mathcal{C}_{k,R}^{\delta}\setminus \mathcal{C}_{k,R}^{2\delta})\cap\text{supp}(\varphi)$ can be written as
 \begin{equation*}
    x=y-|x-y|\nu(y),
 \end{equation*}
 where $y=y(x)\in\partial\mathcal{C}_{k,R}^{\delta}$ is the unique projection of $x$ on $\partial\mathcal{C}_{k,R}^{\delta}$ and $\nu(y)$
 is the unit outer normal vector to $\partial\mathcal{C}_{k,R}^{\delta}$ at $y$. Moreover, by the method in \cite[Formula (14.98)]{GT}, we can
  define a $C^1$ function $g$, such that $x=g(y,d)$ holds for every
   $x\in(\mathcal{C}_{k,R}^{\delta}\setminus \mathcal{C}_{k,R}^{2\delta})\cap\text{supp}(\varphi)$, where $y\in\partial\mathcal{C}_{k,R}^{\delta}$ is the projection of $x$ on $\partial\mathcal{C}_{k,R}^{\delta}$ and $d=|x-y|$. Since $u_{l,k}$
     solves the non-degenerate equation, we have $u_{l,k}\in C^1\cap W^{2,2}_{loc}(\overline{\mathcal{C}_k})$ and
     $a^l(\nabla u_{l,k})\in W^{1,2}_{loc}(\overline{\mathcal{C}_k})$. Moreover, since
       $\mathcal{C}_{k,R}^{\delta}\setminus \mathcal{C}_{k,R}^{2\delta}$ is piecewise smooth, then $u_{l,k}\in C^2(\mathcal{C}_{k,R}^{\delta}\setminus \mathcal{C}_{k,R}^{2\delta})$.

\smallskip

Let $\zeta _{\delta}: \mathcal{C}_{k,R}\rightarrow[0,1]$ be a piecewise smooth function such that
\begin{equation*}
    \zeta _{\delta}=1\ \text{in}\ \mathcal{C}_{k,R}^{2\delta},\quad
    \zeta _{\delta}=0\ \text{in}\ \mathcal{C}_{k,R}\setminus\mathcal{C}_{k,R}^{\delta},\quad \text{and}\quad
    \nabla\zeta _{\delta}=-\frac{1}{\delta}\nu(y(x))\,\,\, \text{in}\,\, \mathcal{C}_{k,R}^{\delta}\setminus\mathcal{C}_{k,R}^{2\delta}.
\end{equation*}
Choosing $\psi \in C^{1}_c(B_R(0)\setminus B_{\frac{1}{R}}(0))$ as the test function in \eqref{eq:4.4}, and integrating by
parts, we get
\begin{equation}\label{eq:4.6}
    \int_{\mathcal{C}_{k,R}}a^l(\nabla u_{l,k})\cdot\nabla \psi \mathrm{d}x-\int_{\partial\mathcal{C}_{k,R}}\psi a^l(\nabla u_{l,k})\cdot\nu \mathrm{d}\mathcal{H}^{N-1}
    =\int_{\mathcal{C}_{k,R}}e^u\psi \mathrm{d}x.
\end{equation}
Since
\begin{equation*}
    \int_{\partial\mathcal{C}_{k,R}}\psi a^l(\nabla u_{l,k})\cdot\nu \mathrm{d}\mathcal{H}^{N-1}=\int_{\Gamma_{k,0}^R}\psi a^l(\nabla u_{l,k})\cdot\nu \mathrm{d}\mathcal{H}^{N-1}
    +\int_{\Gamma_{k,1}^R}\psi a^l(\nabla u_{l,k})\cdot\nu \mathrm{d}\mathcal{H}^{N-1},
\end{equation*}
in view of $\psi \in C^{\infty}_c(B_R(0)\setminus B_{\frac{1}{R}}(0))$ and the boundary condition in \eqref{eq:4.4}, we obtain
 that the boundary integral term in \eqref{eq:4.6} vanishes, thus \eqref{eq:4.6} becomes
 \begin{equation}\label{eq:4.7}
    \int_{\mathcal{C}_{k,R}}a^l(\nabla u_{l,k})\cdot\nabla \psi \mathrm{d}x= \int_{\mathcal{C}_{k,R}}e^u\psi \mathrm{d}x.
 \end{equation}

Next, we choose $\psi=\partial_m(\varphi\zeta_{\delta})$ in \eqref{eq:4.7} with $m\in\{1,\cdots,N\}$, then obtain that
\begin{equation*}
    \sum\limits_{i=1}^{N}\left(\int_{\mathcal{C}_{k,R}}\partial_m (a^l_i(\nabla u_{l,k}))\zeta_{\delta}\partial_i\varphi \mathrm{d}x+\int_{\mathcal{C}_{k,R}}\partial_m (a^l_i(\nabla u_{l,k}))\varphi\partial_i\zeta_{\delta} \mathrm{d}x\right)
    = \int_{\mathcal{C}_{k,R}}\partial_m e^u\zeta_{\delta}\varphi \mathrm{d}x,
\end{equation*}
where the notation $a^l=(a^l_1,\cdots,a^l_N)$ denotes the components of the vector field $a^l$. Recalling the definition of $\zeta_{\delta}$, we have
\begin{equation*}
    \lim\limits_{\delta\rightarrow0}\int_{\mathcal{C}_{k,R}}\partial_m (a^l_i(\nabla u_{l,k}))\zeta_{\delta}\partial_i\varphi \mathrm{d}x=
    \int_{\mathcal{C}_{k,R}}\partial_m (a^l_i(\nabla u_{l,k}))\partial_i\varphi \mathrm{d}x.
\end{equation*}

Let $f(x)=\partial_m (a^l_i(\nabla u_{l,k}))\varphi(x) $, by $\nabla\zeta _{\delta}=-\frac{1}{\delta}\nu(y)$ and the co-area formula, we have
 \begin{equation*}
    \begin{aligned}
        \int_{\mathcal{C}_{k,R}^{\delta}\setminus \mathcal{C}_{k,R}^{2\delta}}f\partial_i\zeta_{\delta}\mathrm{d}x
            &=-\frac{1}{\delta} \int_{\mathcal{C}_{k,R}^{\delta}\setminus \mathcal{C}_{k,R}^{2\delta}}\nu_i(y(x))f\mathrm{d}x\\
            &=-\frac{1}{\delta}\int_{\delta}^{2\delta}\mathrm{d}t\int_{\partial\mathcal{C}_{k,R}^{t}}\nu_i(y(x))f(y-t\nu(y))|\det(Dg)|\mathrm{d}\mathcal{H}^{N-1}\\
            &=-\int_{1}^{2}\mathrm{d}s\int_{\partial\mathcal{C}_{k,R}^{s\delta}}\nu_i(y(x))f(y-s\delta\nu(y))|\det(Dg)|\mathrm{d}\mathcal{H}^{N-1}.
    \end{aligned}
 \end{equation*}
Since $\nabla u_{l,k}\in C^1(\mathcal{C}_{k,R}^{\delta}\setminus\mathcal{C}_{k,R}^{2\delta})$, one has $f(x)=\partial_m (a^l_i(\nabla u_{l,k}))\varphi(x) \in C^{0}$, hence we can pass to the limit and deduce that
\begin{equation*}
    \lim\limits_{\delta\rightarrow0}\int_{\mathcal{C}_{k,R}}\partial_m (a^l_i(\nabla u_{l,k}))\varphi\partial_i\zeta_{\delta} \mathrm{d}x
    =-\int_{\partial\mathcal{C}_{k,R}}\partial_m (a^l_i(\nabla u_{l,k}))\varphi(x)\nu_i \mathrm{d}\mathcal{H}^{N-1}.
\end{equation*}
Thus we obtain
\begin{equation}\label{eq:4.8}
    \sum\limits_{i=1}^{N} \left(\int_{\mathcal{C}_{k,R}}\partial_m (a^l_i(\nabla u_{l,k}))\partial_i\varphi \mathrm{d}x
    -\int_{\partial\mathcal{C}_{k,R}}\partial_m (a^l_i(\nabla u_{l,k}))\varphi(x)\nu_i \mathrm{d}\mathcal{H}^{N-1}\right)
     =\int_{\mathcal{C}_{k,R}}\partial_m e^u\varphi \mathrm{d}x.
\end{equation}

Choose $\varphi=a^l_m(\nabla u_{l,k})\rho ^2$, $m\in\{1,2,\cdots,N\}$, where $\rho \in C_c^{\infty}(B_R(0)\setminus B_{\frac{1}{R}}(0))$. Next, using the fact that $\mathcal{C}_{k,R}$ is convex and the argument as in the proof of \cite[Proposition 2.8]{CFR} (see Formulae (2.45)-(2.50) therein), we will show that the summation $\sum\limits_{m=1}^{N}$ over the second term on left-hand side of \eqref{eq:4.8} is negative. First, one has
\begin{equation}\label{aeq:4.9}
    \partial_m (a^l_i(\nabla u_{l,k}))a^l_m(\nabla u_{l,k})\rho ^2\nu_i=a_m^l(\nabla u_{l,k})\partial_m (a^l_i(\nabla u_{l,k})\nu_i)\rho^2-a_m^l(\nabla u_{l,k})a_i^l(\nabla u_{l,k})\rho^2\partial_m\nu_i.
\end{equation}
Note that $\partial_m\nu_i(x)$ is the second fundamental form $\mathrm{II}_x$ of $\Gamma^{R}_{k,1}$ at $x$:
$$\sum\limits_{i,m=1}^{N}\partial_m\nu_i a^l_i(\nabla u_{l,k})a^l_m(\nabla u_{l,k})=\mathrm{II}_x(a^l(\nabla u_{l,k}),a^l(\nabla u_{l,k})).$$
Since $\mathcal{C}_{k,R}$ is convex, then $\mathrm{II}_x$ is non-negative definite, we deduce that
\begin{equation*}
    \sum\limits_{i,m=1}^{N}\partial_m\nu_i a^l_i(\nabla u_{l,k})a^l_m(\nabla u_{l,k})\rho^2=\mathrm{II}_x(a^l(\nabla u_{l,k}),a^l(\nabla u_{l,k}))\rho^2\geq0.
\end{equation*}
Then, by \eqref{aeq:4.9}, we have
\begin{equation*}
    \begin{aligned}
        \sum\limits_{i,m=1}^{N}\int_{\partial\mathcal{C}_{k,R}}\partial_m (a^l_i(\nabla u_{l,k}))a^l_m(\nabla u_{l,k})\rho ^2\nu_i\mathrm{d}\mathcal{H}^{N-1}
        &\leq \sum\limits_{i,m=1}^{N}\int_{\partial\mathcal{C}_{k,R}}a_m^l(\nabla u_{l,k})\partial_m (a^l_i(\nabla u_{l,k})\nu_i)\rho^2\mathrm{d}\mathcal{H}^{N-1}\\
        &=\int_{\partial\mathcal{C}_{k,R}}a^l(\nabla u_{l,k})\cdot\nabla(a^l(\nabla u_{l,k})\cdot\nu)\rho^2\mathrm{d}\mathcal{H}^{N-1}.
    \end{aligned}
\end{equation*}
Note that $a^l(\nabla u_{l,k})\cdot\nu=0$ on $\Gamma_{k,1}^R$, thus $a^l(\nabla u_{l,k})$ is the tangent vector field of $\partial\Gamma_{k,1}^R$, which implies that $a^l(\nabla u_{l,k})\cdot\nabla(a^l(\nabla u_{l,k})\cdot\nu)$ is the tangent derivative of $a^l(\nabla u_{l,k})\cdot\nu$, then it follows from $a^l(\nabla u_{l,k})\cdot\nu=0$ on $\Gamma_{k,1}^R$ that $a^l(\nabla u_{l,k})\cdot\nabla(a^l(\nabla u_{l,k})\cdot\nu)=0$ on $\Gamma_{k,1}^R$. Since $\partial\mathcal{C}_{k,R}=\Gamma_{k,0}^R\cup\Gamma_{k,1}^R$ and $\rho=0$ on $\Gamma_{k,0}^R$, one has
$$\int_{\partial\mathcal{C}_{k,R}}a^l(\nabla u_{l,k})\cdot\nabla(a^l(\nabla u_{l,k})\cdot\nu)\rho^2\mathrm{d}\mathcal{H}^{N-1}=0,$$
thus we obtain that
\begin{equation*}
    \sum\limits_{i,m=1}^{N}\int_{\partial\mathcal{C}_{k,R}}\partial_m (a^l_i(\nabla u_{l,k}))a^l_{m}(\nabla u_{l,k})\rho ^2\nu_i \mathrm{d}\mathcal{H}^{N-1}\leq0.
\end{equation*}
Hence by \eqref{eq:4.8}, we have
\begin{equation*}
    \sum\limits_{i,m=1}^{N}\int_{\mathcal{C}_{k,R}}\partial_m (a^l_i(\nabla u_{l,k}))\partial_i(a^l_m(\nabla u_{l,k})\rho ^2)\mathrm{d}x
    \leq N\int_{\mathcal{C}_{k,R}}|\nabla e^u| |a^l(\nabla u_{l,k})|\rho ^2 \mathrm{d}x .
\end{equation*}
Let $A:=\nabla a^l(z)|_{z=\nabla u_{l,k}}=\nabla a^l(\nabla u_{l,k})$, $B:=\nabla^2 u_{l,k}(x)$. Then $A$, $B$ are symmetric matrices. Let $\lambda_{min}$ be the
smallest eigenvalue of $A$, and $\lambda_{max}$ be the largest eigenvalue of $A$, then by \eqref{aeq:4.3}, we know that
$$\frac{1}{\lambda}(|\nabla u_{l,k}|^2+s_l^2)^{\frac{N-2}{2}}\leq\lambda_{min}\leq\lambda_{max}\leq\lambda(|\nabla u_{l,k}|^2+s_l^2)^{\frac{N-2}{2}}.$$
Note that
$$\nabla (a^l(\nabla u_{l,k}))=\nabla a^l(\nabla u_{l,k})\nabla^2u_{l,k}=AB,$$
and
$$\mathrm{trace} [(\nabla (a^l(\nabla u_{l,k})))^2]=\sum\limits_{i,m=1}^{N}\partial_i (a^l_m(\nabla u_{l,k}))\partial_m (a^l_i(\nabla u_{l,k})),$$
thus we can deduce from  $\frac{\lambda_{max}}{\lambda_{min}}\leq\lambda^{2}$ and Lemma \ref{le:2.9} that
\begin{equation*}
    \begin{aligned}
        N\lambda^4 \mathrm{trace} [(\nabla (a^l(\nabla u_{l,k})))^2]
        &=  N\lambda^4\mathrm{trace}(\nabla a^l(\nabla u_{l,k})\nabla^2u_{l,k}\nabla a^l(\nabla u_{l,k})\nabla^2u_{l,k})\\
        &\geq  \mathrm{trace}[\nabla a^l(\nabla u_{l,k})\nabla^2u_{l,k}(\nabla a^l(\nabla u_{l,k})\nabla^2u_{l,k})^T]\\
        &=|\nabla a^l(\nabla u_{l,k})\nabla^2u_{l,k}|^2\\
        &=|\nabla (a^l(\nabla u_{l,k}))|^2.
    \end{aligned}
\end{equation*}
As a consequence, we have
\begin{equation*}
    \begin{aligned}
        \sum\limits_{i,m=1}^{N}\partial_m (a^l_i(\nabla u_{l,k}))\partial_i(a^l_m(\nabla u_{l,k})\rho^2)
        &=\sum\limits_{i,m=1}^{N}\partial_m (a^l_i(\nabla u_{l,k}))\partial_i (a^l_m(\nabla u_{l,k}))\rho^2\\
        &+2\sum\limits_{i,m=1}^{N}\partial_m (a^l_i(\nabla u_{l,k}))a^l_m(\nabla u_{l,k})\rho_i\rho\\
        &\geq \frac{1}{N\lambda^4}|\nabla a^l(\nabla u_{l,k})|^2\rho^2+2\sum\limits_{i,m=1}^{N}\partial_m (a^l_i(\nabla u_{l,k}))a^l_m(\nabla u_{l,k})\rho_i\rho.
    \end{aligned}
\end{equation*}
Then, by Young's inequality, we obtain the following Caccioppoli type estimate:
\begin{eqnarray}\label{ineq:4.9}
    &&\quad \int_{\mathcal{C}_{k,R}}\left\lvert \nabla a^l(\nabla u_{l,k})\right\rvert^2\rho^2\mathrm{d}x \\
    \nonumber &&\leq C\int_{\mathcal{C}_{k,R}}\left\lvert a^l(\nabla u_{l,k})\right\rvert^2\left\lvert\nabla\rho \right\rvert^2 \mathrm{d}x+
     C \int_{\mathcal{C}_{k,R}}\left\lvert\nabla(e^u)\right\rvert\left\lvert a^l(\nabla u_{l,k})\right\rvert \rho^2\mathrm{d}x
\end{eqnarray}
for some constant $C=C(N,\lambda)$. Since $ \rho \in C_c^{\infty}(B_R(0)\setminus B_{\frac{1}{R}}(0))$, by approximation, we know that \eqref{ineq:4.9} holds for any  $\rho \in C_c^{\infty}(\mathbb{R}^N)$. Since $u_{l,k}\rightarrow u_k$ in $C^1_{loc}(\overline{\mathcal{C}_k}\setminus\{0\})$ and $a^l\rightarrow a$ uniformly, as $l\rightarrow+\infty$, we deduce that
\begin{equation}\label{ineq:4.10}
    \int_{\mathcal{C}_k}\left\lvert \nabla a(\nabla u_{k})\right\rvert^2\rho^2\mathrm{d}x\leq
    C\int_{\mathcal{C}_k}\left\lvert a(\nabla u_{k})\right\rvert^2\left\lvert\nabla\rho \right\rvert^2 \mathrm{d}x+
    C \int_{\mathcal{C}_k}\left\lvert\nabla(e^u)\right\rvert\left\lvert a(\nabla u_{k})\right\rvert \rho^2\mathrm{d}x.
\end{equation}
In particular, \eqref{ineq:4.10} implies $a(\nabla u_k)\in W^{1,2}_{loc}(\overline{\mathcal{C}_k})$ and $\{a(\nabla u_k)\}$
is uniformly bounded in $W^{1,2}_{loc}(\overline{\mathcal{C}_k})$. Hence, by letting $k\rightarrow+\infty$ in \eqref{ineq:4.10}, we deduce from \eqref{4+} that
\begin{equation*}
    \int_{\mathcal{C}}\left\lvert \nabla a(\nabla u)\right\rvert^2\rho^2\mathrm{d}x\leq
    C\int_{\mathcal{C}}\left\lvert a(\nabla u)\right\rvert^2\left\lvert\nabla\rho \right\rvert^2 \mathrm{d}x+
    C \int_{\mathcal{C}}\left\lvert\nabla(e^u)\right\rvert\left\lvert a(\nabla u)\right\rvert \rho^2\mathrm{d}x,
\end{equation*}
which implies immediately that $a(\nabla u)\in W^{1,2}_{loc}(\overline{\mathcal{C}})$.
\end{proof}

\section{Quantization results by the anisotropic isoperimetric inequality inside convex cones and the Pohozaev type identity}
In this section, based on the sharp asymptotic estimates on $u$ and $\nabla u$ and the second order regularity $a(\nabla u)\in W^{1,2}_{loc}(\overline{\mathcal{C}})$ for weak solution $u$, we will prove delicate quantization results on the total mass $\int_{\mathcal{C}}e^u\mathrm{d}x$ by using the anisotropic isoperimetric inequality inside convex cones and the Pohozaev type identity.  Note that the latter indicates  that the equality holds in the isoperimetric inequality inside convex cones for any weak solution $u$ to \eqref{eq:1.1}.

We denote $B_R^{\widehat{H}_0}:=B_R^{\widehat{H}_0}(0)$ for $R>0$. Let $\beta_0:=\left[\frac{\int_{\mathcal{C}}e^u \mathrm{d}x}{N\mathcal{L}^{N}(B_1^{\widehat{H}_0}\cap \mathcal{C})}\right]^{\frac{1}{N-1}}$, where $\mathcal{C}$ and $\widehat{H}_0$ are the same as in Theorem \ref{Th:1.1}. First, we can determine the precise value of the constant $\beta$ in the sharp asymptotic estimates on $u$ and $\nabla u$ through an approximation argument used in the proof of \cite[Theorem 1.3]{CRS}.
\begin{lem}\label{le:5.1}
    Let $\beta$ be the constant in the sharp asymptotic estimates on $u$ and $\nabla u$ in Proposition \ref{Prop:3.2}, then $\beta=\beta_0$.
\end{lem}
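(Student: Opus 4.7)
The plan is to compute $\int_{\mathcal{C}}e^u\,\mathrm{d}x$ explicitly in terms of $\beta$ by reading off the flux of $a(\nabla u)$ at infinity through an anisotropic Wulff sphere. Test the weak formulation of (\ref{eq:1.1}) against the cutoff $\phi_R(x):=\eta(\widehat{H}_0(x)/R)$, where $\eta\in C^1(\mathbb{R})$ satisfies $\eta=1$ on $[0,1]$ and $\eta=0$ on $[2,\infty)$ (the support of $\phi_R$ is contained in a Euclidean ball $B_{3R/c_H}(0)$, so $\phi_R$ is an admissible test function and the Neumann condition on $\partial\mathcal{C}$ is absorbed automatically). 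This gives
\begin{equation*}
\int_{\mathcal{C}}e^u\,\phi_R\,\mathrm{d}x
=\int_{\mathcal{C}}a(\nabla u)\cdot\nabla\phi_R\,\mathrm{d}x
=\frac{1}{R}\int_{\mathcal{C}}\eta^{\prime}\!\left(\tfrac{\widehat{H}_0(x)}{R}\right)a(\nabla u)\cdot\nabla\widehat{H}_0(x)\,\mathrm{d}x,
\end{equation*}
whose left-hand side tends to $\int_{\mathcal{C}}e^u\,\mathrm{d}x$ by dominated convergence and whose right-hand side is supported on the Wulff annulus $\mathcal{C}\cap\{R\le\widehat{H}_0\le 2R\}$, precisely where Proposition \ref{Prop:3.2} dictates the leading order of $\nabla u$.

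Next I evaluate the right-hand side on the model field $v(x):=-\frac{\beta}{\widehat{H}_0(x)}\nabla\widehat{H}_0(x)=\nabla\bigl(-\beta\log\widehat{H}_0\bigr)$. Setting $\widehat{H}(\xi):=H(-\xi)$ and applying Lemma \ref{8,le:2.2} to the dual pair $(\widehat{H},\widehat{H}_0)$ yields $\widehat{H}(\nabla\widehat{H}_0(x))=1$ and $x=\widehat{H}_0(x)\,\nabla\widehat{H}(\nabla\widehat{H}_0(x))$; combined with the identity $\nabla\widehat{H}(\eta)=-\nabla H(-\eta)$ and the homogeneities (of $H$ of degree $1$ and $\nabla H$ of degree $0$ under positive scaling), a direct calculation yields $H(v)=\beta/\widehat{H}_0$ and $\nabla H(v)=-x/\widehat{H}_0$, so that
\begin{equation*}
a(v)=H^{N-1}(v)\nabla H(v)=-\frac{\beta^{N-1}}{\widehat{H}_0(x)^N}\,x,\qquad a(v)\cdot\nabla\widehat{H}_0(x)=-\frac{\beta^{N-1}}{\widehat{H}_0(x)^{N-1}},
\end{equation*}
where in the second equality I used Euler's identity $\langle x,\nabla\widehat{H}_0(x)\rangle=\widehat{H}_0(x)$. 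The coarea formula applied to $\widehat{H}_0$, together with the scaling $\mathcal{L}^N(\mathcal{C}\cap B_t^{\widehat{H}_0})=t^N\mathcal{L}^N(\mathcal{C}\cap B_1^{\widehat{H}_0})$, gives $\int_{\mathcal{C}\cap\partial B_t^{\widehat{H}_0}}|\nabla\widehat{H}_0|^{-1}\mathrm{d}\mathcal{H}^{N-1}=Nt^{N-1}\mathcal{L}^N(\mathcal{C}\cap B_1^{\widehat{H}_0})$, and slicing the annular integral by level sets of $\widehat{H}_0$ collapses the model contribution to
\begin{equation*}
\frac{1}{R}\int_{\mathcal{C}}\eta^{\prime}\!\left(\tfrac{\widehat{H}_0}{R}\right)a(v)\cdot\nabla\widehat{H}_0\,\mathrm{d}x
=-\beta^{N-1}N\mathcal{L}^N(\mathcal{C}\cap B_1^{\widehat{H}_0})\int_1^2\eta^{\prime}(s)\,\mathrm{d}s
=N\beta^{N-1}\mathcal{L}^N(\mathcal{C}\cap B_1^{\widehat{H}_0}),
\end{equation*}
independently of $R$.

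It remains to control the error incurred by replacing $a(v)$ with $a(\nabla u)$. On the annular shell $\mathcal{C}\cap\{R\le\widehat{H}_0\le 2R\}$, Proposition \ref{Prop:3.2} provides $|\nabla u|,|v|=O(R^{-1})$ together with $|\nabla u-v|=o(R^{-1})$, so (\ref{aeq:2.5}) yields $|a(\nabla u)-a(v)|=o(R^{-(N-1)})$ uniformly; multiplying by $R^{-1}|\eta^{\prime}(\widehat{H}_0/R)||\nabla\widehat{H}_0|$ and integrating against the Euclidean volume $O(R^N)$ of the annulus produces an $o(1)$ contribution as $R\to\infty$. Combining the three displayed identities gives $\int_{\mathcal{C}}e^u\,\mathrm{d}x=N\beta^{N-1}\mathcal{L}^N(\mathcal{C}\cap B_1^{\widehat{H}_0})$, which is precisely $\beta=\beta_0$. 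The main obstacle, as flagged in the excerpt, is that $u$ is not $C^1$ up to the corners of $\partial\mathcal{C}$, which prevents a naive application of Lemma \ref{le:2.3} on $\mathcal{C}\cap B_R^{\widehat{H}_0}$; the weak-formulation-with-cutoff route above circumvents this entirely, while an alternative divergence-theorem approach would rely on the approximation scheme $\mathcal{C}_k\nearrow\mathcal{C}$ with $\partial\mathcal{C}_k\setminus\{0\}$ smooth from the proof of Proposition \ref{Proposition:4.1}, combined with the $C^{1,\theta}_{\mathrm{loc}}$-convergence $u_k\to u$ and a uniform version of the asymptotic (\ref{eq:3.16}) transferred from $u$ to the approximants.
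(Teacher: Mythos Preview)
Your proof is correct, and it takes a genuinely different route from the paper's. The paper proceeds by approximating $\mathcal{C}$ from inside by smooth convex epigraphs $G_k$ (the device from \cite{CRS}), applies the divergence theorem of Lemma~\ref{le:2.3} on each $G_k\cap B_R^{\widehat{H}_0}$, passes to the limit $k\to\infty$ using dominated convergence on the lateral boundary (where $\langle a(\nabla u),\nu\rangle=0$ a.e.) to obtain the flux identity $\int_{\mathcal{C}\cap\partial B_R^{\widehat{H}_0}}\langle a(\nabla u),-\nu\rangle\,\mathrm{d}\mathcal{H}^{N-1}=\int_{\mathcal{C}\cap B_R^{\widehat{H}_0}}e^u\,\mathrm{d}x$, and only then inserts the asymptotics of Proposition~\ref{Prop:3.2} to identify the limit as $N\beta^{N-1}\mathcal{L}^N(B_1^{\widehat{H}_0}\cap\mathcal{C})$.

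Your approach sidesteps the approximation entirely by exploiting that the very definition of weak solution in the paper allows test functions $\psi\in W^{1,N}_0(\Omega)\cap L^\infty(\Omega)$ with $\Omega$ any bounded subset of $\mathbb{R}^N$ (not just of $\mathcal{C}$), so the Neumann condition is already encoded and a radial Wulff cutoff $\phi_R=\eta(\widehat{H}_0/R)$ is admissible with no extra work. This is cleaner and more elementary: the only analytic inputs are Proposition~\ref{Prop:3.2}, the Lipschitz estimate \eqref{aeq:2.5}, and a one-line coarea computation. What the paper's route buys in return is the sharp pointwise flux identity \eqref{eq:5.5} at each finite radius $R$, which is a slightly stronger intermediate statement and is reused verbatim as a template in the later Pohozaev computation of Proposition~\ref{Prop:5.3}; your argument only recovers its $R\to\infty$ limit, which is all that Lemma~\ref{le:5.1} actually needs.
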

\begin{proof}
The main issue is that $u$ is not $C^1$-smooth up to the boundary $\partial\mathcal{C}$, we will argue by approximation arguments as in the proof of \cite[Theorem 1.3]{CRS}.

\smallskip

  Without loss of generality, we assume that the $N$-th direction vector $e_N$ belongs to $\mathcal{C}$ and there exists
  a convex function $g:\mathbb{R}^{N-1}\rightarrow\mathbb{R}$ such that
  \begin{equation*}
    \mathcal{C}=\left\{x_N>g(x_1,\cdots,x_{N-1})\right\}.
  \end{equation*}
  By the proof of \cite[Theorem 1.3]{CRS}, there exists a sequence of smooth convex functions $g_k:\mathbb{R}^{N-1}\rightarrow\mathbb{R}$ ($k=1,2,\cdots$) satisfying the following conditions:\\
  (1) $g_k>g_{k+1}$ in $\overline{B}$, where $B$ is a large ball $B\subset\mathbb{R}^{N-1}$ containing the projection of
  $\overline{B_R^{\widehat{H}_0}}$; \\
  (2) $g_k\rightarrow g$ uniformly in $\overline{B}$;\\
  (3) $\left\lvert \nabla g_k\right\rvert $ is bounded independently of $k$ and $\nabla g_k\rightarrow\nabla g$ a.e. in $\overline{B}$;\\
  (4) the sets $G_k$ defined by
  \begin{equation*}
    G_k:=\left\{x_N>g_k(x_1,\cdots,x_{N-1})\right\}\subseteq\mathcal{C}
  \end{equation*}
satisfy that $G_k\cap B_R^{\widehat{H}_0}$ are open sets with Lipschitz boundaries, $\partial G_k$ and
   $\partial B_R^{\widehat{H}_0}$ intersect transversally and $\{G_k\cap B_R^{\widehat{H}_0}\}_{k=1}^{+\infty}$ approximates $\mathcal{C}\cap B_R^{\widehat{H}_0}$ in $L^1$ sense.

\medskip

In view of $u\in C^1\left(\overline{G_k\cap B_R^{\widehat{H}_0}}\right) $, by choosing $\Omega=G_k\cap B_R^{\widehat{H}_0}$, $f(x)=e^{u(x)}$
   and ${\bf{a}} (x)=-a(\nabla u)$ in Lemma \ref{le:2.3}, we obtain that
   \begin{equation}\label{eq:5.1}
    \int_{\partial G_k\cap B_R^{\widehat{H}_0}}\left\langle a(\nabla u),-\nu\right\rangle \mathrm{d}\mathcal{H}^{N-1}+
    \int_{G_k\cap \partial B_R^{\widehat{H}_0}}\left\langle a(\nabla u),-\nu\right\rangle \mathrm{d}\mathcal{H}^{N-1}
    =  \int_{G_k\cap B_R^{\widehat{H}_0}}e^u \mathrm{d}x.
   \end{equation}
   It follows from \eqref{eq:1.1} and Proposition \ref{Proposition:4.1} that $\left\langle a(\nabla u),\nu\right\rangle=0$ a.e. on $\partial\mathcal{C}$. Since $\nabla u\in C^0(\mathcal{C}\cap B_R^{\widehat{H}_0})\cap L^{\infty}(\mathcal{C}\cap B_R^{\widehat{H}_0})$, then by the dominated
   convergence theorem and properties (2)--(3), we deduce that
   \begin{equation}\label{eq:5.2}
    \int_{\partial G_k\cap B_R^{\widehat{H}_0}}\left\langle a(\nabla u),-\nu\right\rangle \mathrm{d}\mathcal{H}^{N-1}
    \rightarrow \int_{\partial\mathcal{C}\cap B_R^{\widehat{H}_0}}\left\langle a(\nabla u),-\nu\right\rangle \mathrm{d}\mathcal{H}^{N-1}
    =0.
   \end{equation}
   By properties (1) and (2), we know that $G_k\cap \partial(B_R^{\widehat{H}_0})\rightarrow \mathcal{C}\cap \partial(B_R^{\widehat{H}_0})$, thus we obtain that
   \begin{equation}\label{eq:5.3}
    \int_{G_k\cap \partial B_R^{\widehat{H}_0}}\left\langle a(\nabla u),-\nu\right\rangle \mathrm{d}\mathcal{H}^{N-1}
    \rightarrow\int_{\mathcal{C}\cap \partial B_R^{\widehat{H}_0}}\left\langle a(\nabla u),-\nu\right\rangle \mathrm{d}\mathcal{H}^{N-1}.
   \end{equation}
   Moreover, since $G_k\cap B_R^{\widehat{H}_0}\rightarrow\mathcal{C}\cap B_R^{\widehat{H}_0}$ in $L^1$ sense, we know that
   \begin{equation}\label{eq:5.4}
    \int_{G_k\cap B_R^{\widehat{H}_0}}e^u \mathrm{d}x\rightarrow \int_{\mathcal{C}\cap B_R^{\widehat{H}_0}}e^u \mathrm{d}x.
   \end{equation}
Thus, by combining \eqref{eq:5.1}, \eqref{eq:5.2}, \eqref{eq:5.3} and \eqref{eq:5.4}, we deduce that
\begin{equation}\label{eq:5.5}
    \int_{\mathcal{C}\cap \partial B_R^{\widehat{H}_0}}\left\langle a(\nabla u),-\nu\right\rangle \mathrm{d}\mathcal{H}^{N-1}=
    \int_{\mathcal{C}\cap B_R^{\widehat{H}_0}}e^u \mathrm{d}x.
\end{equation}

By Proposition \ref{Prop:3.2}, we obtain that
\begin{equation*}
    \nabla u=-\beta \frac{\nabla \widehat{H}_0}{\widehat{H}_0}+o\left(\frac{1}{\widehat{H}_0}\right),
\end{equation*}
and
\begin{equation*}
    H(\nabla u)=\frac{\beta}{\widehat{H}_0}+o\left(\frac{1}{\widehat{H}_0}\right)
\end{equation*}
uniformly for $x\in\partial B_R^{\widehat{H}_0}$, as $R\rightarrow+\infty$. By \eqref{eq:5.5}, we deduce that
\begin{equation}\label{eq:5.6}
    \begin{aligned}
        \int_{\mathcal{C}\cap B_R^{\widehat{H}_0}}e^u \mathrm{d}x
            &=\int_{\mathcal{C}\cap \partial B_R^{\widehat{H}_0}}H^{N-1}(\nabla u)\left\langle \nabla H(\nabla u),-\nu\right\rangle \mathrm{d}\mathcal{H}^{N-1}\\
            &=\int_{\mathcal{C}\cap \partial B_R^{\widehat{H}_0}}H^{N-1}(\nabla u)\left\langle \nabla H(\nabla u),-\frac{\nabla \widehat{H}_0}{\lvert \nabla\widehat{H}_0\rvert} \right\rangle \mathrm{d}\mathcal{H}^{N-1}\\
            &=\frac{R}{\beta}\int_{\mathcal{C}\cap \partial B_R^{\widehat{H}_0}}H^{N-1}(\nabla u)\left\langle \nabla H(\nabla u),\frac{\nabla u+o(\frac{1}{R})}{\lvert \nabla\widehat{H}_0\rvert} \right\rangle \mathrm{d}\mathcal{H}^{N-1}.
    \end{aligned}
\end{equation}
Since $\left\langle \nabla H(\xi),\xi\right\rangle=H(\xi)$ due to the positive homogeneity of degree $1$ of $H$, for every $x\in\partial B_R^{\widehat{H}_0}$, we have
\begin{equation}\label{eq:5.7}
    \begin{aligned}
    \left\langle \nabla H(\nabla u),\frac{\nabla u+o(\frac{1}{R})}{\lvert \nabla\widehat{H}_0\rvert} \right\rangle
    &=\frac{1}{\lvert \nabla\widehat{H}_0\rvert}\left\langle \nabla H(\nabla u),\nabla u\right\rangle
    +\frac{\lvert \nabla H(\nabla u)\rvert }{\lvert \nabla\widehat{H}_0\rvert }o\left(\frac{1}{R}\right)\\
    &=\frac{1}{\lvert \nabla\widehat{H}_0\rvert} H(\nabla u)+\frac{\left\lvert \nabla H(\nabla u)\right\rvert }{\lvert \nabla\widehat{H}_0\rvert }o\left(\frac{1}{R}\right).
    \end{aligned}
\end{equation}
Moreover, recalling \eqref{norm0}, one has
\begin{equation*}
        \frac{1}{C_{H}}|x|\leq H_0(x)\leq \frac{1}{c_{H}}|x|, \qquad \forall \,\, x\in\mathbb{R}^{N}.
\end{equation*}
Thus by letting $x=\nabla H(\xi)$, we infer from $H_0(\nabla H(\xi))=1$ (see Lemma \ref{8,le:2.2}) that
\begin{equation}\label{lub}
    c_{H}\leq \left\lvert \nabla H(\xi)\right\rvert \leq C_{H}, \qquad \forall \,\, \xi\in\mathbb{R}^{N},
\end{equation}
where the constants $C_{H}:=\max\limits_{\xi\in \mathbb{S}^{N-1}}H(\xi)\geq c_{H}:=\min\limits_{\xi\in \mathbb{S}^{N-1}}H(\xi)>0$ and both are independent of $R$ and $x\in\partial B_R^{\widehat{H}_0}$. Similarly, by letting $\xi=\nabla \widehat{H}_0(x)$ in the inequality
$$c_{H}|\xi|=|\xi|\min\limits_{\mathbb{S}^{N-1}}\widehat{H}\leq \widehat{H}(\xi)\leq|\xi|\max\limits_{\mathbb{S}^{N-1}}\widehat{H}=C_{H}|\xi|, \qquad \forall \,\, \xi\in\mathbb{R}^{N},$$
we infer from $\widehat{H}(\nabla \widehat{H}_0(x))=1$ (see e.g. Lemma 2.3 in \cite{CFV}) that
 \begin{equation}\label{lub0}
    \frac{1}{C_{H}}\leq \lvert \nabla \widehat{H}_0(x)\rvert \leq \frac{1}{c_{H}},\qquad \forall \,\, x\in\mathbb{R}^{N},
 \end{equation}
where the positive constants $c_{H}$ and $C_{H}$ are independent of $R$ and $x\in\partial B_R^{\widehat{H}_0}$. From \eqref{lub} with $\xi=\nabla u$ and \eqref{lub0}, it follows that $c_{H}^{2}\leq\frac{\left\lvert \nabla H(\nabla u)\right\rvert }{\left\lvert \nabla\widehat{H}_0\right\rvert}\leq C_{H}^{2}$ is uniformly bounded, thus we can continue to simplify \eqref{eq:5.7} as follows:
 \begin{equation*}
    \begin{aligned}
        \left\langle \nabla H(\nabla u),\frac{\nabla u+o(\frac{1}{R})}{\lvert \nabla\widehat{H}_0\rvert} \right\rangle
        &=\frac{1}{\lvert \nabla\widehat{H}_0\rvert} H(\nabla u)+o\left(\frac{1}{R}\right)\\
        &=\frac{1}{\lvert \nabla\widehat{H}_0\rvert}H\left(-\beta \frac{\nabla \widehat{H}_0}{\widehat{H}_0}\right)+o\left(\frac{1}{R}\right)\\
        &=\frac{\beta}{R}\widehat{H}(\nu)+o\left(\frac{1}{R}\right)
    \end{aligned}
 \end{equation*}
uniformly for $x\in\partial B_R^{\widehat{H}_0}$, as $R\rightarrow+\infty$. Then by \eqref{eq:5.6}, we have
\begin{equation}\label{eq:5.8}
    \begin{aligned}
        \int_{\mathcal{C}\cap B_R^{\widehat{H}_0}}e^u \mathrm{d}x
            &=\frac{R}{\beta}\int_{\mathcal{C}\cap \partial B_R^{\widehat{H}_0}}H^{N-1}(\nabla u)\left\langle \nabla H(\nabla u),\frac{\nabla u+o\left(\frac{1}{R}\right)}{\lvert \nabla\widehat{H}_0\rvert} \right\rangle \mathrm{d}\mathcal{H}^{N-1}\\
            &=\frac{R}{\beta}\int_{\mathcal{C}\cap \partial B_R^{\widehat{H}_0}}\left(\frac{\beta}{R}+o\left(\frac{1}{R}\right)\right)^{N-1}\left(\frac{\beta}{R}\widehat{H}(\nu)+o\left(\frac{1}{R}\right)\right)\mathrm{d}\mathcal{H}^{N-1}  \\
            &=\frac{1}{R^{N-1}}\int_{\mathcal{C}\cap \partial B_R^{\widehat{H}_0}}\left(\beta^{N-1}\widehat{H}(\nu)+o_{R}(1)\right)\mathrm{d}\mathcal{H}^{N-1}.
    \end{aligned}
\end{equation}
Since
\begin{equation*}
    o\left(\frac{1}{R^{N-1}}\int_{\mathcal{C}\cap \partial B_R^{\widehat{H}_0}}\mathrm{d}\mathcal{H}^{N-1}\right)
    =o\left(\int_{\mathcal{C}\cap \partial B_1^{\widehat{H}_0}}\mathrm{d}\mathcal{H}^{N-1}\right)=o_{R}(1),
\end{equation*}
as $R\rightarrow+\infty$, and
\begin{equation*}
    \begin{aligned}
        \frac{\beta^{N-1}}{R^{N-1}}\int_{\mathcal{C}\cap \partial B_R^{\widehat{H}_0}}\widehat{H}(\nu)\mathrm{d}\mathcal{H}^{N-1}
        &=\beta^{N-1}\int_{\mathcal{C}\cap \partial B_1^{\widehat{H}_0}}\widehat{H}(\nu)\mathrm{d}\mathcal{H}^{N-1}\\
        &=\beta^{N-1}P_{\widehat{H}}(B_1^{\widehat{H}_0};\mathcal{C})\\
        &=\beta^{N-1}N\mathcal{L}^{N}(B_1^{\widehat{H}_0}\cap\mathcal{C}),
    \end{aligned}
\end{equation*}
then by \eqref{eq:5.8}, we have
\begin{equation*}
    \int_{\mathcal{C}\cap B_R^{\widehat{H}_0}}e^u \mathrm{d}x=\beta^{N-1}N\mathcal{L}^{N}(B_1^{\widehat{H}_0}\cap\mathcal{C})+o_{R}(1),\qquad \text{as}\,\, R\rightarrow+\infty.
\end{equation*}
Let $R\rightarrow+\infty$, we obtain that
\begin{equation*}
    \int_{\mathcal{C}}e^u \mathrm{d}x=\beta^{N-1}N\mathcal{L}^{N}(B_1^{\widehat{H}_0}\cap\mathcal{C}),
\end{equation*}
which implies $\beta=\beta_0$, thus we complete the proof.
\end{proof}

By using the the anisotropic isoperimetric inequality inside convex cones, we can derive the following a priori sharp lower bound estimate on the total mass $\int_{\mathcal{C}}e^u \mathrm{d}x$.
\begin{lem}\label{le:5.2}
    Let $u$ be a solution of \eqref{eq:1.1}, then
    \begin{equation}\label{ineq:5.9}
        \int_{\mathcal{C}}e^u \mathrm{d}x\geq c_N\mathcal{L}^{N}(\mathcal{C}\cap B_1^{\widehat{H}_0}),
    \end{equation}
    where $c_N=N(\frac{N^2}{N-1})^{N-1}$.
\end{lem}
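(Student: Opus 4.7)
The plan is to derive a differential inequality for the super-level sets of $u$ and integrate it over all levels. Define
\begin{equation*}
V(t) := \mathcal{L}^N\big(\{u>t\}\cap\mathcal{C}\big), \qquad \mu(t) := \int_{\{u>t\}\cap\mathcal{C}} e^u \, \mathrm{d}x,
\end{equation*}
both of which are finite for every $t\in\mathbb{R}$ by Proposition \ref{prop:3.1} and the finite-mass assumption, with $\mu(+\infty)=0$ and $\mu(-\infty) = M := \int_{\mathcal{C}} e^u \, \mathrm{d}x$. For a.e.\ regular value $t$ (Sard), I would combine the co-area formula with the divergence theorem (Lemma \ref{le:2.3}) applied to $\{u>t\}\cap\mathcal{C}$, noting that the boundary condition $a(\nabla u)\cdot\nu=0$ on $\partial\mathcal{C}$ kills the lateral contribution and that $\langle a(\xi),\xi\rangle = H^N(\xi)$ by Euler's identity, to arrive at
\begin{equation*}
\mu(t) = \int_{\{u=t\}\cap\mathcal{C}} \frac{H^N(\nabla u)}{|\nabla u|}\, \mathrm{d}\mathcal{H}^{N-1}, \qquad -V'(t) = \int_{\{u=t\}\cap\mathcal{C}} \frac{\mathrm{d}\mathcal{H}^{N-1}}{|\nabla u|}.
\end{equation*}

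Next I apply the anisotropic isoperimetric inequality in convex cones (Theorem \ref{thm1}) with the gauge $\widehat{H}(\xi) := H(-\xi)$, which is dual to $\widehat{H}_0$. Since the exterior unit normal to $\{u>t\}$ on its level set equals $-\nabla u/|\nabla u|$, this yields
\begin{equation*}
P_{\widehat{H}}\big(\{u>t\};\mathcal{C}\big) = \int_{\{u=t\}\cap\mathcal{C}} \frac{H(\nabla u)}{|\nabla u|}\, \mathrm{d}\mathcal{H}^{N-1} \geq N\, \mathcal{L}^N\big(B_1^{\widehat{H}_0}\cap\mathcal{C}\big)^{\frac{1}{N}} V(t)^{\frac{N-1}{N}}.
\end{equation*}
Splitting $H(\nabla u)/|\nabla u| = \big(H^N(\nabla u)/|\nabla u|\big)^{1/N}\cdot\big(1/|\nabla u|\big)^{(N-1)/N}$ and applying H\"older's inequality to the integral on the left produces the key differential inequality
\begin{equation*}
\kappa\, V(t)^{N-1} \leq \mu(t)\,(-V'(t))^{N-1}, \qquad \kappa := N^N \mathcal{L}^N\big(B_1^{\widehat{H}_0}\cap\mathcal{C}\big).
\end{equation*}

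To collapse this onto $M$, I would take the $(N-1)$-th root and use $-\mu'(t) = e^t(-V'(t))$ (once more from co-area) to rewrite it as
\begin{equation*}
\kappa^{\frac{1}{N-1}} e^t V(t) \leq \mu(t)^{\frac{1}{N-1}}(-\mu'(t)) = -\frac{N-1}{N}\frac{\mathrm{d}}{\mathrm{d}t}\Big[\mu(t)^{\frac{N}{N-1}}\Big].
\end{equation*}
Integrating over $t\in\mathbb{R}$, the left-hand side equals $\kappa^{1/(N-1)}M$ by Fubini's theorem (since $\int_{\mathbb{R}} e^t V(t)\,\mathrm{d}t = \int_{\mathcal{C}} e^{u(x)}\,\mathrm{d}x = M$), while the right-hand side telescopes to $\tfrac{N-1}{N}M^{N/(N-1)}$. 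Rearranging the resulting scalar inequality gives $M \geq (N/(N-1))^{N-1}\kappa = c_N\,\mathcal{L}^N(\mathcal{C}\cap B_1^{\widehat{H}_0})$, which is precisely \eqref{ineq:5.9}.

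The main obstacle is the rigorous justification of the co-area and divergence identities for $\mu(t)$ and $-V'(t)$ on the non-smooth domain $\{u>t\}\cap\mathcal{C}$. I expect to handle this through the smooth-subcone approximation scheme already used in the proof of Lemma \ref{le:5.1}: apply Lemma \ref{le:2.3} on $\{u>t\}\cap\mathcal{C}_k$ with smooth convex cones $\mathcal{C}_k\subset\mathcal{C}$ whose boundaries cross generic level sets transversally, then pass to the limit, using the sharp asymptotic estimates of Proposition \ref{Prop:3.2} to control the tails at infinity and the second-order Sobolev regularity $a(\nabla u)\in W^{1,2}_{\mathrm{loc}}(\overline{\mathcal{C}})$ from Proposition \ref{Proposition:4.1} to ensure well-defined traces of $a(\nabla u)$ on the level sets.
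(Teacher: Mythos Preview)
Your proposal is correct and follows essentially the same route as the paper: identify $\mu(t)$ and $-V'(t)$ via the divergence theorem and the co-area formula, combine H\"older's inequality with the anisotropic isoperimetric inequality in cones (Theorem \ref{thm1} applied to $\widehat{H}$), obtain the differential inequality for $\mu(t)^{N/(N-1)}$, and integrate using Fubini. The only cosmetic difference is that the paper applies H\"older first and the isoperimetric inequality second, whereas you reverse the order; the resulting inequality is identical.

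One remark on the technical justification: the paper does \emph{not} invoke the smooth-cone approximation, Proposition \ref{Prop:3.2}, or Proposition \ref{Proposition:4.1} here. It argues directly that, since $u\in C^{1,\theta}_{loc}(\mathcal{C})$ and the critical set $\{\nabla u=0\}$ is Lebesgue-null (by \cite[Corollary 1.7]{ACF}), the super-level sets $\Omega_t$ are piecewise smooth for a.e.\ $t$, so Lemma \ref{le:2.3} applies to $\Omega_t$ as stated; the lateral boundary term on $\partial\mathcal{C}$ vanishes by the Neumann condition. Your approximation scheme would also work, but it is heavier than necessary for this step.
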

\begin{proof}
  Since $u\in C^{1,\theta}_{loc}(\mathcal{C})$, then by \cite[Corollary 1.7]{ACF}, we obtain that
  $\mathcal{Z}_k:=\{x\in B_k(0)\cap\mathcal{C}:\nabla u(x)=0\}$ is a null set for all $k\in\mathbb{N}$, i.e., $\mathcal{L}^N(\mathcal{Z}_k)=0$ for
   all $k\in\mathbb{N}$. By $u\in C^{1,\theta}_{loc}(\mathcal{C})$, we deduce further that the set
   \begin{equation*}
    \{t\in\mathbb{R}: \exists x\in\mathcal{C}\ {\rm s.t.}\ u(x)=t, \nabla u(x)=0\}=\bigcup \limits_{k\in\mathbb{N}}u(\mathcal{Z}_k)
   \end{equation*}
   is a null set in $\mathbb{R}^1$. Moreover, $\Omega_t:=\{x\in\mathcal{C}:u(x)>t\}$ is a piecewise smooth set for a.e. $t\in(-\infty,t_0)$, where
   $t_0:=\sup\limits_{\mathcal{C}}u$, and $\Omega_t$ has a bounded Lebesgue measure for each $t\in(-\infty,t_0)$ in view
   of $\int_{\mathcal{C}}e^u\mathrm{d}x<+\infty$. By the divergence theorem in Lemma \ref{le:2.3}, we have
   \begin{equation}\label{eq:5.10}
    \begin{aligned}
        \int_{\Omega_t}e^u \mathrm{d}x
        &=-\int_{\partial\Omega_t}\left\langle a(\nabla u),\nu\right\rangle \mathrm{d}\mathcal{H}^{N-1}\\
        &=-\int_{\{x\in\partial\mathcal{C}:u(x)\geq t\}}\left\langle a(\nabla u),\nu\right\rangle \mathrm{d}\mathcal{H}^{N-1}-
        \int_{\{x\in\mathcal{C}:u(x)=t\}}\left\langle a(\nabla u),\nu\right\rangle \mathrm{d}\mathcal{H}^{N-1}\\
        &=- \int_{\{x\in\mathcal{C}:u(x)=t\}}\left\langle a(\nabla u),\nu\right\rangle \mathrm{d}\mathcal{H}^{N-1}.
    \end{aligned}
   \end{equation}
Note that $\nu=-\frac{\nabla u}{|\nabla u|}$ on $\{x\in\mathcal{C}:u(x)=t\}$ , then by \eqref{eq:5.10} and $\left\langle \nabla H(\xi),\xi\right\rangle=H(\xi)$, we have
\begin{equation*}
    \begin{aligned}
        \int_{\Omega_t}e^u \mathrm{d}x
        &=- \int_{\{x\in\mathcal{C}:u(x)=t\}}\left\langle a(\nabla u),\nu\right\rangle \mathrm{d}\mathcal{H}^{N-1}\\
        &=\int_{\{x\in\mathcal{C}:u(x)=t\}}H^{N-1}(\nabla u)\left\langle \nabla H(\nabla u),\frac{\nabla u}{|\nabla u|}\right\rangle \mathrm{d}\mathcal{H}^{N-1}\\
        &=\int_{\{x\in\mathcal{C}:u(x)=t\}}\frac{H^{N}(\nabla u)}{|\nabla u|}\mathrm{d}\mathcal{H}^{N-1}.
    \end{aligned}
\end{equation*}
Therefore, we obtain that
\begin{equation}\label{eq:5.11}
    \int_{\Omega_t}e^u \mathrm{d}x=\int_{\{x\in\mathcal{C}:u(x)=t\}}\frac{H^{N}(\nabla u)}{|\nabla u|}\mathrm{d}\mathcal{H}^{N-1}.
\end{equation}

\smallskip

For $\varepsilon\in(0,1)$, let $\Omega_{t,\varepsilon}:=\{x\in\mathcal{C}:u(x)>t, |\nabla u|>\varepsilon\}$,
 $\partial\mathcal{C}_t:=\{x\in\mathcal{C}:u(x)=t\}$. Then from the co-area formula, we deduce that
\begin{equation*}
    \mathcal{L}^N(\Omega_{t,\varepsilon})=\int_{t}^{t_0}\mathrm{d}s\int_{\partial\mathcal{C}_s\cap\{|\nabla u|>\varepsilon\}}\frac{1}{|\nabla u|}\mathrm{d}\mathcal{H}^{N-1}.
\end{equation*}
Since $\mathcal{L}^{N}(\Omega_t)<+\infty$, letting $\varepsilon\rightarrow0$ and by the monotone convergence theorem, we deduce that
\begin{equation*}
    \mathcal{L}^N(\Omega_t)=\int_{t}^{t_0}\mathrm{d}s\int_{\partial\mathcal{C}_s}\frac{1}{|\nabla u|}\mathrm{d}\mathcal{H}^{N-1} \quad\, \text{for\,\, a.e.}\,\, t<t_0.
\end{equation*}
Consequently, we have
\begin{equation}\label{eq:5.12}
    -\frac{d}{dt}\mathcal{L}^N(\Omega_t)=\int_{\partial\mathcal{C}_t}\frac{1}{|\nabla u|}\mathrm{d}\mathcal{H}^{N-1} \quad\, \text{for\,\, a.e.}\,\, t<t_0.
\end{equation}
Similarly, we have
\begin{equation*}
    \int_{\Omega_t}e^u\mathrm{d}x=\int_{t}^{t_0}\mathrm{d}s\int_{\partial\mathcal{C}_s}\frac{e^s}{|\nabla u|}\mathrm{d}\mathcal{H}^{N-1} \quad\, \text{for\,\, a.e.}\,\, t<t_0,
\end{equation*}
and hence, it holds
\begin{equation}\label{eq:5.13}
    -\frac{d}{dt}\left(\int_{\Omega_t}e^u\mathrm{d}x\right)=e^t\int_{\partial\mathcal{C}_t}\frac{1}{|\nabla u|}\mathrm{d}\mathcal{H}^{N-1} \quad\, \text{for\,\, a.e.}\,\, t<t_0.
\end{equation}
In view of \eqref{eq:5.11}, \eqref{eq:5.12} and \eqref{eq:5.13}, we deduce that
\begin{equation}\label{ineq:5.14}
    \begin{aligned}
        -\frac{d}{dt}\left(\int_{\Omega_t}e^u\mathrm{d}x\right)^{\frac{N}{N-1}}
        &=\frac{N}{N-1}\left(\int_{\partial\mathcal{C}_t}\frac{H^N(\nabla u)}{|\nabla u|}\mathrm{d}\mathcal{H}^{N-1}\right)^{\frac{1}{N-1}}
        e^t\int_{\partial\mathcal{C}_t}\frac{1}{|\nabla u|}\mathrm{d}\mathcal{H}^{N-1}\\
        &\geq  \frac{N}{N-1}e^t \left( \int_{\partial\mathcal{C}_t}\frac{H(\nabla u)}{|\nabla u|}\mathrm{d}\mathcal{H}^{N-1}\right)^{\frac{N}{N-1}}\\
        &=\frac{N}{N-1}e^t \left( \int_{\partial\mathcal{C}_t}H(-\nu)\mathrm{d}\mathcal{H}^{N-1}\right)^{\frac{N}{N-1}}\\
        &=\frac{N}{N-1}e^t \left( \int_{\partial\mathcal{C}_t}\widehat{H}(\nu)\mathrm{d}\mathcal{H}^{N-1}\right)^{\frac{N}{N-1}}.
    \end{aligned}
\end{equation}
Since $\int_{\partial\mathcal{C}_t}\widehat{H}(\nu)\mathrm{d}\mathcal{H}^{N-1}=\int_{\partial\Omega_t\cap\mathcal{C}}\widehat{H}(\nu)\mathrm{d}\mathcal{H}^{N-1}$, by Theorem \ref{thm1}, we have
\begin{equation}\label{ineq:5.15}
    \begin{aligned}
        \int_{\partial\mathcal{C}_t}\widehat{H}(\nu)\mathrm{d}\mathcal{H}^{N-1}&=P_{\widehat{H}}(\Omega_t;\mathcal{C})\\
        &\geq \left(\frac{\mathcal{L}^N(\Omega_t\cap\mathcal{C})}{\mathcal{L}^N(B_1^{\widehat{H}_0}\cap\mathcal{C})}\right)
        ^{\frac{N-1}{N}} P_{\widehat{H}}(B_1^{\widehat{H}_0};\mathcal{C})\\
        &=N\mathcal{L}^N(B_1^{\widehat{H}_0}\cap\mathcal{C})\left(\frac{\mathcal{L}^N(\Omega_t\cap\mathcal{C})}{\mathcal{L}^N(B_1^{\widehat{H}_0}\cap\mathcal{C})}\right)
        ^{\frac{N-1}{N}},
    \end{aligned}
\end{equation}
where we have used $ P_{\widehat{H}}(B_1^{\widehat{H}_0};\mathcal{C})=N\mathcal{L}^N(B_1^{\widehat{H}_0}\cap\mathcal{C})$,
see \cite[Formula (1.14)]{CRS}. Combining \eqref{ineq:5.14} with \eqref{ineq:5.15}, we deduce that
\begin{equation}\label{ineq:5.16}
    \begin{aligned}
        -\frac{d}{dt}\left(\int_{\Omega_t}e^u\mathrm{d}x\right)^{\frac{N}{N-1}}
        &\geq\frac{N}{N-1}e^t \left( \int_{\partial\mathcal{C}_t}\widehat{H}(\nu)\mathrm{d}\mathcal{H}^{N-1}\right)^{\frac{N}{N-1}}\\
        &\geq\frac{N}{N-1}e^t N^{\frac{N}{N-1}}\frac{\left(\mathcal{L}^N(B_1^{\widehat{H}_0}\cap\mathcal{C})\right)^{\frac{N}{N-1}} }{\mathcal{L}^N(B_1^{\widehat{H}_0}\cap\mathcal{C})}
        \mathcal{L}^N(\Omega_t\cap\mathcal{C})\\
        &=\frac{N^2}{N-1}e^t N^{\frac{1}{N-1}}\left(\mathcal{L}^N(B_1^{\widehat{H}_0}\cap\mathcal{C})\right)^{\frac{1}{N-1}}
        \mathcal{L}^N(\Omega_t\cap\mathcal{C}).
    \end{aligned}
\end{equation}
Letting $S_N:=\frac{N^2}{N-1}N^{\frac{1}{N-1}}\left(\mathcal{L}^N(B_1^{\widehat{H}_0}\cap\mathcal{C})\right)^{\frac{1}{N-1}}$, and
then integrating \eqref{ineq:5.16} from $-\infty$ to $t_0$, we deduce that
\begin{equation*}
    \begin{aligned}
        \left(\int_{\mathcal{C}}e^u\mathrm{d}x\right)^{\frac{N}{N-1}}
        &\geq S_N\int_{-\infty}^{t_0}e^t\mathrm{d}t\int_{\mathbb{R}^N}\chi _{\Omega_t\cap\mathcal{C}}\mathrm{d}x\\
        &= S_N \int_{\mathbb{R}^N}\int_{-\infty}^{t_0}e^t\chi_{\Omega_t\cap\mathcal{C}}\mathrm{d}t\mathrm{d}x\\
        &=S_N \int_{\mathcal{C}}e^u\mathrm{d}x.
    \end{aligned}
\end{equation*}
Thus we obtain \eqref{ineq:5.9} and hence complete the proof.
\end{proof}

Furthermore, by the Pohozaev type identity and Proposition \ref{Prop:3.2}, we will prove the following quantization result on the total mass $\int_{\mathcal{C}}e^{u}\mathrm{d}x$, and hence the equality in Lemma \ref{le:5.2} holds for any weak solution of \eqref{eq:1.1}.
\begin{prop}\label{Prop:5.3}
    Let $u$ be a solution of \eqref{eq:1.1}, then
    \begin{equation}\label{eq:5.17}
        \int_{\mathcal{C}}e^u \mathrm{d}x= c_N\mathcal{L}^{N}(\mathcal{C}\cap B_1^{\widehat{H}_0}),
    \end{equation}
    where $c_N=N\left(\frac{N^2}{N-1}\right)^{N-1}$.
\end{prop}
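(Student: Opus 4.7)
The plan is to establish a Pohozaev-type identity on the truncated domain $\Omega_R:=\mathcal{C}\cap B_R^{\widehat{H}_0}$, let $R\to+\infty$ using the sharp asymptotics of Proposition \ref{Prop:3.2}, and match the resulting formula against Lemma \ref{le:5.1} to pin down the precise value of $\beta$, from which \eqref{eq:5.17} drops out.

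First I would multiply $-\Delta^H_Nu=e^u$ by the Pohozaev multiplier $\langle x,\nabla u\rangle$ and integrate over $\Omega_R$. The second-order regularity $a(\nabla u)\in W^{1,2}_{loc}(\overline{\mathcal{C}})$ from Proposition \ref{Proposition:4.1} together with the cone-smoothing $G_k\uparrow\mathcal{C}$ used in Lemma \ref{le:5.1} justifies the required integration by parts. Using $\langle a(\xi),\xi\rangle=H^N(\xi)$ and $a_i(\nabla u)\partial_j\partial_i u=\tfrac{1}{N}\partial_jH^N(\nabla u)$, together with cancellation of the bulk term $\int_{\Omega_R}H^N(\nabla u)\mathrm{d}x$, yields the identity
\begin{equation*}
N\!\int_{\Omega_R}\!e^u\mathrm{d}x=\!\int_{\partial\Omega_R}\!\!\Big[e^u-\tfrac{1}{N}H^N(\nabla u)\Big]\langle x,\nu\rangle\,\mathrm{d}\mathcal{H}^{N-1}+\!\int_{\partial\Omega_R}\!\!\langle a(\nabla u),\nu\rangle\langle x,\nabla u\rangle\,\mathrm{d}\mathcal{H}^{N-1}.
\end{equation*}
Since $\mathcal{C}$ is a cone with vertex at the origin, one has $\langle x,\nu\rangle=0$ on $\partial\mathcal{C}\cap B_R^{\widehat{H}_0}$, and by \eqref{eq:1.1} the Neumann term $\langle a(\nabla u),\nu\rangle$ also vanishes there. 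Hence only the spherical part $\mathcal{C}\cap\partial B_R^{\widehat{H}_0}$ contributes.

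Next I would insert the asymptotics $\nabla u=-\beta\nabla\widehat{H}_0/\widehat{H}_0+o(1/\widehat{H}_0)$ and $H(\nabla u)=\beta/\widehat{H}_0+o(1/\widehat{H}_0)$ from Proposition \ref{Prop:3.2}. Using the dual identities of Lemma \ref{8,le:2.2} (in the form $\nabla\widehat{H}(\nabla\widehat{H}_0(x))=x/\widehat{H}_0(x)$ and $\langle x,\nabla\widehat{H}_0\rangle=\widehat{H}_0$) one computes to leading order on $\{\widehat{H}_0=R\}$:
\begin{equation*}
a(\nabla u)\cdot\nu\sim-\frac{\beta^{N-1}}{R^{N-1}|\nabla\widehat{H}_0|},\qquad \langle x,\nabla u\rangle\sim-\beta,\qquad H^N(\nabla u)\langle x,\nu\rangle\sim\frac{\beta^N}{R^{N-1}|\nabla\widehat{H}_0|}.
\end{equation*}
The $e^u$ contribution dies like $R^{N-\beta}\to 0$ because $\beta>N$. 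Using the co-area identity $\int_{\mathcal{C}\cap\partial B_1^{\widehat{H}_0}}|\nabla\widehat{H}_0|^{-1}\,\mathrm{d}\mathcal{H}^{N-1}=N\,\mathcal{L}^N(\mathcal{C}\cap B_1^{\widehat{H}_0})$ and sending $R\to+\infty$ gives
\begin{equation*}
N\int_{\mathcal{C}}e^u\,\mathrm{d}x=\Big(\beta^N-\tfrac{1}{N}\beta^N\Big)\cdot N\mathcal{L}^N(\mathcal{C}\cap B_1^{\widehat{H}_0})=(N-1)\beta^N\,\mathcal{L}^N(\mathcal{C}\cap B_1^{\widehat{H}_0}).
\end{equation*}

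Finally, comparing with $\int_{\mathcal{C}}e^u\mathrm{d}x=N\beta^{N-1}\mathcal{L}^N(\mathcal{C}\cap B_1^{\widehat{H}_0})$ from Lemma \ref{le:5.1} (where we used $\beta=\beta_0$), the equation $N\beta^{N-1}=\tfrac{N-1}{N}\beta^N$ forces $\beta=N^2/(N-1)$. Plugging back yields $\int_{\mathcal{C}}e^u\mathrm{d}x=N\bigl(\tfrac{N^2}{N-1}\bigr)^{N-1}\mathcal{L}^N(\mathcal{C}\cap B_1^{\widehat{H}_0})=c_N\mathcal{L}^N(\mathcal{C}\cap B_1^{\widehat{H}_0})$. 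Note that this matches the sharp lower bound of Lemma \ref{le:5.2}, so all the intermediate inequalities (the H\"older step in \eqref{ineq:5.14} and the anisotropic isoperimetric inequality in \eqref{ineq:5.15}) must in fact be equalities for a.e.\ $t$, which will be exploited in Section 6. The main technical obstacle is the rigorous derivation of the Pohozaev identity up to $\partial\mathcal{C}$, since $u\notin C^2(\overline{\mathcal{C}})$ and $\partial\mathcal{C}$ is merely Lipschitz; this is handled by combining the regularized operators $a^l=a\ast\phi_l$ and smooth approximating cones $G_k\uparrow\mathcal{C}$ from the proofs of Proposition \ref{Proposition:4.1} and Lemma \ref{le:5.1}, together with uniform control of the decaying boundary integrals provided by the asymptotics of Proposition \ref{Prop:3.2}.
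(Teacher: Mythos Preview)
Your proposal is correct and follows essentially the same approach as the paper: derive the Pohozaev identity on $\mathcal{C}\cap B_R^{\widehat{H}_0}$ via the smooth-cone approximation $G_k\uparrow\mathcal{C}$, drop the $\partial\mathcal{C}$ contributions using $\langle x,\nu\rangle=0$ and the Neumann condition, plug in the asymptotics from Proposition \ref{Prop:3.2} to get $N\int_{\mathcal{C}}e^u=(N-1)\beta^N\mathcal{L}^N(\mathcal{C}\cap B_1^{\widehat{H}_0})$, and then combine with Lemma \ref{le:5.1} to solve for $\beta=N^2/(N-1)$. One minor remark: the paper does not invoke the $a^l=a\ast\phi_l$ regularization here, since the Pohozaev identity \eqref{Poho} is already available for $C^1$ solutions on Lipschitz domains and $u\in C^1(\overline{G_k\cap B_R^{\widehat{H}_0}})$; the cone approximation $G_k$ alone suffices.
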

\begin{proof}
Let $\Omega\subset\mathbb{R}^{N}$ be a bounded open set with Lipschitz boundary and $u\in C^{1}(\overline{\Omega})$ solves $-\Delta^{H}_{N}u=f$ in $\Omega$, where $f\in L^{1}(\Omega)\cap L^{\infty}_{loc}(\Omega)$, then the following Pohozaev identity (c.f. \cite[Lemma 4.2]{CL2} in the case $p=N$, see also \cite[Lemma 3.3]{E}) holds for any $y\in\mathbb{R}^{N}$:
\begin{equation}\label{Poho}
\begin{aligned}
& -\int_{\Omega} f(x)\langle x-y, \nabla u\rangle \mathrm{d}x \\
= & \int_{\partial \Omega}\left[H^{N-1}(\nabla u)\langle\nabla H(\nabla u), \nu\rangle\langle x-y, \nabla u\rangle-\frac{H^N(\nabla u)}{N}\langle x-y, \nu\rangle\right] \mathrm{d}\mathcal{H}^{N-1},
\end{aligned}
\end{equation}
where $\nu$ is the unit outer normal vector to $\partial\Omega$. For any $R>0$, since $u$ may not belong to $C^1\left(\overline{\mathcal{C}\cap B_R^{\widehat{H}_0}( 0)}\right)$, in order to apply the Pohozaev identity, we need to approximate $\mathcal{C}\cap B_R^{\widehat{H}_0}(0)$ by a sequence of Lipschitz domains $\{G_k\cap B_R^{\widehat{H}_0}(0)\}_{k=1}^{+\infty}$ as in the proof of Lemma \ref{le:5.1}. Due to $u\in C^1\left(\overline{G_k\cap B_R^{\widehat{H}_0}( 0)}\right) $, we can choose $\Omega=G_k\cap B_R^{\widehat{H}_0}(0)$, $y=0$ and $f=e^u$ in the Pohozaev identity \eqref{Poho} and obtain that
     \begin{equation}\label{eq:5.18}
        \begin{aligned}
            &\quad-\int_{G_k\cap B_R^{\widehat{H}_0}( 0)}e^u\left\langle x ,\nabla u\right\rangle \mathrm{d}x\\
            &=\int_{\partial(G_k\cap B_R^{\widehat{H}_0}( 0))} \left[H^{N-1}(\nabla u)
            \left\langle \nabla H(\nabla u),\nu\right\rangle
            \left\langle x ,\nabla u\right\rangle-\frac{H^{N}(\nabla u)}{N}\left\langle x ,\nu\right\rangle \right]\mathrm{d}\mathcal{H}^{N-1}.
        \end{aligned}
     \end{equation}
By the divergence theorem, we have
\begin{equation}\label{eq:5.19}
        -\int_{G_k\cap B_R^{\widehat{H}_0}( 0)}e^u\left\langle x ,\nabla u\right\rangle \mathrm{d}x
        =N\int_{G_k\cap B_R^{\widehat{H}_0}( 0)}e^u \mathrm{d}x
        -\int_{\partial(G_k\cap B_R^{\widehat{H}_0}( 0))}e^u\left\langle x ,\nu\right\rangle \mathrm{d}\mathcal{H}^{N-1}.
\end{equation}
Note that
\begin{equation*}
    \begin{aligned}
        \int_{\partial(G_k\cap B_R^{\widehat{H}_0}( 0))}e^u\left\langle x ,\nu\right\rangle \mathrm{d}\mathcal{H}^{N-1}
        &=\int_{\partial G_k\cap B_R^{\widehat{H}_0}( 0)}e^u\left\langle x ,\nu\right\rangle \mathrm{d}\mathcal{H}^{N-1}\\
        &\quad+\int_{G_k\cap \partial B_R^{\widehat{H}_0}( 0)}e^u\left\langle x ,\nu\right\rangle \mathrm{d}\mathcal{H}^{N-1}.
    \end{aligned}
\end{equation*}
Since $u\in C^1\left(\overline{G_k\cap B_R^{\widehat{H}_0}( 0)}\right) $, by letting $k\rightarrow+\infty$ in \eqref{eq:5.19},
we obtain that
\begin{equation*}
    \begin{aligned}
        -\int_{\mathcal{C}\cap B_R^{\widehat{H}_0}( 0)}e^u\left\langle x ,\nabla u\right\rangle \mathrm{d}x
        &=N\int_{\mathcal{C}\cap B_R^{\widehat{H}_0}( 0)}e^u \mathrm{d}x
        -\int_{\partial\mathcal{C}\cap B_R^{\widehat{H}_0}( 0)}e^u\left\langle x ,\nu\right\rangle \mathrm{d}\mathcal{H}^{N-1}\\
        &\quad-\int_{\mathcal{C}\cap \partial B_R^{\widehat{H}_0}( 0)}e^u\left\langle x ,\nu\right\rangle \mathrm{d}\mathcal{H}^{N-1}.
    \end{aligned}
\end{equation*}
Since $\mathcal{C}=\mathbb{R}^{k}\times\mathcal{\widetilde{C}}$ is a convex cone, where $k\in\{0,\cdots ,N\}$ and $\mathcal{\widetilde{C}}\subset\mathbb{R}^{N-k}$ is an open convex cone with vertex at the origin $0_{\mathbb{R}^{N-k}}$, so $\left\langle x ,\nu\right\rangle =0$ for any $x\in\partial\mathcal{C}$. Then we derive that
\begin{equation}\label{eq:5.20}
    -\int_{\mathcal{C}\cap B_R^{\widehat{H}_0}( 0)}e^u\left\langle x ,\nabla u\right\rangle \mathrm{d}x
    =N\int_{\mathcal{C}\cap B_R^{\widehat{H}_0}( 0)}e^u \mathrm{d}x
    -\int_{\mathcal{C}\cap \partial B_R^{\widehat{H}_0}( 0)}e^u\left\langle x ,\nu\right\rangle \mathrm{d}\mathcal{H}^{N-1}.
\end{equation}
Similarly, by letting $k\rightarrow+\infty$ in \eqref{eq:5.18}, we deduce that
\begin{equation}\label{eq:5.21}
    \begin{aligned}
        &\quad-\int_{\mathcal{C}\cap B_R^{\widehat{H}_0}( 0)}e^u\left\langle x ,\nabla u\right\rangle \mathrm{d}x\\
        &=\int_{\mathcal{C}\cap \partial B_R^{\widehat{H}_0}( 0)} \left[H^{N-1}(\nabla u)
        \left\langle \nabla H(\nabla u),\nu\right\rangle
        \left\langle x ,\nabla u\right\rangle-\frac{H^{N}(\nabla u)}{N}\left\langle x ,\nu\right\rangle \right]\mathrm{d}\mathcal{H}^{N-1},
    \end{aligned}
\end{equation}
where we have used the fact that $a(\nabla u)\cdot \nu=0$ on $\partial\mathcal{C}$ and $\left\langle x ,\nu\right\rangle =0$  for any $x\in\partial\mathcal{C}$. By combining \eqref{eq:5.20} with \eqref{eq:5.21}, we arrive at
\begin{equation}\label{eq:5.22}
    \begin{aligned}
        &\quad N\int_{\mathcal{C}\cap B_R^{\widehat{H}_0}( 0)}e^u \mathrm{d}x
        -\int_{\mathcal{C}\cap \partial B_R^{\widehat{H}_0}( 0)}e^u\left\langle x ,\nu\right\rangle \mathrm{d}\mathcal{H}^{N-1}\\
        & =\int_{\mathcal{C}\cap \partial B_R^{\widehat{H}_0}( 0)} \left[H^{N-1}(\nabla u)
        \left\langle \nabla H(\nabla u),\nu\right\rangle
        \left\langle x ,\nabla u\right\rangle-\frac{H^{N}(\nabla u)}{N}\left\langle x ,\nu\right\rangle \right]\mathrm{d}\mathcal{H}^{N-1}.
    \end{aligned}
\end{equation}

Note that $\nu=\frac{\nabla \widehat{H}_0(x)}{\left\lvert \nabla\widehat{H}_0(x)\right\rvert}$ for $x\in\partial B_R^{\widehat{H}_0}( 0)$, by the computation in the proof of Lemma \ref{le:5.1}, we know that $\left\langle \nabla H(\nabla u),\nu\right\rangle= -\widehat{H}(\nu)+o_{R}(1)$ uniformly for $x\in\partial B_R^{\widehat{H}_0}( 0)$, as $R\rightarrow+\infty$. Since $\nabla u=-\beta_{0}\frac{\nabla \widehat{H}_0}{R}+o(\frac{1}{R})$ uniformly for $x\in\partial B_R^{\widehat{H}_0}( 0)$, as $R\rightarrow+\infty$, thus
  \begin{equation*}
    \begin{aligned}
        \left\langle x,\nabla u\right\rangle
        &=  \left\langle x,-\beta_{0}\frac{\nabla \widehat{H}_0}{R}+o\left(\frac{1}{R}\right)\right\rangle
        =-\frac{\beta_0}{R}\langle x,\nabla \widehat{H}_0\rangle+o_{R}(1)\\
        &=-\frac{\beta_0}{R}\widehat{H}_0(x)+o_{R}(1)=-\beta_0+o_{R}(1)
    \end{aligned}
  \end{equation*}
  uniformly for $x\in\partial B_R^{\widehat{H}_0}( 0)$, as $R\rightarrow+\infty$. Moreover, since $H(\nabla u)=\frac{\beta_0}{R}+o(\frac{1}{R})$
  uniformly for $x\in\partial B_R^{\widehat{H}_0}( 0)$, as $R\rightarrow+\infty$, we have
  \begin{equation}\label{eq:5.23}
    \begin{aligned}
        &\quad\int_{\mathcal{C}\cap \partial B_R^{\widehat{H}_0}( 0)} H^{N-1}(\nabla u)
        \left\langle \nabla H(\nabla u),\nu\right\rangle
        \left\langle x ,\nabla u\right\rangle \mathrm{d}\mathcal{H}^{N-1}\\
        &=\int_{\mathcal{C}\cap \partial B_R^{\widehat{H}_0}( 0)}\left(\frac{\beta_0}{R}+o\left(\frac{1}{R}\right)\right)^{N-1}\left(-\widehat{H}(\nu)+o_{R}(1)\right)
         \left(-\beta_0+o_{R}(1)\right)\mathrm{d}\mathcal{H}^{N-1} \\
         &=\frac{\beta_0^{N}}{R^{N-1}} \int_{\mathcal{C}\cap \partial B_R^{\widehat{H}_0}(0)}\left(\widehat{H}(\nu)+o_{R}(1)\right)\mathrm{d}\mathcal{H}^{N-1}\\
         &=  \beta_0^{N}\int_{\mathcal{C}\cap \partial B_1^{\widehat{H}_0}(0)}\widehat{H}(\nu)\mathrm{d}\mathcal{H}^{N-1}+o_{R}(1)\\
         &=\beta_0^{N}P_{\widehat{H}} \left(B_1^{\widehat{H}_0}(0);\mathcal{C}\right) +o_{R}(1)\\
         &=N\beta_0^{N}\mathcal{L}^N(B_1^{\widehat{H}_0}(0)\cap\mathcal{C})+o_{R}(1)
    \end{aligned}
  \end{equation}
  uniformly for $x\in\partial B_R^{\widehat{H}_0}( 0)$, as $R\rightarrow+\infty$. Since $\nu=\frac{\nabla\widehat{H}_0(x)}{\left\lvert\nabla\widehat{H}_0(x)\right\rvert}$ for $x\in\partial B_R^{\widehat{H}_0}( 0)$ and $\langle x,\nabla \widehat{H}_0(x)\rangle =\widehat{H}_0(x)$, thus $\left\langle x,\nu\right\rangle=\frac{R}{\left\lvert\nabla\widehat{H}_0 \right\rvert}$ on $\partial B_R^{\widehat{H}_0}( 0)$.  Then by co-area formula, we have
   \begin{equation}\label{eq:5.24}
    \begin{aligned}
        &\quad\int_{\mathcal{C}\cap \partial B_R^{\widehat{H}_0}( 0)} \frac{H^{N}(\nabla u)}{N}\left\langle x ,\nu\right\rangle \mathrm{d}\mathcal{H}^{N-1}\\
        &=\frac{1}{N}\left(\frac{\beta_0^N}{R^{N-1}}+o\left(\frac{1}{R^{N-1}}\right)\right)
        \int_{\mathcal{C}\cap \partial B_R^{\widehat{H}_0}( 0)}
        \frac{1}{\lvert\nabla\widehat{H}_0 \rvert }\mathrm{d}\mathcal{H}^{N-1} \\
        &=\frac{1}{N}\left(\frac{\beta_0^N}{R^{N-1}}+o\left(\frac{1}{R^{N-1}}\right)\right)
        \frac{\mathrm{d}}{\mathrm{d}t}\left(\int_{\mathcal{C}\cap B_t^{\widehat{H}_0}( 0)}\mathrm{d}x\right) \bigg|_{t=R}\\
        &=  \frac{1}{N}\left(\frac{\beta_0^N}{R^{N-1}}+o\left(\frac{1}{R^{N-1}}\right)\right)
        \frac{\mathrm{d}}{\mathrm{d}t}\left(t^N\right) \bigg|_{t=R}\cdot\mathcal{L}^N(B_1^{\widehat{H}_0}\cap\mathcal{C})\\
        &=(\beta_0^N+o_{R}(1))\mathcal{L}^N(B_1^{\widehat{H}_0}\cap\mathcal{C})
    \end{aligned}
   \end{equation}
   uniformly for $x\in\partial B_R^{\widehat{H}_0}( 0)$, as $R\rightarrow+\infty$. Moreover, by Proposition \ref{Prop:3.2} and Lemma \ref{le:5.1}, we obtain that
   \begin{equation*}
    e^{u(x)}\leq C\left[\widehat{H}_0(x)\right]^{-\beta_0}
   \end{equation*}
   for some constant $C>0$, as $|x|\rightarrow+\infty$ with $x\in\mathcal{C}$. Finally, it follows from Lemma \ref{le:5.2} that
   $$\beta_0=\left[\frac{\int_{\mathcal{C}}e^u \mathrm{d}x}{N\mathcal{L}^{N}(B_1^{\widehat{H}_0}\cap \mathcal{C})}\right]^{\frac{1}{N-1}}\geq (c_N)^{\frac{1}{N-1}}\geq \frac{N^2}{N-1}>N.$$
   Thus we can infer from $|x|\leq C\widehat{H}_0(x)$ that
   \begin{equation}\label{ineq:5.25}
    \begin{aligned}
        \int_{\mathcal{C}\cap \partial B_R^{\widehat{H}_0}( 0)}e^u\left\lvert \left\langle x ,\nu\right\rangle \right\rvert \mathrm{d}\mathcal{H}^{N-1}
        &\leq C    \int_{\mathcal{C}\cap \partial B_R^{\widehat{H}_0}( 0)}e^u\widehat{H}_0(x)\mathrm{d}\mathcal{H}^{N-1}\\
        &\leq C R^{N-\beta_0}\rightarrow0,\qquad\,\,\,\,\,\, \text{as}\ R\rightarrow+\infty.
    \end{aligned}
   \end{equation}
   By letting $R\rightarrow+\infty$ in \eqref{eq:5.22}, combining \eqref{eq:5.23} with \eqref{eq:5.24} and \eqref{ineq:5.25},
   we deduce that
   \begin{equation*}
    N\int_{\mathcal{C}}e^u \mathrm{d}x=(N-1)\beta_0^N\mathcal{L}^N(B_1^{\widehat{H}_0}\cap\mathcal{C}),
   \end{equation*}
   this concludes our proof of Proposition \ref{Prop:5.3}.
\end{proof}

\section{Proof of our main Theorem 1.1}
\noindent {\bf Proof of Theorem 1.1.} By Lemma \ref{le:5.2} and Proposition \ref{Prop:5.3}, we deduce that the inequalities \eqref{ineq:5.14}
and \eqref{ineq:5.15} are actually equalities. Thus by the characteristics of equalities of the anisotropic isoperimetric inequality and the H$\ddot{\text{o}}$lder inequality, we obtain that, for a.e. $t\leq t_0$, \\
$\bullet $  \quad $\Omega_t=\mathcal{C}\cap B_{R(t)}^{\widehat{H}_0}(x(t))$ for some $R(t)>0$ and $x(t)\in\mathbb{R}^k\times\{0_{\mathbb{R}^{N-k}}\}$,
then $\partial\mathcal{C}_t=\partial B_{R(t)}^{\widehat{H}_0}(x(t))\cap\mathcal{C}$; \\
$\bullet $ \quad  $\frac{H^N(\nabla u)}{\left\lvert \nabla u\right\rvert }$ is a multiple of $\frac{1}{\left\lvert \nabla u\right\rvert}$ on
$\partial\mathcal{C}_t$, then by the continuity of $\nabla u$, we have $H(\nabla u)$ is a positive constant on $\partial\mathcal{C}_t$.

\medskip

Let $M(t):=\int_{\Omega_t}e^u\mathrm{d}x$, then by \eqref{eq:5.11}, we have
\begin{equation}\label{eq:6.1}
    \begin{aligned}
        M(t)
        &=\int_{\partial B_{R(t)}^{\widehat{H}_0}(x(t))\cap\mathcal{C}}\frac{H^N(\nabla u)}{\left\lvert \nabla u\right\rvert }\mathrm{d}\mathcal{H}^{N-1}\\
        &=H^{N-1}(\nabla u)\int_{\partial B_{R(t)}^{\widehat{H}_0}(x(t))\cap\mathcal{C}}\widehat{H}(\nu)\mathrm{d}\mathcal{H}^{N-1}\\
        &=NH^{N-1}(\nabla u)\mathcal{L}^{N}(B_{1}^{\widehat{H}_0}(0)\cap\mathcal{C})R^{N-1}(t).
    \end{aligned}
\end{equation}
Here we have used the fact that $x(t)\in\mathbb{R}^k\times\{0_{\mathbb{R}^{N-k}}\}$, $\mathcal{C}=\mathbb{R}^k\times\widetilde{\mathcal{C}} $ and
$\widehat{H}$ is a positively homogenous function of degree one. Since the inequalities in \eqref{ineq:5.16} are actually equalities, we have
\begin{equation}\label{eq:6.2}
    \begin{aligned}
        \frac{d}{dt}M^{\frac{N}{N-1}}(t)&=\frac{N}{N-1}M^{\frac{1}{N-1}}M^{\prime}(t)\\
        &=-\frac{N^2e^t}{N-1}N^{\frac{1}{N-1}}\left[\mathcal{L}^{N}(B_{1}^{\widehat{H}_0}(0)\cap\mathcal{C})\right]^{\frac{N}{N-1}}R^N(t)
    \end{aligned}
\end{equation}
for a.e. $t<t_0$.

\smallskip

By letting $\Omega=\Omega_t$, $f=e^u$ and $y=x(t)$ in the Pohozaev identity \eqref{Poho} and  using the same approximation method in the proof of Proposition \ref{Prop:5.3}, we can derive that
 \begin{equation}\label{eq:6.3}
    \begin{aligned}
        &\quad N\int_{\mathcal{C}\cap B_{R(t)}^{\widehat{H}_0}(x(t))}e^u \mathrm{d}x
        -\int_{\mathcal{C}\cap \partial B_{R(t)}^{\widehat{H}_0}(x(t))}e^u\left\langle x-x(t) ,\nu\right\rangle \mathrm{d}\mathcal{H}^{N-1}\\
        & =\int_{\mathcal{C}\cap \partial B_{R(t)}^{\widehat{H}_0}(x(t))} \bigg[H^{N-1}(\nabla u)
        \left\langle \nabla H(\nabla u),\nu\right\rangle
        \left\langle x-x(t) ,\nabla u\right\rangle \\
        &\qquad\qquad\qquad\qquad -\frac{H^{N}(\nabla u)}{N}\left\langle x-x(t) ,\nu\right\rangle \bigg]\mathrm{d}\mathcal{H}^{N-1}.
    \end{aligned}
\end{equation}
Noting that $\nu=-\frac{\nabla u}{\left\lvert \nabla u\right\rvert }=\frac{\nabla \widehat{H}_0(x-x(t))}{\left\lvert \nabla \widehat{H}_0(x-x(t))\right\rvert}$
on $\partial\mathcal{C}_t$ and $x-x(t)=R(t)\nu$ on $\partial\mathcal{C}_t$, we deduce that
$\left\langle x-x(t),\nu\right\rangle=R(t)$, and
\begin{equation}\label{eq:6.4}
    \begin{aligned}
        \left\langle \nabla H(\nabla u),\nu\right\rangle
        \left\langle x-x(t) ,\nabla u\right\rangle&=\left\langle \nabla H(\nabla u),-\frac{\nabla u}{\left\lvert \nabla u\right\rvert }\right\rangle
        \left\langle -\frac{R(t)\nabla u}{\left\lvert \nabla u\right\rvert } ,\nabla u\right\rangle \\
        &=H(\nabla u)R(t).
    \end{aligned}
\end{equation}
Then from \eqref{eq:6.3}, we infer that
\begin{equation}\label{eq:6.5}
    \begin{aligned}
        N\int_{\Omega_t}e^u \mathrm{d}x
        -e^tR(t)\int_{\mathcal{C}\cap \partial B_{R(t)}^{\widehat{H}_0}(x(t))}\mathrm{d}\mathcal{H}^{N-1}=\frac{N-1}{N}H^N(\nabla u)R(t)\int_{\mathcal{C}\cap \partial B_{R(t)}^{\widehat{H}_0}(x(t))}\mathrm{d}\mathcal{H}^{N-1}.
    \end{aligned}
\end{equation}
From \eqref{eq:6.5}, we get
\begin{equation}\label{eq:6.6}
    M(t)=e^tR^{N}(t)\mathcal{L}^{N}(B_1^{\widehat{H}_0}(0)\cap\mathcal{C})+\frac{N-1}{N}H^{N}(\nabla u)R^N(t)\mathcal{L}^{N}(B_1^{\widehat{H}_0}(0)\cap\mathcal{C}).
\end{equation}
Then \eqref{eq:6.1} and \eqref{eq:6.6} yield that
\begin{equation}\label{eq:6.7}
    e^t R^N(t)=\frac{M(t)}{\mathcal{L}^{N}(B_1^{\widehat{H}_0}(0)\cap\mathcal{C})}-\frac{N-1}{N}\left[\frac{M(t)}{N\mathcal{L}^{N}(B_1^{\widehat{H}_0}(0)\cap\mathcal{C})}\right] ^{\frac{N}{N-1}}.
\end{equation}
By \eqref{eq:6.2}, we have
\begin{equation}\label{aeq:6.8}
    M^{\prime}(t)=-N^{\frac{N}{N-1}}e^tR^N(t)\left[\mathcal{L}^{N}(B_1^{\widehat{H}_0}(0)\cap\mathcal{C})\right]^{\frac{N}{N-1}}M^{-\frac{1}{N-1}}(t).
\end{equation}
Inserting \eqref{eq:6.7} into \eqref{aeq:6.8}, we get
\begin{equation*}
    \begin{aligned}
        M^{\prime}(t)=&-N^{\frac{N}{N-1}}\left(\frac{M(t)}{\mathcal{L}^{N}(B_1^{\widehat{H}_0}(0)\cap\mathcal{C})}-\frac{N-1}{N}\left[\frac{M(t)}{N\mathcal{L}^{N}(B_1^{\widehat{H}_0}(0)\cap\mathcal{C})}\right] ^{\frac{N}{N-1}}\right)\\
        &\cdot\left[\mathcal{L}^{N}(B_1^{\widehat{H}_0}(0)\cap\mathcal{C})\right]^{\frac{N}{N-1}}M^{-\frac{1}{N-1}}(t).
    \end{aligned}
\end{equation*}
Thus by  direct calculations, we can obtain that
\begin{equation}\label{eq:6.8}
    M^{\prime}(t)=\frac{N-1}{N}M(t)-N^{\frac{N}{N-1}}\left[\mathcal{L}^{N}(B_1^{\widehat{H}_0}(0)\cap\mathcal{C})\right]^{\frac{1}{N-1}}M^{\frac{N-2}{N-1}}(t).
\end{equation}
Let $B_N:=\frac{N}{N-1}N^{\frac{N}{N-1}}\left[\mathcal{L}^{N}(B_1^{\widehat{H}_0}(0)\cap\mathcal{C})\right]^{\frac{1}{N-1}}$. Due to
\begin{equation*}
    \int\frac{\mathrm{d}M}{\frac{N-1}{N}M-\frac{N-1}{N}B_N M^{\frac{N-2}{N-1}}}
    =N\text{ln}\lvert M^{\frac{1}{N-1}}-B_N \rvert
\end{equation*}
and $M(t_0)=0$, we can integrate \eqref{eq:6.8} from $t$ to $t_0$ and get
$$\lvert M^{\frac{1}{N-1}}-B_N \rvert=B_Ne^{\frac{t-t_0}{N}}.$$
Moreover, noting that $M(t)$ is decreasing with respect to $t$, we deduce that
\begin{equation}\label{eq:6.9}
    M(t)=\left[B_N\left(1-e^{\frac{t-t_0}{N}}\right) \right]^{N-1}.
\end{equation}
Inserting \eqref{eq:6.9} into \eqref{eq:6.7}, we get
\begin{equation}\label{eq:6.10}
    R^N(t)=N\left(\frac{N^2}{N-1}\right)^{N-1}\left(1-e^{\frac{t-t_0}{N}}\right)^{N-1} e^{-\frac{(N-1)t+t_0}{N}},
\end{equation}
for a.e. $t\leq t_0$. Since \eqref{eq:6.10} implies that $R(t)$ is monotone and continuous, thus \eqref{eq:6.10} can be rewritten as
\begin{equation}\label{eq:6.11}
    e^t=\frac{c_N\lambda^N}{\left[1+\lambda^{\frac{N}{N-1}}R^{\frac{N}{N-1}}(t)\right]^N},
\end{equation}
where
\begin{equation*}
    \lambda:=\left[\frac{e^{t_0}}{N\left(\frac{N^2}{N-1}\right)^{N-1} }\right]^{\frac{1}{N}},\quad t=u(x),\quad \widehat{H}_0(x-x(t))=R(t),\quad x\in\partial\mathcal{C}_{t}.
\end{equation*}

\smallskip

Now we only need to show that there exists $x_0\in\mathbb{R}^k\times\{0_{\mathbb{R}^{N-k}}\}$ such that $x(t)\equiv x_0$ for $t<t_0$. First
notice that if $s<t$, then $B_{R(t)}^{\widehat{H}_0}(x(t))\cap\mathcal{C}=\Omega_t\subset\Omega_s=B_{R(s)}^{\widehat{H}_0}(x(s))\cap\mathcal{C}$.  Thus
\begin{equation*}
    \widehat{H}_0(x(t)-x(s))\leq C\left\lvert R(t)-R(s)\right\rvert
\end{equation*}
for any $s,t<t_0$ and some constant $C>0$ depending only on $\widehat{H}_0$. Therefore we know that $x(t)$ is locally Lipschitz
 continuous on $(-\infty, t_0)$. Assume that there exists $t_1<t_0$ such that $x_1:=x^{\prime}(t_1)\neq0$. For $t<t_0$ and
 $\omega\in\partial B_{1}^{\widehat{H}_0}(0)\cap\mathcal{C}$, we let
 \begin{equation*}
    S(t):=x(t)+R(t)\omega .
 \end{equation*}
 Then we have $\widehat{H}_0(S(t)-x(t))=R(t)$, thus $S(t)\in\partial B_{R(t)}^{\widehat{H}_0}(x(t))\cap\mathcal{C}=\{x\in\mathcal{C}:u(x)=t\}$. It follows that $u(S(t))=t$ for any $t<t_0$ and $\omega\in\partial B_{1}^{\widehat{H}_0}(0)\cap\mathcal{C}$. By direct computations, we deduce that
  \begin{align}\label{eq:6.12}
    \nu(S(t))=-\frac{\nabla u(S(t))}{\left\lvert \nabla u(S(t))\right\rvert }=
    \frac{\nabla \widehat{H}_0(S(t)-x(t))}{\lvert \nabla \widehat{H}_0(S(t)-x(t))\rvert }
    =\frac{\nabla \widehat{H}_0(\omega)}{\lvert \nabla \widehat{H}_0(\omega)\rvert }.
  \end{align}
  Thus one has
  \begin{align}\label{eq:6.13}
    H(-\nu(S(t)))=\frac{H(\nabla u(S(t)))}{\left\lvert \nabla u(S(t))\right\rvert}
    =\frac{\widehat{H}(\nabla \widehat{H}_0(\omega))}{\lvert \nabla \widehat{H}_0(\omega)\rvert }
    =\frac{1}{\lvert \nabla \widehat{H}_0(\omega)\rvert}.
  \end{align}
  Noting that $H(\nabla u(S(t)))\equiv C>0$ for any $t<t_0$ and $\omega\in\partial B_{1}^{\widehat{H}_0}(0)\cap\mathcal{C}$, by \eqref{eq:6.13}, we have
  \begin{equation}\label{eq:6.14}
    \left\lvert \nabla u(S(t))\right\rvert
    =H(\nabla u(S(t)))\lvert \nabla \widehat{H}_0(\omega)\rvert
    =C\lvert \nabla \widehat{H}_0(\omega)\rvert
  \end{equation}
  for any $t<t_0$ and $\omega\in\partial B_{1}^{\widehat{H}_0}(0)\cap\mathcal{C}$. By \eqref{eq:6.12} and \eqref{eq:6.14}, we know
  that
  \begin{equation}\label{eq:6.15}
    \nabla u(S(t))=-C\nabla \widehat{H}_0(\omega)
  \end{equation}
  holds for $t<t_0$ and $\omega\in\partial B_{1}^{\widehat{H}_0}(0)\cap\mathcal{C}$. Moreover, \eqref{eq:6.15} yields
  \begin{equation}\label{eq:6.16}
    \left\langle \nabla u(S(t)),\omega\right\rangle
    =-C\langle \nabla \widehat{H}_0(\omega),\omega\rangle
    =-C\widehat{H}_0(\omega)
    =-C.
  \end{equation}

\medskip

Let
\begin{equation*}
    X(t):=x(t)+R(t)\omega^+\ \quad \text{and}\ \quad Y(t):=x(t)+R(t)\omega^-,
\end{equation*}
where $\omega ^{+}:=\frac{x_1}{\widehat{H}_0(x_1)}$ and $\omega ^{-}:=\frac{-x_1}{H_0(x_1)}$. Noting that $\widehat{H}_0(X(t)-x(t))=R(t)$ and $\widehat{H}_0(Y(t)-x(t))=R(t)$, thus we have
$X(t), Y(t)\in\partial B_{R(t)}^{\widehat{H}_0}(x(t))$. Moreover, one has $X(t), Y(t)\in\mathbb{R}^k\times\{0_{\mathbb{R}^{N-k}}\}$.
Since $H_0\in C^2(\mathbb{R}^N\setminus \{0\})$, letting $t=t_1$ in \eqref{eq:6.15} and then letting $\omega\rightarrow\omega^+,\omega^-$ respectively, we deduce that
\begin{equation}\label{eq:6.17}
    \lim\limits_{\omega\rightarrow\omega^+}\nabla u(S(t_1))=-C\nabla \widehat{H}_0(\omega^+)
\end{equation}
and
\begin{equation}\label{eq:6.18}
    \lim\limits_{\omega\rightarrow\omega^-}\nabla u(S(t_1))=-C\nabla \widehat{H}_0(\omega^-).
\end{equation}
Noting that $\lim\limits_{\omega\rightarrow\omega^+}S(t_1)=X(t_1)$ and $\lim\limits_{\omega\rightarrow\omega^-}S(t_1)=Y(t_1)$, we can define
\begin{equation}\label{eq:6.19}
    \nabla u(X(t_1)):=  \lim\limits_{\omega\rightarrow\omega^+}\nabla u(S(t_1))=-C\nabla \widehat{H}_0(\omega^+)
\end{equation}
and
\begin{equation}\label{eq:6.20}
    \nabla u(Y(t_1)):=  \lim\limits_{\omega\rightarrow\omega^-}\nabla u(S(t_1))=-C\nabla \widehat{H}_0(\omega^-).
\end{equation}
From \eqref{eq:6.19} and \eqref{eq:6.20}, we infer that
\begin{equation}\label{eq:6.21}
    \langle \nabla u(X(t_1)),\omega^+\rangle
    =-C\langle \nabla \widehat{H}_0(\omega^+),\omega^+\rangle
    =-C\widehat{H}_0(\omega^+)
    =-C
\end{equation}
and
\begin{equation}\label{eq:6.22}
    \langle \nabla u(Y(t_1)),\omega^-\rangle
    =-C\langle \nabla \widehat{H}_0(\omega^-),\omega^-\rangle
    =-C\widehat{H}_0(\omega^-)
    =-C.
\end{equation}
Through direct calculations, the definitions of $S(t)$, $\omega^+$ and $\omega^-$ yield that
\begin{equation}\label{eq:6.23}
    \begin{aligned}
        1=\frac{d}{dt}u(S(t))\bigg |_{t=t_1}
        &=\left\langle \nabla u(S(t_1)),S^{\prime}(t_1)\right\rangle \\
        &=\left\langle \nabla u(S(t_1)),x_1+R^{\prime}(t_1)\omega\right\rangle\\
        &= \widehat{H}_0(x_1)\left\langle \nabla u(S(t_1)),\omega^+\right\rangle+R^{\prime}(t_1)\left\langle \nabla u(S(t_1)),\omega\right\rangle
    \end{aligned}
\end{equation}
and
\begin{equation}\label{eq:6.24}
    \begin{aligned}
        1=\frac{d}{dt}u(S(t))\bigg |_{t=t_1}
        &=\left\langle \nabla u(S(t_1)),S^{\prime}(t_1)\right\rangle \\
        &=\left\langle \nabla u(S(t_1)),x_1+R^{\prime}(t_1)\omega\right\rangle\\
        &= H_0(x_1)\left\langle \nabla u(S(t_1)),-\omega^-\right\rangle+R^{\prime}(t_1)\left\langle \nabla u(S(t_1)),\omega\right\rangle.
    \end{aligned}
\end{equation}
Letting $\omega\rightarrow\omega^+$ in \eqref{eq:6.23} and $\omega\rightarrow\omega^-$ in \eqref{eq:6.24}
respectively,  by \eqref{eq:6.19}--\eqref{eq:6.22} we deduce that
\begin{equation}\label{eq:6.25}
    1=-C(\widehat{H}_0(x_1)+R^{\prime}(t_1))
\end{equation}
and
\begin{equation}\label{eq:6.26}
    1=-C\left(-H_0(x_1)+R^{\prime}(t_1)\right).
\end{equation}
Combining \eqref{eq:6.25} and \eqref{eq:6.26}, we conclude that
\begin{equation*}
    \widehat{H}_0(x_1)+H_0(x_1)=0,
\end{equation*}
which contradicts $x_1\neq0$. Thus we deduce that, $x'(t)\equiv0$ for $t<t_0$, and hence there exists $x_0\in\mathbb{R}^k\times\{0_{\mathbb{R}^{N-k}}\}$ such that
 $x(t)\equiv x_0$ for $t<t_0$. Finally, by \eqref{eq:6.11}, we arrive at
 \begin{equation*}
    e^{u(x)}=\frac{c_N\lambda^N}{\left[1+\lambda^{\frac{N}{N-1}}\widehat{H}_0^{\frac{N}{N-1}}(x-x_0)\right]^N},
 \end{equation*}
and hence \eqref{eq:1.2} follows immediately. This concludes our proof of Theorem \ref{Th:1.1}.  \qquad\qquad\qquad\quad$\qed$

\appendix
\section{The radial Poincar\'{e} type inequality and its applications}\label{appendix}
In this section, we will prove the radial Poincar\'{e} type inequality. Then, as applications, we will prove the Brezis-Merle type exponential inequality for mixed boundary value problems of Finsler $N$-Laplacian in Lemmas \ref{le:1+} and \ref{le:1+a}.

\medskip

First, we are to prove the radial Poincar\'{e} type inequality. Let $\Omega\subset\mathbb{R}^{N}$ be an open set. For any point $P\in\mathbb{R}^{N}$, we define the largest cross-section
$$\Sigma^{P}_{\Omega}:=\left(\bigcup\limits_{0<\rho<+\infty}\rho\left(\Omega-\{P\}\right)\right)\cap\mathbb{S}^{N-1}=\left\{\frac{x-P}{|x-P|}\,\Bigg|\,x\in\Omega,\,x\neq P\right\}$$
and the open cone $\mathcal{C}_{\Sigma^{P}_{\Omega},P}:=\{P\}+\bigcup\limits_{0<r<+\infty}r\Sigma^{P}_{\Omega}$ with vertex $P$ and cross-section $\Sigma^{P}_{\Omega}$, where $\mathbb{S}^{N-1}:=\partial B_{1}(0)$. One can easily see that $\Omega\subset\mathcal{C}_{\Sigma^{P}_{\Omega},P}$. Assume that there is a radial center $P\in\mathbb{R}^{N}$ (usually not unique) such that $\Omega$ satisfies the following property $\emph{(P)}$:

\medskip

\noindent $\emph{(P)}$ \, for any unit vector $\omega\in\Sigma^{P}_{\Omega}$, there exist an integer $K_{\omega}\geq1$ and a sequence of radii $0\leq \underline{r}^{1}_{\omega}\leq \overline{r}^{1}_{\omega}<\underline{r}^{2}_{\omega}\leq \overline{r}^{2}_{\omega}<\cdots<\underline{r}^{K_{\omega}}_{\omega}\leq \overline{r}^{K_{\omega}}_{\omega}$ such that
$$
\left(\bigcup\limits_{0\leq r<+\infty}\{r\omega+P\}\right)\cap\partial\Omega=\bigcup\limits_{1\leq k\leq K_{\omega}}\{\overline{r}^{k}_{\omega}\omega,\underline{r}^{k}_{\omega}\omega\}.
$$

\begin{defn}
Define $\Gamma_{P}:=\partial\Omega\cap\mathcal{C}_{\Sigma^{P}_{\Omega},P}$, then $\partial\Omega\setminus\Gamma_{P}=\partial\mathcal{C}_{\Sigma^{P}_{\Omega},P}\cap\partial\Omega$. We denote the (front) radial contact set in X-rays from the radial center $P$ by $$\Gamma^{-}_{P}:=\{\underline{r}^{k}_{\omega}\omega\mid\,1\leq k\leq K_{\omega},\,\omega\in\Sigma^{P}_{\Omega}\}\subset\Gamma_{P}\subset\partial\Omega,$$
and the (back) radial contact set in X-rays from the radial center $P$ by
$$\Gamma^{+}_{P}:=\Gamma_{P}\setminus\Gamma^{-}_{P}=\{\overline{r}^{k}_{\omega}\omega \mid\,1\leq k\leq K_{\omega}, \, \omega\in\Sigma^{P}_{\Omega}\}.$$
The notation
\begin{equation}\label{rw}
  d_{r,P}(\Omega):=\sup\limits_{\omega\in\Sigma^{P}_{\Omega}}\max\limits_{1\leq k\leq K_{\omega}}\left(\overline{r}^{k}_{\omega}-\underline{r}^{k}_{\omega}\right)
\end{equation}
denotes the radial width of $\Omega$ with respect to the radial center $P$.
\end{defn}

\begin{figure}[H]\label{F}
		\centering	
					\begin{tikzpicture}
					\node[left] at (0:0){$P$};
                    \filldraw[fill=gray!10, draw=gray!10] (16:4)--(16:8) arc (16:-16:8)--(-16:4) [shift=(0:7.7), rotate=180] (16:4) arc (16:-16:4);
                    \draw[bline, line width=1.5] (16:8) arc (16:-16:8);
                    \draw[shift=(0:7.7), rotate=180, rline, line width=1.5] (16:4) arc (16:-16:4);
					\draw (0:0) -- (16:9) (0:0) -- (-16:9);
					\node[below]at(0:10.5) {$\mathcal C_{\Sigma^{P}_{\Omega},P}$};
                    \draw[dashed] (0:0)--(0:3.7) (0:8)--(0:12);
					\draw[dashed] (16:9)--(16:12) (-16:9)--(-16:12);
                    \draw[->] (0:5.2)--(0:3.72);
                    \draw[->] (0:6.8)--(0:7.98); \node[] at (0:6) {$d_{r,P}(\Omega)$};
                    \draw[->] (8:10)--(2:8.02); \node[above] at (8:11) {The (back) radial contact set $\Gamma^{+}_{P}$};
                    \draw[->] (50:3)--(2:3.67); \node[above] at (50:3) {The (front) radial contact set $\Gamma^{-}_{P}$};
                    \draw[->] (-23:5.6)--(-8:7.98);
                    \draw[->] (-23:5.6)--(-8:3.82); \node[below] at (-20:6.5) {$\Gamma_{P}=\Gamma^{+}_{P}\cup\Gamma^{-}_{P}$};
					\draw[line width=1.5] (16:2) arc (16:-16:2);
                    \draw[->] (30:1)--(2:1.97) ;\node[above] at (30:1) {$\Sigma^{P}_{\Omega}+\{P\}$};
                    \node[above] at (3:6){$\Omega$};
				\end{tikzpicture}
		\centering
		\caption{The (front) radial contact set $\Gamma^{-}_{P}$ and (back) radial contact set $\Gamma^{+}_{P}$ of $\Omega$ in X-rays from the radial center $P$}
	\end{figure}
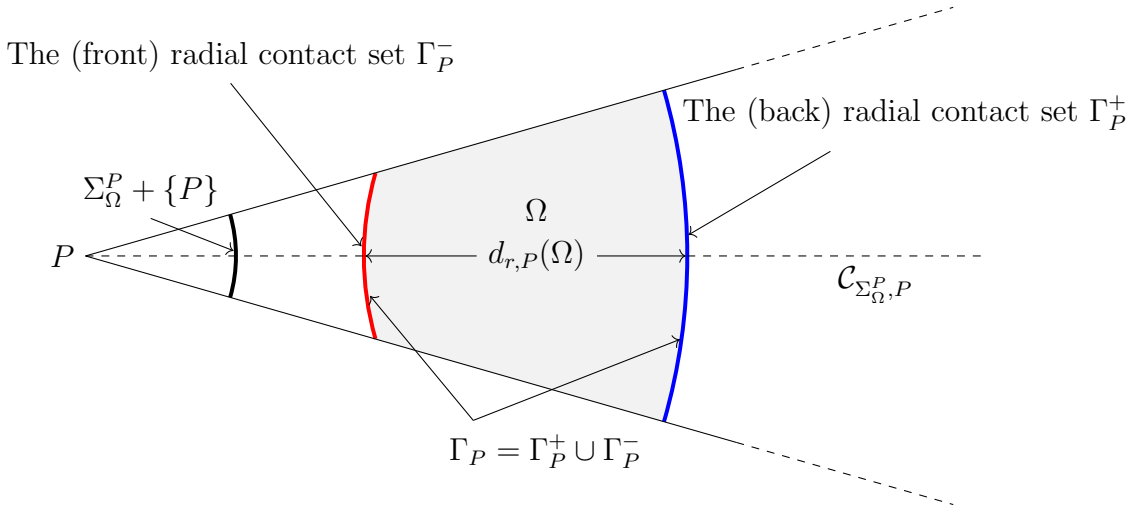

\begin{rem}\label{eg}
For instance, if $\Omega=\left(B_{R}(0)\setminus B_{r}(0)\right)\cap\mathcal{C}$ is a fan-shaped domain for some open cone $\mathcal{C}\subset\mathbb{R}^{N}$ with vertex at $0$ and $0\leq r<R<+\infty$, then $\Omega$ satisfies the property $(P)$ for radial center $P=0$, and one has $\Gamma^{+}_{0}=\partial B_{R}(0)\cap\mathcal{C}$, $\Gamma^{-}_{0}=\partial B_{r}(0)\cap\mathcal{C}$ and $d_{r,0}(\Omega)=R-r$. If $\Omega=\left(\left(B_{8}(0)\setminus B_{4}(0)\right)\cup\left(B_{2}(0)\setminus B_{1}(0)\right)\right)\cap\mathcal{C}$ for some open cone $\mathcal{C}\subset\mathbb{R}^{N}$ with vertex at $0$, then $\Omega$ satisfies the property $(P)$ for radial center $P=0$, and one has $\Gamma^{+}_{0}=\left(\partial B_{8}(0)\cup\partial B_{2}(0)\right)\cap\mathcal{C}$, $\Gamma^{-}_{0}=\left(\partial B_{4}(0)\cup\partial B_{1}(0)\right)\cap\mathcal{C}$ and $d_{r,0}(\Omega)=4$.
\end{rem}

We have the following radial Poincar\'{e} type inequality, which is of its own importance and interests.
\begin{lem}[Radial Poincar\'{e} type inequality]\label{A1}
Let $N\geq2$ and $\Omega\subset\mathbb{R}^{N}$ be an open set. Assume that there exists a radial center $P\in\mathbb{R}^{N}$ (usually not unique) such that, $\Omega$ satisfies property $\emph{(P)}$. Then, for any $1\leq p<+\infty$, we have
\begin{equation}\label{rp}
  \|f\|_{L^{p}(\Omega)}\leq d_{r,P}(\Omega)\|\nabla f\|_{L^{p}(\Omega)}
\end{equation}
for any $f\in W^{1,p}(\Omega)$ with $f=0$ on the (back) radial contact set $\Gamma^{+}_{P}$.
\end{lem}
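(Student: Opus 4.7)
\textbf{Proof proposal for Lemma \ref{A1}.}

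The plan is to work in spherical coordinates centered at the radial center $P$, reduce matters to a one-dimensional Poincar\'{e} inequality on each radial segment, and then integrate out. By a standard density argument it suffices to prove \eqref{rp} for $f\in C^{\infty}(\overline{\Omega})\cap W^{1,p}(\Omega)$ vanishing on $\Gamma_{P}^{+}$; the passage to general $f\in W^{1,p}(\Omega)$ with the trace $f|_{\Gamma_{P}^{+}}=0$ follows by approximation, using the regularity of $\Gamma_{P}^{+}$ provided by property $\emph{(P)}$.

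First, for each $\omega\in\Sigma^{P}_{\Omega}$, property $\emph{(P)}$ says that the ray $\{P+r\omega\mid r>0\}$ meets $\Omega$ in the disjoint union of open intervals $I_{\omega}^{k}:=(\underline{r}^{k}_{\omega},\overline{r}^{k}_{\omega})$ for $1\leq k\leq K_{\omega}$, and that for each such $k$ the outer endpoint $P+\overline{r}^{k}_{\omega}\omega$ lies in the (back) contact set $\Gamma_{P}^{+}$ where $f$ vanishes. Hence, for any $r\in I_{\omega}^{k}$, the fundamental theorem of calculus gives
\begin{equation*}
f(P+r\omega)=-\int_{r}^{\overline{r}^{k}_{\omega}}\partial_{s}\bigl[f(P+s\omega)\bigr]\,\mathrm{d}s,
\end{equation*}
and H\"{o}lder's inequality, together with the bound $\overline{r}^{k}_{\omega}-r\leq d_{k}:=\overline{r}^{k}_{\omega}-\underline{r}^{k}_{\omega}\leq d_{r,P}(\Omega)$, yields the pointwise estimate
\begin{equation*}
|f(P+r\omega)|^{p}\leq d_{k}^{\,p-1}\int_{r}^{\overline{r}^{k}_{\omega}}\bigl|\partial_{s}f(P+s\omega)\bigr|^{p}\,\mathrm{d}s.
\end{equation*}

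Next I would multiply by the polar weight $r^{N-1}$ and integrate in $r$ on $I_{\omega}^{k}$. Swapping the order of integration via Fubini and using the monotonicity $r^{N-1}\leq s^{N-1}$ for $r\leq s$ (which is where the potentially troublesome radial Jacobian is absorbed), I obtain
\begin{align*}
\int_{\underline{r}^{k}_{\omega}}^{\overline{r}^{k}_{\omega}}|f(P+r\omega)|^{p}r^{N-1}\,\mathrm{d}r
&\leq d_{k}^{\,p-1}\int_{\underline{r}^{k}_{\omega}}^{\overline{r}^{k}_{\omega}}\bigl|\partial_{s}f(P+s\omega)\bigr|^{p}(s-\underline{r}^{k}_{\omega})\,s^{N-1}\,\mathrm{d}s \\
&\leq d_{r,P}(\Omega)^{p}\int_{\underline{r}^{k}_{\omega}}^{\overline{r}^{k}_{\omega}}\bigl|\partial_{s}f(P+s\omega)\bigr|^{p}s^{N-1}\,\mathrm{d}s.
\end{align*}
Summing over $1\leq k\leq K_{\omega}$ and integrating over $\omega\in\Sigma^{P}_{\Omega}$ with respect to the surface measure on $\mathbb{S}^{N-1}$, the left-hand side becomes $\int_{\Omega}|f|^{p}\,\mathrm{d}x$ by the polar coordinate formula, while the right-hand side is bounded by $d_{r,P}(\Omega)^{p}\int_{\Omega}|\nabla f|^{p}\,\mathrm{d}x$ since $|\partial_{s}f(P+s\omega)|\leq|\nabla f(P+s\omega)|$. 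This yields \eqref{rp}.

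The only delicate points will be: (i) checking that the radial slice of $\Omega$ is essentially one-dimensional Sobolev for almost every direction $\omega$, which follows from the standard ACL (absolutely continuous on lines) characterization of $W^{1,p}$ combined with property $\emph{(P)}$; and (ii) interpreting the boundary condition $f=0$ on $\Gamma_{P}^{+}$ as a trace condition compatible with the one-dimensional boundary value $f(P+\overline{r}^{k}_{\omega}\omega)=0$ for almost every $\omega$, which again is the content of the ACL representative together with Fubini applied in polar coordinates. I expect (ii) to be the main technical obstacle, handled by first proving the inequality for $f\in C^{1}(\overline{\Omega})$ with $f|_{\Gamma_{P}^{+}}=0$ and then approximating a general $W^{1,p}$ function whose trace vanishes on $\Gamma_{P}^{+}$.
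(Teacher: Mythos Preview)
Your proposal is correct and follows essentially the same approach as the paper: work in polar coordinates around $P$, use the fundamental theorem of calculus on each radial interval $I_\omega^k$ with the endpoint condition at $\overline{r}^k_\omega$, apply H\"older, swap the order of integration via Fubini, and absorb the Jacobian with the monotonicity $r^{N-1}\leq s^{N-1}$. The paper's computation is exactly this chain of inequalities; your treatment is in fact slightly more careful in flagging the density/ACL justification for passing from smooth $f$ to general $f\in W^{1,p}(\Omega)$ with vanishing trace on $\Gamma_P^{+}$, a point the paper leaves implicit.
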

\begin{proof}
If $d_{r,P}(\Omega)=+\infty$, then inequality \eqref{rp} is trivial. We assume $d_{r,P}(\Omega)<+\infty$ from now on. Without loss of generality, we may assume that the radial center $P=0$. Let $\det\, J=\frac{\partial(x_{1},x_{2},\cdots,x_{N})}{\partial(r,\omega_{1},\cdots,\omega_{N-1})}$ denote the Jacobian determinant for the transformation of spherical coordinates, then $\det\, J(r,\omega)=r^{n-1}\det\, J(1,\omega)$. Note that, for any $x\in\Omega$ ($x\neq0$), one has $\omega:=\frac{x}{|x|}\in\Sigma^{0}_{\Omega}$. Thus there exists a unique $1\leq k\leq K_{\omega}$ depending on $x$, such that $r=|x|\in(\underline{r}^{k}_{\omega},\overline{r}^{k}_{\omega})$. Note that $f=0$ on $\Gamma^{+}_{0}$, by Fubini's theorem and H\"{o}lder's inequality, we have
\begin{eqnarray*}
  &&\int_{\Omega}|f(x)|^{p}\mathrm{d}x=\int_{\Sigma^{P}_{\Omega}}\mathrm{d}\omega\sum_{k=1}^{K_{\omega}}\int_{\underline{r}^{k}_{\omega}}^{\overline{r}^{k}_{\omega}}|\det\, J(r,\omega)|\mathrm{d}r\bigg|\int_{r}^{\overline{r}^{k}_{\omega}}\frac{\partial f}{\partial r}(s,\omega)\mathrm{d}s\bigg|^{p} \\
  &&\qquad\qquad\quad \leq\int_{\Sigma^{P}_{\Omega}}\mathrm{d}\omega\sum_{k=1}^{K_{\omega}}\int_{\underline{r}^{k}_{\omega}}^{\overline{r}^{k}_{\omega}}\left(\overline{r}^{k}_{\omega}-r\right)^{p-1}|\det\, J(r,\omega)|\mathrm{d}r\int_{r}^{\overline{r}^{k}_{\omega}}\Big|\frac{\partial f}{\partial r}(s,\omega)\Big|^{p}\mathrm{d}s \\
   && \qquad\qquad\quad \leq [d_{r,0}(\Omega)]^{p-1}\int_{\Sigma^{P}_{\Omega}}\mathrm{d}\omega\sum_{k=1}^{K_{\omega}}\int_{\underline{r}^{k}_{\omega}}^{\overline{r}^{k}_{\omega}}|\nabla f(s,\omega)|^{p}\mathrm{d}s\int^{s}_{\underline{r}^{k}_{\omega}}r^{n-1}|\det\, J(1,\omega)|\mathrm{d}r \\
   && \qquad\qquad\quad \leq [d_{r,0}(\Omega)]^{p-1}\int_{\Sigma^{P}_{\Omega}}\mathrm{d}\omega\sum_{k=1}^{K_{\omega}}\int_{\underline{r}^{k}_{\omega}}^{\overline{r}^{k}_{\omega}}
  \left(s-\underline{r}^{k}_{\omega}\right)|\nabla f(s,\omega)|^{p}\cdot s^{n-1}|\det\, J(1,\omega)|\mathrm{d}s \\
  && \qquad\qquad\quad \leq [d_{r,0}(\Omega)]^{p-1}\int_{\Sigma^{P}_{\Omega}}\mathrm{d}\omega\sum_{k=1}^{K_{\omega}}\left(\overline{r}^{k}_{\omega}-\underline{r}^{k}_{\omega}\right)\int_{\underline{r}^{k}_{\omega}}^{\overline{r}^{k}_{\omega}}
  |\nabla f(s,\omega)|^{p}\cdot|\det\, J(s,\omega)|\mathrm{d}s \\
  &&\qquad\qquad\quad \leq [d_{r,0}(\Omega)]^{p}\int_{\Omega}|\nabla f|^{p}\mathrm{d}x.
\end{eqnarray*}
This finishes our proof of the radial Poincar\'{e} type inequality in Lemma \ref{A1}.
\end{proof}
\begin{rem}
Since $\Gamma^{+}_{P}\subset\Gamma_{P}\subset\partial\Omega$, so $f=0$ on $\partial\Omega$ or $\Gamma_{P}$ implies that $f=0$ on $\Gamma^{+}_{P}$, thus the usually well-known Poincar\'{e} inequality for $f\in W^{1,p}_{0}(\Omega)$ is a direct corollary of the radial Poincar\'{e} type inequality in Lemma \ref{A1}.
\end{rem}

As application of the radial Poincar\'{e} type inequality, we can prove the Brezis-Merle type exponential inequality for mixed boundary value problems of Finsler $N$-Laplacian, which has its own interests and importance and is a variant and generalization of Lemma \ref{le:1}. One should note that, in the following Lemma \ref{le:1+a}, we only require $u=h$ on the boundary subset $\Gamma$ containing the (back) radial contact set $\Gamma^{+}_{P}$ (i.e., $\Gamma^{+}_{P}\subseteq\Gamma\subset\partial\Omega$), while Lemma \ref{le:1} requires $u=h$ on the whole boundary $\partial\Omega$.
\begin{lem}[Brezis-Merle type exponential inequality for mixed boundary value problems: a variant of Lemma \ref{le:1}]\label{le:1+a}
    Let $N\geq2$ and $\Omega\subset\mathbb{R}^{N}$ be a bounded domain. Assume that there exists a radial center $P\in\mathbb{R}^{N}$ (usually not unique) such that $\Omega$ satisfies property $\emph{(P)}$. Let the gauge $H$ be the same as in Lemma \ref{8,le:2.3}. Assume that $u,h\in W^{1,N}(\Omega)$ are weak solutions of
    \begin{equation*}
     \left\{
         \begin{aligned}
             &-\Delta ^{H}_{N}u=f, \quad \,\, -\Delta ^{H}_{N}h=0 \ &\rm{in}\ \Omega, \\
             &u=h \ &\rm{on}\ \Gamma,\\
             &\langle a(\nabla u),\nu\rangle=\langle a(\nabla h),\nu\rangle=0\ &\rm{on}\ \partial\Omega\setminus\Gamma,
             \end{aligned}
             \right.
    \end{equation*}
    where $f\in L^1(\Omega)$, the boundary subset $\Gamma$ satisfies $\Gamma^{+}_{P}\subseteq\Gamma\subset\partial\Omega$ and $\Gamma^{+}_{P}$ is the (back) radial contact set. Then there exists a positive constant $\mu_\Omega$ (given by \eqref{ce}), depending only on $N$, $H$, $\Omega$ and $P$, such that for every $0<\lambda<\mu_\Omega \|f\|_{L^1(\Omega)}^{-\frac{1}{N-1}}$, it holds
    \begin{equation*}
        \int_{\Omega}e^{\lambda|u-h|}\mathrm{d}x\leq \frac{\mathcal{L}^N(\Omega)}{1-\lambda\mu_{\Omega}^{-1}\|f\|_{L^1(\Omega)}^{\frac{1}{N-1}}}.
    \end{equation*}
 \end{lem}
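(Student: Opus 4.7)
The plan is to adapt the proof of the Dirichlet version (Lemma \ref{le:1}, that is \cite[Lemma 2.2]{CL2}) to the mixed boundary setting, with the radial Poincar\'{e} type inequality (Lemma \ref{A1}) playing the role that the usual Sobolev--Poincar\'{e} embedding plays in the Dirichlet proof. Set $v:=u-h\in W^{1,N}(\Omega)$. Subtracting the two equations, $v$ weakly satisfies $-\operatorname{div}\bigl(a(\nabla u)-a(\nabla h)\bigr)=f$ in $\Omega$, together with $v|_{\Gamma}=0$ and $\langle a(\nabla u)-a(\nabla h),\nu\rangle=0$ on $\partial\Omega\setminus\Gamma$. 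Consequently, for every test function $\phi\in W^{1,N}(\Omega)\cap L^{\infty}(\Omega)$ with $\phi|_{\Gamma}=0$, integration by parts gives
\begin{equation*}
\int_{\Omega}\langle a(\nabla u)-a(\nabla h),\nabla\phi\rangle\,\mathrm{d}x=\int_{\Omega} f\phi\,\mathrm{d}x,
\end{equation*}
since the boundary integrals vanish on $\Gamma$ (from $\phi|_{\Gamma}=0$) and on $\partial\Omega\setminus\Gamma$ (from the common co-normal condition on $u$ and $h$). Because $v|_{\Gamma}=0$, any odd composition $\Phi(v)$ with $\Phi(0)=0$ is an admissible test function.

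Following the Brezis--Merle style strategy of \cite[Lemma 2.2]{CL2} (see also \cite{BF,AP,E,WX2,XG}), the exponential estimate is reduced to a scale of $L^{q}$-bounds of the form $\|v\|_{L^{q}(\Omega)}\leq C(q)\,\|f\|_{L^{1}(\Omega)}^{1/(N-1)}$ with $C(q)$ growing at most polynomially in $q$; the conclusion then follows by summing the Taylor expansion $e^{\lambda|v|}=\sum_{k\geq 0}\lambda^{k}|v|^{k}/k!$ and comparing with a geometric series. To obtain these $L^{q}$-bounds, I would test against truncated odd powers of $v$ such as $\phi_{k,q}(v):=\operatorname{sgn}(v)\min(|v|,k)^{q-1}$ (admissible thanks to $v|_{\Gamma}=0$), and use the $p=N\geq 2$ strong monotonicity $\langle a(\xi_{1})-a(\xi_{2}),\xi_{1}-\xi_{2}\rangle\geq c|\xi_{1}-\xi_{2}|^{N}$ (a consequence of Lemma \ref{8,le:2.3}) to extract an $L^{N}$-norm of the gradient of a suitable power of $v$. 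Whereas the Dirichlet proof closes the loop via the embedding $W^{1,N}_{0}(\Omega)\hookrightarrow L^{q}(\Omega)$, our setting only has $v$ vanishing on $\Gamma\supseteq\Gamma^{+}_{P}$; here the radial Poincar\'{e} type inequality (Lemma \ref{A1})
\begin{equation*}
\|v\|_{L^{p}(\Omega)}\leq d_{r,P}(\Omega)\,\|\nabla v\|_{L^{p}(\Omega)},\qquad 1\leq p<\infty,
\end{equation*}
combined with the standard Sobolev embedding $W^{1,p}(\Omega)\hookrightarrow L^{Np/(N-p)}(\Omega)$ on the bounded Lipschitz set $\Omega$, furnishes the required Gagliardo--Nirenberg type control at the cost of a factor $\bigl(1+d_{r,P}(\Omega)\bigr)$ in the constant.

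The main technical obstacle is to carefully track how the radial width $d_{r,P}(\Omega)$ propagates through the iteration, so that the universal Brezis--Merle constant $\mu$ in Lemma \ref{le:1} (depending only on $N$ and $H$) is replaced by a \emph{smaller} constant $\mu_{\Omega}$ depending additionally on $\Omega$ and on the radial center $P$. A natural explicit form one expects is
\begin{equation*}
\mu_{\Omega}:=\mu\,\bigl(1+d_{r,P}(\Omega)\bigr)^{-1},
\end{equation*}
or more generally an expression of the shape $\mu_{\Omega}=c_{N,H}\bigl(1+d_{r,P}(\Omega)\bigr)^{-\alpha}$ for a suitable exponent $\alpha>0$ inherited from the iteration constants. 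A secondary but routine point is to verify at each step that the truncated nonlinear test functions continue to vanish on $\Gamma$, which is automatic once $v|_{\Gamma}=0$. With these modifications in place, the rest of the argument is a line-by-line transcription of the Dirichlet proof of Lemma \ref{le:1}, yielding the stated exponential estimate under the constraint $0<\lambda<\mu_{\Omega}\|f\|_{L^{1}(\Omega)}^{-1/(N-1)}$.
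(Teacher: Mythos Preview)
Your high-level strategy---replace the $W^{1,p}_0$ Sobolev embedding in the Dirichlet proof by the radial Poincar\'e inequality (Lemma \ref{A1}) applied to $v=u-h$, which vanishes on $\Gamma\supseteq\Gamma^{+}_{P}$---is exactly what the paper does. The gap is in your specific execution. You propose Moser-type test functions $\phi_{k,q}(v)=\operatorname{sgn}(v)\min(|v|,k)^{q-1}$ and an $L^{q}$-iteration, but with $f$ only in $L^{1}$ this yields at best
\[
c\,(q-1)\int_{\{|v|<k\}}|\nabla v|^{N}|v|^{q-2}\,\mathrm{d}x\leq k^{q-1}\|f\|_{L^{1}},
\]
and the right side diverges as $k\to\infty$; no $k$-independent $L^{q}$ bound falls out, so the Taylor-series summation cannot be closed this way.

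The paper instead follows the Stampacchia/Brezis--Merle route: test with $T_{k+a}(v)-T_{k}(v)$ (which vanishes on $\Gamma$ since $v$ does) to obtain the level-set estimate $\frac{1}{a}\int_{\{k<|v|\leq k+a\}}|\nabla v|^{N}\leq c_{1}^{-1}\|f\|_{L^{1}}$, then apply the radial Poincar\'e inequality with $p=1$ together with $W^{1,1}(\Omega)\hookrightarrow L^{N/(N-1)}(\Omega)$ to $T_{k+a}(v)-T_{k}(v)$, bounding the distribution function $\Phi(k)=\mathcal{L}^{N}(\{|v|>k\})$. This gives $\Phi(k)\leq -\Lambda^{-1}\Phi'(k)$, hence $\Phi(k)\leq\mathcal{L}^{N}(\Omega)e^{-\Lambda k}$, and the exponential bound follows by layer-cake. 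The resulting constant is
\[
\mu_{\Omega}=\Bigl(\tfrac{c_{1}^{1/N}}{S(\Omega)\,(1+d_{r,P}(\Omega))}\Bigr)^{N/(N-1)},
\]
so in your notation $\alpha=\tfrac{N}{N-1}$, with an additional dependence on the Sobolev constant $S(\Omega)$ of the embedding $W^{1,1}(\Omega)\hookrightarrow L^{N/(N-1)}(\Omega)$, not only on $d_{r,P}(\Omega)$.
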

\begin{proof}
Fix $k\geq0$, $a>0$, define the truncation operator $T_k$ as
    \begin{equation*}
        T_k(u)=
        \left\{
            \begin{aligned}
                &u,\ \ &\text{if}\ |u|\leq k,\\
                &k\frac{u}{|u|},\ \ &\text{if}\ |u|\geq k.
                \end{aligned}
                \right.
    \end{equation*}
Since $u=h$ on $\Gamma$, then $T_{k+a}(u-h)=T_k(u-h)=0$ on $\Gamma$. Note that $|T_k(w)|\leq k$ holds for any $w\in W^{1,N}(\Omega)$, then one has
$$T_{k+a}(u-h)-T_{k}(u-h)\in W^{1,N}(\Omega)\cap L^{\infty}(\Omega).$$
Choosing $T_{k+a}(u-h)-T_{k}(u-h)$ as a test function in
\begin{equation*}
    \left\{
        \begin{aligned}
            &-\Delta ^{H}_{N}u=f, \quad \,\, -\Delta ^{H}_{N}h=0 \ &\rm{in}\,\, \Omega, \\
            &u=h \ &\rm{on}\,\, \Gamma,\\
            &\langle a(\nabla u),\nu\rangle=\langle a(\nabla h),\nu\rangle=0\quad\,\, &\rm{on}\,\, \partial\Omega\setminus\Gamma,
            \end{aligned}
            \right.
   \end{equation*}
we have
\begin{equation*}
    \begin{aligned}
        &\int_{\Omega}\langle a(\nabla u), \nabla[T_{k+a}(u-h)-T_{k}(u-h)]\rangle\mathrm{d}x+\int_{\Gamma}\langle a(\nabla u), \nu\rangle[T_{k+a}(u-h)-T_{k}(u-h)]\mathrm{d}\mathcal{H}^{N-1}\\
        &+\int_{\partial\Omega\setminus\Gamma}\langle a(\nabla u), \nu\rangle[T_{k+a}(u-h)-T_{k}(u-h)]\mathrm{d}\mathcal{H}^{N-1}=\int_{\Omega}f[T_{k+a}(u-h)-T_{k}(u-h)]\mathrm{d}x,\\
    \end{aligned}
\end{equation*}
and
\begin{equation*}
    \begin{aligned}
        &\int_{\Omega}\langle a(\nabla h), \nabla[T_{k+a}(u-h)-T_{k}(u-h)]\rangle\mathrm{d}x+\int_{\Gamma}\langle a(\nabla h), \nu\rangle[T_{k+a}(u-h)-T_{k}(u-h)]\mathrm{d}\mathcal{H}^{N-1}\\
        &+\int_{\partial\Omega\setminus\Gamma}\langle a(\nabla h), \nu\rangle[T_{k+a}(u-h)-T_{k}(u-h)]\mathrm{d}\mathcal{H}^{N-1}=0.\\
    \end{aligned}
\end{equation*}
By the boundary conditions $u=h$ on $\Gamma$ and $\langle a(\nabla u),\nu\rangle=\langle a(\nabla h),\nu\rangle=0$ on $\partial\Omega\setminus\Gamma$, we have
\begin{equation*}
\int_{\Omega}\langle a(\nabla u), \nabla[T_{k+a}(u-h)-T_{k}(u-h)]\rangle\mathrm{d}x=\int_{\Omega}f[T_{k+a}(u-h)-T_{k}(u-h)]\mathrm{d}x
\end{equation*}
and
\begin{equation*}
    \int_{\Omega}\langle a(\nabla h), \nabla[T_{k+a}(u-h)-T_{k}(u-h)]\rangle\mathrm{d}x=0.
    \end{equation*}
Thus we obtain that
    \begin{equation}\label{eq:2.8+}
        \int_{\Omega}\langle a(\nabla u)-a(\nabla h), \nabla[T_{k+a}(u-h)-T_{k}(u-h)]\rangle\mathrm{d}x=\int_{\Omega}f[T_{k+a}(u-h)-T_{k}(u-h)]\mathrm{d}x.
    \end{equation}
Moreover, for any $w\in W^{1,N}(\Omega)$, by definition of the truncation operator $T_{k}$, we have
\begin{equation*}
    T_{k+a}(w)-T_k(w)=
    \left\{
    \begin{aligned}
            & 0, \ \ & \, |w|\leq k, \\
            & w-k\frac{w}{|w|}, \ \ & \, k<|w|\leq k+a, \\
            & a\frac{w}{|w|}, \ \ & \, |w|>k+a,
    \end{aligned}
    \right.
\end{equation*}
and hence, it follows that
\begin{equation*}
    \nabla [T_{k+a}(w)-T_k(w)]=
    \left\{
    \begin{aligned}
            &\nabla w, \ \ &k<|w|\leq k+a,\\
            &0,\ \ &|w|\leq k\ \text{or}\ |w|>k+a,
    \end{aligned}
    \right.
\end{equation*}
and
$$|T_{k+a}(w)-T_k(w)|\leq a.$$
Letting $w=u-h$, $\xi_1=\nabla u$ and $\xi_2=\nabla h$ in \eqref{aeq:2.4}, we deduce from \eqref{eq:2.8+} that
\begin{equation}\label{eq:2.9+}
    \frac{1}{a}\int_{\{k<|u-h|\leq k+a\}}|\nabla(u-h)|^N\mathrm{d}x\leq\frac{\lVert f\rVert_{L^1(\Omega)}}{c_1}.
\end{equation}
Let $\Phi(k):=\mathcal{L}^N(\{x\in\Omega\mid\,|w(x)|>k\})$ be the distribution function of $w$, where $w=u-h$. Since $\mathcal{C}$ is a cone with vertex at the origin $0$ and $w=u-h=0$ on $\Gamma$, by applying the radial Poincar\'{e} type inequality in Lemma \ref{A1} with $p=1$, we deduce that
\begin{equation*}
    \begin{aligned}
        \Phi(k+a)^{\frac{N-1}{N}}
        &=\frac{1}{a}\left(\int_{\{|w|>k+a\}}\left\lvert (k+a)\frac{w}{|w|}-a\frac{w}{|w|}\right\rvert^{\frac{N}{N-1}}\mathrm{d}x \right)^{\frac{N-1}{N}}\\
        &=\frac{1}{a}\left(\int_{\{|w|>k+a\}}|T_{k+a}(w)-T_k(w)|^{\frac{N}{N-1}}\mathrm{d}x\right)^{\frac{N-1}{N}} \\
        &\leq \frac{1}{a}\left(\int_{\Omega}|T_{k+a}(w)-T_k(w)|^{\frac{N}{N-1}}\mathrm{d}x\right)^{\frac{N-1}{N}}\\
        &\leq\frac{S(\Omega)}{a} \left(\int_{\Omega}|T_{k+a}(w)-T_k(w)|\mathrm{d}x+\int_{\Omega}|\nabla [T_{k+a}(w)-T_k(w)]|\mathrm{d}x\right) \\
        &\leq\frac{S(\Omega)}{a}\left(d_{r,P}(\Omega)+1\right)\int_{\{k<|w|\leq k+a\}}|\nabla w|\mathrm{d}x,
    \end{aligned}
\end{equation*}
where $S(\Omega)$ is the Sobolev constant of the embedding $W^{1,1}(\Omega)\hookrightarrow L^{\frac{N}{N-1}}(\Omega)$, and $d_{r,P}(\Omega)$ defined by \eqref{rw} denotes the radial width of $\Omega$ with respect to the radial center $P$. By \eqref{eq:2.9+} and H\"{o}lder's inequality, we have
$$\Phi(k+a)\leq\frac{\Phi(k)-\Phi(k+a)}{a\Lambda},$$
where $\Lambda:=\left(\frac{c_1^{\frac{1}{N}}}{S(\Omega)(d_{r,P}(\Omega)+1)}\right)^{\frac{N}{N-1}}\left\lVert f\right\rVert^{-\frac{1}{N-1}}_{L^1(\Omega)}$. Letting $a\rightarrow0^+$, we have
\begin{equation}\label{eq:2.10+}
    \Phi(k)\leq-\frac{1}{\Lambda}\Phi^{\prime}(k)\quad\,\,\,\text{for}\,\,\,\text{a.e.}\,\,k>0.
\end{equation}
Since $\Phi$ is monotonically decreasing, by integrating \eqref{eq:2.10+} from $0$ to $k$, we get
$$\log\left[\frac{\Phi(k)}{\Phi(0)}\right]\leq\int_{0}^{k}\frac{\Phi^{\prime}(s)}{\Phi(s)}\mathrm{d}s\leq-\Lambda k,$$
which implies that
$$\Phi(k)\leq \mathcal{L}^N(\Omega)e^{-\Lambda k}\qquad \ \text{for all}\ \ k>0.$$
Then, we have
\begin{equation*}
    \begin{aligned}
        \int_{\Omega}e^{\lambda|w|}\mathrm{d}x&=\mathcal{L}^N(\Omega)+\lambda\int_{\Omega}\mathrm{d}x\int_{0}^{|w(x)|}e^{\lambda k}\mathrm{d}k=\mathcal{L}^N(\Omega)+\lambda\int_{0}^{\infty}e^{\lambda k}\Phi(k)\mathrm{d}k\\
        &\leq \mathcal{L}^N(\Omega)+\lambda\mathcal{L}^N(\Omega)\int_{0}^{\infty}e^{-(\Lambda-\lambda) k}\mathrm{d}k=\frac{\mathcal{L}^N(\Omega)}{1-\lambda\Lambda^{-1}}.
    \end{aligned}
\end{equation*}
Recall that $w=u-h$ and $\Lambda=\left(\frac{c_1^{\frac{1}{N}}}{S(\Omega)\left(d_{r,P}(\Omega)+1\right)}\right)^{\frac{N}{N-1}}\left\lVert f\right\rVert^{-\frac{1}{N-1}}_{L^1(\Omega)}$. Letting
\begin{equation}\label{ce}
  \mu_E:=\left(\frac{c_1^{\frac{1}{N}}}{S(\Omega)\left(d_{r,P}(\Omega)+1\right)}\right)^{\frac{N}{N-1}},
\end{equation}
we finished our proof of Lemma \ref{le:1+a}.
\end{proof}

\begin{rem}\label{rem-a}
By letting $\Omega=E\cap\mathcal{C}$ (where $\mathcal{C}$ is an arbitrary open cone with vertex at the origin $0$ and $E$ is a bounded domain), $P=0$ and $\Gamma=\partial E\cap\mathcal{C}=\Gamma_{0}$ in Lemma \ref{le:1+a}, we can deduce the Brezis-Merle type exponential inequality for mixed boundary value problems of Finsler $N$-Laplacian in general cones $\mathcal{C}$ from Lemma \ref{le:1+a} directly. By letting $\Gamma=\Gamma^{+}_{P}$ in Lemma \ref{le:1+a}, we get the Brezis-Merle type exponential inequality for mixed boundary value problem which only requires $u=h$ on the (back) radial contact set $\Gamma^{+}_{P}$.
\end{rem}

Besides the important application in the proof of Lemma \ref{le:1+a}, the radial Poincar\'{e} type inequality in Lemma \ref{A1} also yields an interesting observation that, the Poincar\'{e} inequality holds in $B_{1}(0)$ with constant $1$ if $f=0$ on the whole unit sphere $\mathbb{S}^{N-1}$, and holds in $B_{1}(0)$ with constant $2$ if $f=0$ on ``almost half" of the unit sphere $\mathbb{S}^{N-1}$.
\begin{cor}\label{cor}
Let $N\geq 2$ and $1\leq p<+\infty$. Then we have $\|f\|_{L^{p}(B_{1}(0))}\leq \|\nabla f\|_{L^{p}(B_{1}(0))}$ for any $f\in W^{1,p}_{0}(B_{1}(0))$. For arbitrarily small $0<\epsilon<1$, we have $\|f\|_{L^{p}(B_{1}(0))}\leq 2\|\nabla f\|_{L^{p}(B_{1}(0))}$ for any $f\in W^{1,p}(B_{1}(0))$ with $f=0$ on $\partial B_{1}(0)\cap \{x\mid x_{N}>-\epsilon\}$.
\end{cor}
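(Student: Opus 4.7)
The plan is to deduce both inequalities from the radial Poincar\'e type inequality (Lemma \ref{A1}) by making clever choices of the radial center $P$, exploiting the fact that the lemma only requires vanishing on the back contact set $\Gamma^+_P$, together with the freedom to place $P$ inside or even outside the domain.

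\textbf{First inequality.} For $f\in W^{1,p}_0(B_1(0))$, I would take $P=0$. Every ray from the origin meets $B_1(0)$ along the interval $[0,1)\cdot\omega$, so property $(P)$ holds with $K_\omega=1$, $\underline{r}^1_\omega=0$, $\overline{r}^1_\omega=1$ (consistently with the $r=0$ case of Remark \ref{eg}), whence $\Gamma^+_0=\partial B_1(0)$ and $d_{r,0}(B_1(0))=1$. Since $f$ vanishes on all of $\partial B_1(0)\supset\Gamma^+_0$, Lemma \ref{A1} gives the constant $1$ at once.

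\textbf{Second inequality.} The idea is to push the radial center far below the ball, setting $P=-te_N$ with $t$ large (how large will depend on $\epsilon$). Geometrically, from such a vantage point every ray hitting $B_1(0)$ is nearly vertical, so the back silhouette sits near the top of $\partial B_1(0)$, while the maximum chord length (attained along the vertical direction) still equals the diameter $2$. Concretely, $|P+r\omega|^2<1$ reduces to $r^2-2rt\omega_N+(t^2-1)<0$, which admits positive roots precisely when $\omega_N\geq\sqrt{1-1/t^2}$, giving chord length $\overline{r}^1_\omega-\underline{r}^1_\omega=2\sqrt{t^2\omega_N^2-(t^2-1)}\leq 2$, with equality at $\omega=e_N$. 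Hence $d_{r,P}(B_1(0))=2$ uniformly in such $t$.

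\textbf{The main obstacle.} The only nontrivial point is verifying the inclusion $\Gamma^+_P\subset\partial B_1(0)\cap\{x_N>-\epsilon\}$, so that the hypothesis $f=0$ on the latter set transfers to $f=0$ on $\Gamma^+_P$. I would compute the $x_N$-coordinate of a back contact point $P+\overline{r}^1_\omega\omega$ and, substituting $s:=1-\omega_N^2\in[0,1/t^2]$, reduce it to $\varphi(s):=-ts+\sqrt{(1-s)(1-t^2s)}$. A direct differentiation shows $\varphi$ is strictly decreasing on $[0,1/t^2]$ with $\varphi(0)=1$ and $\varphi(1/t^2)=-1/t$, so taking $t>\max(1,1/\epsilon)$ forces every back contact point to satisfy $x_N\geq -1/t>-\epsilon$. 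Under this choice, Lemma \ref{A1} then delivers $\|f\|_{L^p(B_1(0))}\leq 2\|\nabla f\|_{L^p(B_1(0))}$, completing the argument.
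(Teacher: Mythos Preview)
Your proposal is correct and follows essentially the same route as the paper: both parts are direct applications of Lemma \ref{A1} with the radial center chosen as $P=0$ for the first inequality and $P=(0,\ldots,0,-M)$ (your $-te_N$) for the second. The paper simply asserts that $M$ can be taken large enough to force $\Gamma^+_P\subset\partial B_1(0)\cap\{x_N>-\epsilon\}$ and that $d_{r,P}(B_1(0))=2$, whereas you supply the explicit chord-length and back-contact computations (including the threshold $t>1/\epsilon$); this extra detail is welcome but the underlying argument is identical.
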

\begin{proof}
By choosing the radial center $P=0$, it follows immediately from the radial Poincar\'{e} type inequality in Lemma \ref{A1} that $\|f\|_{L^{p}(B_{1}(0))}\leq \|\nabla f\|_{L^{p}(B_{1}(0))}$ for any $f\in W^{1,p}_{0}(B_{1}(0))$. For arbitrarily small $0<\epsilon<1$, we choose the radial center $P=(0,\cdots,0,-M)$ with $M>0$ sufficiently large such that $\Gamma^{+}_{P}\subset\partial B_{1}(0)\cap \{x\mid x_{N}>-\epsilon\}$, then $d_{r,P}(B_{1}(0))=2$ and $f=0$ on $\Gamma^{+}_{P}$ provided that $f=0$ on $\partial B_{1}(0)\cap \{x\mid x_{N}>-\epsilon\}$. Therefore, the radial Poincar\'{e} type inequality in Lemma \ref{A1} implies that $\|f\|_{L^{p}(B_{1}(0))}\leq 2\|\nabla f\|_{L^{p}(B_{1}(0))}$ for any $f\in W^{1,p}(B_{1}(0))$ with $f=0$ on $\partial B_{1}(0)\cap \{x\mid x_{N}>-\epsilon\}$. This finishes the proof of Corollary \ref{cor}.
\end{proof}

We believe that the radial Poincar\'{e} type inequality in Lemma \ref{A1} can be applied to many problems in functional analysis, PDEs and geometric analysis and so on.

\end{document}